\newtheorem{Thm}{Theorem}
\newtheorem{Prop}[Thm]{Proposition}
\newtheorem{Def}[Thm]{Definition}
\newtheorem{Def/Thm}[Thm]{Definition/Theorem}
\newtheorem{Cor}[Thm]{Corollary}
\newtheorem{Lemma}[Thm]{Lemma}
\theoremstyle{remark}
\newcommand{\F}{{\mathsf{M}}}
\newcommand{\ti }{\times}
\newcommand{\ot }{\otimes}
\newcommand{\ra }{\rightarrow}
\newcommand{\lra }{\longrightarrow}
\newcommand{\Spec}{{\mathrm{Spec}}}
\newcommand{\lann}{\langle\langle}
\newcommand{\rann}{\rangle\rangle}
\newcommand{\lannn}{\left\langle\left\langle}
\newcommand{\rannn}{\right\rangle\right\rangle}
\newcommand{\G}{{\bf G}}
\newcommand{\PP }{{\mathbb P}}
\newcommand{\QQ }{{\mathbb Q}}
\newcommand{\CC }{{\mathbb C}}
\newcommand{\ZZ }{{\mathbb Z}}
\newcommand{\vir}{\mathrm{vir}}
\newcommand{\DD}{\mathsf{D}}
\newcommand{\T}{{\mathsf{T}}}
\newcommand{\lan}{\langle}
\newcommand{\ran}{\rangle}
\newcommand{\dsI}{\mathds{I}}
\newcommand{\pP}{\mathsf{P}}
\newcommand{\ppl}{{\mathsf{P}}\left[}
\newcommand{\ppr}{\right]}
\newcommand{\rarr}{\longrightarrow}
\def \proj {{\mathbb{P}}}
\newcommand{\com}{{\mathbb{C}}}
\newcommand{\oh}{{\mathcal O}}
\begin{document}

\title[Stable quotients and the holomorphic anomaly equation]
{Stable quotients and the holomorphic anomaly equation}

\author{Hyenho Lho}
\address{Department of Mathematics, ETH Z\"urich}
\email {hyenho.lho@math.ethz.ch}
\author{Rahul Pandharipande}
\address{Department of Mathematics, ETH Z\"urich}
\email {rahul@math.ethz.ch}
\date{March 2018}.

\begin{abstract} 
We study the fundamental relationship between stable quotient
invariants and the B-model for local $\PP^2$
in all genera. Our main result is a direct geometric
proof of the holomorphic anomaly equation in the precise form predicted by B-model physics. The method
yields new holomorphic anomaly equations
for an infinite class of twisted
theories on projective spaces.

An example of such a
twisted theory is
the formal quintic  defined by a
hyperplane section 
of $\mathbb{P}^4$ in all genera via the Euler class of
a complex.
The formal quintic theory is found to satisfy the holomorphic anomaly equations conjectured for the true
quintic theory. Therefore,
the formal quintic theory  and the true quintic theory
should be related by transformations which respect the
holomorphic anomaly equations.

 \end{abstract}

\maketitle

\setcounter{tocdepth}{1} 
\tableofcontents

\setcounter{section}{-1}

\section{Introduction}

\subsection{GW/SQ} 
To the A-model Gromov-Witten theory of a Calabi-Yau 3-fold $X$ is 
conjecturally associated
the B-model theory of a mirror Calabi-Yau 3-fold ${Y}$.
At the genus 0 level, much is known about the geometry underlying the
B-model: Hodge theory, period integrals, and the linear sigma model.
In higher genus, mathematical techniques have been
less successful. In toric Calabi-Yau geometries,
the topological
recursion
of Eynard and Orantin provides
a link to
 the B-model \cite{BKMP,EMO}.
Another mathematical approach to
the B-model (without the
toric hypothesis) 
has been proposed by Costello and Li \cite{CosLi} 
following paths suggested by string theory. A very different mathematical view of the geometry of B-model
invariants is pursued here.

Let 
$X_5\subset \mathbb{P}^4$
be a (nonsingular) quintic Calabi-Yau 3-fold.
The moduli space
of stable maps to the quintic
of genus $g$ and degree $d$,
$$\overline{M}_g(X_5,d)\subset \overline{M}_g(\mathbb{P}^4,d)\, ,$$
has virtual dimension 0.
The Gromov-Witten invariants{\footnote{In degree 0, the moduli 
spaces of maps and stable quotients are both empty for genus 0 and 1,
so the invariants in these cases vanish.}},
\begin{equation}\label{fredfredgw}
N^{\mathsf{GW}}_{g,d}\, =\,
\langle 1\rangle_{g,d}^{\mathsf{GW}}\,=\, \int_{[\overline{M}_g(X_5,d)]^{vir}} 1\, ,
\end{equation}
have been studied for more than 20 years, 
see \cite{CKatz,FP,Kon} for an
introduction to the subject.

The theory of stable quotients developed in \cite{MOP}
was partially inspired by the question of finding a geometric approach
to a higher genus linear sigma model. 
The moduli space
of stable quotients for the quintic,
$$\overline{Q}_g(X_5,d)\subset \overline{Q}_g(\mathbb{P}^4,d)\, ,$$
was defined in \cite[Section 9]{MOP}, and questions about the associated
integral theory,
\begin{equation}
\label{fredfred}
N^{\mathsf{SQ}}_{g,d}
\,=\, \langle 1 \rangle_{g,d}^{\mathsf{SQ}}
\, =\, \int_{
[\overline{Q}_g(X_5,d)]^{vir}} 1\, ,
\end{equation}
were posed.

The existence of a natural obstruction theory on
$\overline{Q}_g(X_5,d)$
and a virtual
fundamental class ${[\overline{Q}_g(X_5,d)]^{vir}}$ is easily seen{\footnote{For stability,
marked points are required in genus 0 and
positive degree is required in genus 1.}}
in genus 0 and 1.
A proposal in higher genus for the obstruction theory and
virtual class was made in \cite{MOP} and was carried out in
significantly greater generality in the setting of quasimaps in
\cite{CKM}.
In genus 0 and 1, the integral theory \eqref{fredfred}
 was calculated 
in \cite{CZ} and \cite{KL} respectively.
The answers on the stable quotient side {\em exactly}
match the string theoretic B-model for the quintic in
genus 0 and 1.

A relationship in every genus between the
Gromov-Witten and stable quotient invariants of
the quintic has been recently proven by Ciocan-Fontanine
and Kim \cite{CKw}.{\footnote{A second proof (in most cases) can
be found in  \cite{CJR}.}}
Let $H\in H^2(X_5,\mathbb{Z})$ be the hyperplane class of
the quintic,
and let
$$\mathcal{F}_{g,n}^{\mathsf{GW}}(Q)\, =\, \langle \, \underbrace{H,\ldots,H}_{n}  \, \rangle_{g,n}^{\mathsf{GW}} \, =\, 
\sum_{d=0}^\infty  Q^d
\int_{[\overline{M}_{g,n}(X_5,d)]^{vir}} \prod_{i=1}^n
\text{ev}_i^*(H)\,  ,$$
$$\mathcal{F}_{g,n}^{\mathsf{SQ}}(q) \, =\, \langle \, \underbrace{H,\ldots,H}_{n}  \, \rangle_{g,n}^{\mathsf{SQ}} \, =\, 
\sum_{d=0}^\infty  q^d
\int_{[\overline{Q}_{g,n}(X_5,d)]^{vir}} \prod_{i=1}^n
\text{ev}_i^*(H)\, $$
be the Gromov-Witten and stable quotient series
respectively (involving the pointed moduli spaces and the
evaluation morphisms at the markings).
Let 
$$I^{\mathsf{Q}}_0(q)=\sum_{d=0}^\infty q^d \frac{(5d)!}{(d!)^5}\, ,  \ \ \ 
I^{\mathsf{Q}}_1(q)= \log(q)I^{\mathsf{Q}}_0(q) + 5 \sum_{d=1}^\infty q^d \frac{(5d)!}{(d!)^5} 
\left( \sum_{r=d+1}^{5d} \frac{1}{r}\right)\, .
$$
The mirror map is defined by
$$Q(q) = \exp\left(\frac{I^{\mathsf{Q}}_1(q)}{I^{\mathsf{Q}}_0(q)}\right) =
q\cdot \exp\left( \frac{5 \sum_{d=1}^\infty q^d \frac{(5d)!}{(d!)^5} 
\left( \sum_{r=d+1}^{5d} \frac{1}{r}\right)}{\sum_{d=0}^\infty q^d \frac{(5d)!}{(d!)^5}} \right)
\, .$$
The relationship  between the Gromov-Witten and stable quotient
invariants of the quintic in case $$2g-2+n>0$$ is given by the following result
\cite{CKw}:
\begin{equation}\label{345}
\mathcal{F}_{g,n}^{\mathsf{GW}}(Q(q)) =
I^{\mathsf{Q}}_0(q)^{2g-2+n} \cdot \mathcal{F}_{g,n}^{\mathsf{SQ}}(q)\, .
\end{equation}
The transformation \eqref{345} shows the stable quotient
theory matches the string theoretic
B-model series for the quintic $X_5$.

Based on the above investigation of the quintic, 
a fundamental relationship between
the stable quotient invariants and the B-model for all
local and complete intersection Calabi-Yau 3-folds $X$
is natural to propose: 
{\it the stable quotient invariants of 
$X$ exactly  
equal the B-model invariants of the mirror $Y$}.

\subsection{Holomorphic anomaly}\label{ham}
The (conjectural) holomorphic anomaly equation is a beautiful property
of the string theoretic B-model series which has been used
effectively since \cite{BCOV}. Since the stable quotients
invariants provide a geometric proposal for the B-model series,
we should look for the geometry of the holomorphic
anomaly equation in the moduli space of stable quotients.

Since the stable quotients invariant for the quintic are
difficult{\footnote{There
are very few mathematical derivations of the
holomorphic anomaly equation in
higher genus for compact Calabi-Yau 3-folds. A genus 2 result
for the Enriques Calabi-Yau
can be found in \cite{MPE}.} to approach in $g\geq 2$, 
we study here instead the stable quotient invariants in all genera
for local $\PP^2$ -- the toric
Calabi-Yau 3-fold given by the
total space $K\PP^2$ of
the canonical bundle over
$\PP^2$ . Our main result is a direct proof of the
holomorphic anomaly equations in the precise form predicted by 
B-model physics for local $\PP^2$.

The holomorphic anomaly
equation can be proven
for toric Calabi-Yau
3-folds by the 
topological
recursion
of Eynard and Orantin \cite{EMO}
together with the
remodelling conjecture
 \cite{BKMP} proven 
 recently in \cite{FLZ}.
 The path of \cite{BKMP,EMO,FLZ} to
 the holomorphic anomaly equation
 for toric Calabi-Yau 3-folds
 involves explicit manipulation of 
 the Gromov-Witten
 partition function
 (which can be computed
 by now in several different
 ways in toric Calabi-Yau cases \cite{TopVer,gwdt}). 
 Our approach involves
 the geometry of the moduli space of 
 stable quotients ---
 the derivation of the
 holomorphic anomaly equation
 takes place on the stable quotient side. 
 
The new perspective 
yields new holomorphic anomaly equations
for an infinite class of twisted
theories on projective spaces. Some
of these are
related to toric Calabi-Yau geometries
(such as local $\PP^1 \times \PP^1$),
but most are not. The holomorphic
anomaly equations for these
theories have never been considered
before.

 A particular twisted theory
 on $\PP^4$ is
 related to the quintic
 3-fold. Let the algebraic torus
$$\mathsf{T}_5=(\com^*)^5$$
 act with the standard linearization
 on $\PP^4$ with weights
 $\lambda_0,\ldots,\lambda_4$
 on the vector space  $H^0(\PP^4,\mathcal{O}_{\PP^4}(1))$.
Let
\begin{equation}\label{gred}
\mathsf{C} \rightarrow \overline{M}_g(\mathbb{P}^4,d)
\, , \ \ \ 
f:\mathsf{C}\rightarrow \PP^4
\, , \ \ \ 
\mathsf{S}=f^*\mathcal{O}_{\PP^4}(-1) \rightarrow \mathsf{C}\, 
\end{equation}
be the universal curve,
the universal map,
and the universal bundle over
the moduli space of stable maps
--- all
equipped with canonical $\T_5$-actions.
We define the {\em formal quintic invariants} by{\footnote{The negative
exponent denotes the dual:
$\mathsf{S}$ is a line bundle and $\mathsf{S}^{-5}=(\mathsf{S}^\star)^{\otimes 5}$.}}
\begin{equation}\label{fredfredfred}
\widetilde{N}_{g,d}^{\mathsf{GW}}= \int_{[\overline{M}_g(\mathbb{P}^4,d)]^{vir}} e(R\pi_*(\mathsf{S}^{-5}))\, , \   
\end{equation}
where $e(R\pi_*(\mathsf{S}^{-5}))$
is the equivariant Euler class (defined
after localization).
The integral \eqref{fredfredfred} is
homogeneous of degree 0 in localized
equivariant cohomology,
$$\int_{[\overline{M}_g(\mathbb{P}^4,d)]^{vir}} e(R\pi_*(\mathsf{S}^{- 5}))
\, \in \,{\mathbb{Q}}(\lambda_0,\ldots,\lambda_4)\, ,$$
and defines a rational number 
 $\widetilde{N}_{g,d}^{\mathsf{GW}}\in {\mathbb{Q}}$
after the
specialization
$$\lambda_i = \zeta^i \lambda_0$$
for a primitive fifth root of unity
$\zeta^5=1$.

 
A main result in the sequel \cite{LP2} proves  that the   holomorphic anomaly equations
conjectured for the
true quintic theory \eqref{fredfredgw}
are satisfied by the formal 
quintic theory
\eqref{fredfredfred}.
%
In particular,
the formal quintic theory  and the true quintic theory
should be related by transformations which respect the
holomorphic anomaly equations.

\subsection{Twisted theories on
$\PP^m$}\label{twth}
Twisted theories
associated to $\proj^m$
can be constructed as follows. 
Let the algebraic torus
$$\mathsf{T}_{m+1}=(\com^*)^{m+1}$$
 act with the standard linearization 
 on $\PP^m$ with weights
 $\lambda_0,\ldots,\lambda_m$
 on the vector space  $H^0(\PP^m,\mathcal{O}_{\PP^m}(1))$.

Let 
$
\overline{M}_{g}(\proj^m,d)
$ 
be the moduli space of 
stable maps to $\proj^m$ equipped with the canonical
$\T_{m+1}$-action, and 
let
$$
\mathsf{C} \rightarrow \overline{M}_g(\mathbb{P}^m,d)
\, , \ \ \ 
f:\mathsf{C}\rightarrow \PP^m
\, , \ \ \ 
\mathsf{S}=f^*\mathcal{O}_{\PP^m}(-1) \rightarrow \mathsf{C}\,
$$
be the standard universal structures. 
Let 
$$\mathsf{a}=(a_1,\ldots, a_r)\, ,\ \ \ \ \mathsf{b}=(b_1,\ldots,b_s)$$
be vectors of  {\em positive} integers satisfying
the conditions
$$\sum_{i=1}^r a_i - \sum_{j=1}^s b_j = m+1 \ \ \ \text{and} \ \ \ m-r+s=3\, .$$
The first is the Calabi-Yau condition
and the second is the dimension 3 condition.

The Gromov-Witten invariants of 
the {\em $(\mathsf{a},\mathsf{b})$-twisted 
geometry of $\PP^m$} 
are defined via the equivariant integrals
\begin{equation}\label{twt}
\widetilde{N}_{g,d}^{\mathsf{GW}} = \int_{[\overline{M}_g(\proj^m,d)]^{vir}}
\prod_{i=1}^r e(R\pi_*\mathsf{S}^{-a_i})
\prod_{j=1}^s e(-R\pi_*\mathsf{S}^{b_j})
\, .
\end{equation}
The integral \eqref{twt} is
homogeneous of degree 0 in localized
equivariant cohomology
and defines a rational number 
 $$\widetilde{N}_{g,d}^{\mathsf{GW}}\in {\mathbb{Q}}$$
after the
specialization
$$\lambda_i = \zeta^i \lambda_0$$
for a primitive $(m+1)$-root of unity
$\zeta^{m+1}=1$.

The standard
theory of local $\PP^2$ theory is recovered
in case 
$$m=2\, , \ \ r=0\,, \ \  \mathsf{b}=(3)\, .$$
We will see in \cite{LP3} that the case
$$m=3\,, \ \ \mathsf{a}=(2)\,, \ \ \mathsf{b}=(2)$$
is related to the local $\PP^1 \times 
\PP^1$ geometry. The formal quintic
theory arises in case
$$m=4\,, \ \ \mathsf{a}=(5)\,, \ \ s=0\,.$$


The stable quotient
perspective of the paper yields holomorphic anomaly equations for all
$(\mathsf{a},\mathsf{b})$-twisted
theories on $\PP^m$
satisfying the Calabi-Yau and
dimension 3 conditions. 
Our goal here
is to present two of the most
interesting cases.



\subsection{Holomorphic anomaly for $K\PP^2$} \label{holp2}

We state here the precise form of the holomorphic
anomaly equations for local $\proj^2$.

Let $H\in H^2(K\PP^2,\mathbb{Z})$ be the hyperplane class of
the obtained from $\PP^2$,
and let
$$\mathcal{F}_{g,n}^{\mathsf{GW}}(Q)\, =\, \langle \, \underbrace{H,\ldots,H}_{n}  \, \rangle_{g,n}^{\mathsf{GW}} \, =\, 
\sum_{d=0}^\infty  Q^d
\int_{[\overline{M}_{g,n}(K\PP^2,d)]^{vir}} \prod_{i=1}^n
\text{ev}_i^*(H)\,  ,$$
$$\mathcal{F}_{g,n}^{\mathsf{SQ}}(q) \, =\, \langle \, \underbrace{H,\ldots,H}_{n}  \, \rangle_{g,n}^{\mathsf{SQ}} \, =\, 
\sum_{d=0}^\infty  q^d
\int_{[\overline{Q}_{g,n}(K\PP^2,d)]^{vir}} \prod_{i=1}^n
\text{ev}_i^*(H)\, $$
be the Gromov-Witten and stable quotient series
respectively (involving the
evaluation morphisms at the markings).
The relationship  between the Gromov-Witten and stable quotient
invariants of $K\PP^2$ in case $2g-2+n>0$ is proven in 
\cite{CKg}:
\begin{equation}\label{3456}
\mathcal{F}_{g,n}^{\mathsf{GW}}(Q(q)) =
 \mathcal{F}_{g,n}^{\mathsf{SQ}}(q),
\end{equation}
where $Q(q)$ is the mirror map,
$$
I^{K\proj^2}_1(q)=
\log(q) + 3 \sum_{d=1}^\infty (-q)^d \frac{(3d-1)!}{(d!)^3} \, ,$$
$$Q(q)=\exp\left(I^{K\PP^2}_1(q)\right) =
q\cdot \exp\left(3 \sum_{d=1}^\infty (-q)^d \frac{(3d-1)!}{(d!)^3}\right) \, .$$
Again, the stable quotient
theory matches the string theoretic
B-model series for $K\PP^2$.

In order to state the holomorphic anomaly equations,
we require the following additional series in $q$.
\begin{eqnarray*}
L(q)&=& (1+27 q)^{-\frac{1}{3}}\, = \, 1-9q+162 q^2 + \ldots\, ,\\
C_1(q)&=& q \frac{d}{dq} I_1^{K\proj^2}\, ,\\
A_2(q)&=& \frac{1}{L^3}\left(3\frac{q\frac{d}{dq}C_1}{C_1}
+1 -\frac{L^3}{2}\right)\, .
\end{eqnarray*}
Let $T$ be the standard coordinate mirror to $t=\log(q)$,
\begin{align}\label{MM}T= I_1^{K\proj^2}(q)\, .
\end{align}
Then $Q(q)=\exp(T)$ is the mirror map.

The ring $\mathbb{C}[L^{\pm1}]=\mathbb{C}[L,L^{-1}]$ will play basic role.
Since 
$$q = \frac{1}{27}\left(L^{-3}-1\right),$$
we have $\mathbb{C}[q] \subset \mathbb{C}[L^{\pm1}]$.
For the holomorphic anomaly equation, consider the
free polynomial rings in the variables $A_2$ and $C_1^{-1}$ over $\com[L^{\pm1}]$,
\begin{equation}
\label{dsdsds7}
\mathbb{C}[L^{\pm1}][A_2]\, , \ \ \ 
\mathbb{C}[L^{\pm1}][A_2,C_1^{-1}]\,.
\end{equation}
There are canonical maps
\begin{equation}\label{dsdsds22}
\mathbb{C}[L^{\pm1}][A_2]\rightarrow \mathbb{C}[[q]]\, , \ \ \ 
\mathbb{C}[L^{\pm1}][A_2,C_1^{-1}]\rightarrow \mathbb{C}[[q]]
\end{equation}
given by assigning the above defined series $A_2(q)$ and
$C_1^{-1}(q)$ to the variables $A_2$ and $C_1^{-1}$ respectively.
We may therefore consider elements of the rings \eqref{dsdsds7} {\em either} 
as free polynomials in the variables $A_2$ and $C_{1}^{-1}$ {\em or}
as series in $q$.

Let $F(q)\in \mathbb{Q}[[q]]$ be a series in $q$. When we write
$$F(q) \in \mathbb{C}[L^{\pm1}][A_2]\, ,$$
we mean there is a canonical lift ${F}\in \mathbb{C}[L^{\pm1}][A_2]$
for which
$${F} \mapsto F(q) \in \mathbb{C}[[q]]$$
under the map \eqref{dsdsds22}. 
The symbol $F$ {\em without the
argument $q$} is the lift.
The notation 
$$F(q) \in \mathbb{C}[L^{\pm1}][A_2,C_1^{-1}]$$
is parallel.


\begin{Thm} \label{ooo} For the stable quotient invariants of $K\PP^2$,  
\begin{enumerate}
\item[(i)]
$\mathcal{F}_g^{\mathsf{SQ}} (q) \in \mathbb{C}[L^{\pm1}][A_2]$
for  $g\geq 2$, \vspace{5pt}
\item[(ii)]
$\mathcal{F}_g^{\mathsf{SQ}}$ is of degree at most $3g-3$  with respect to $A_2$,
\item[(iii)]
$\frac{\partial^k \mathcal{F}_g^{\mathsf{SQ}}}{\partial T^k}(q) \in \mathbb{C}[L^{\pm1}][A_2,C_1^{-1}]$ for  $g\geq 1$ and  $k\geq 1$,
\vspace{5pt}

\item[(iv)]
${\frac{\partial^k \mathcal{F}_g^{\mathsf{SQ}}}{\partial T^k}}$ is homogeneous of degree $k$
with respect to  $C_1^{-1}$.

\end{enumerate}
\end{Thm}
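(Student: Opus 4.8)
The plan is to establish all four statements simultaneously by induction on the genus $g$, using the structure of the localization formula on the moduli of stable quotients for $K\PP^2$ together with an analysis of how the relevant generating functions behave under the torus-fixed-point decomposition. The starting point is an explicit description of the stable quotient invariants $\mathcal{F}_g^{\mathsf{SQ}}(q)$ via the $I$-function and the associated Birkhoff factorization: after the mirror transformation, the higher-genus series are generated by a graph sum over the localization graphs, where vertices carry genus contributions and edges carry propagator-type contributions. The key algebraic fact to isolate first is that the basic building blocks of this graph sum --- the vertex contributions and the edge (propagator) contributions --- already lie in $\mathbb{C}[L^{\pm1}][A_2]$, and moreover that the propagator is \emph{linear} in $A_2$. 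This is where the precise definitions of $L$, $C_1$, and $A_2$ are forced: $A_2$ is essentially designed to be the coefficient capturing the non-polynomial (in $q$) part of the propagator, and the ring $\mathbb{C}[L^{\pm1}]$ is exactly the ring of functions that arise from the discriminant $(1+27q)$ and its powers.

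First I would set up the genus-reduction recursion. By degeneration/splitting of the virtual class (equivalently, by the structure of the localization graphs with a distinguished edge or a distinguished node), one writes $\mathcal{F}_g^{\mathsf{SQ}}$ in terms of lower-genus series $\mathcal{F}_{g'}^{\mathsf{SQ}}$ with $g' < g$ (with insertions), the propagator series, and derivatives $\partial_T \mathcal{F}_{g'}^{\mathsf{SQ}}$. The insertion of an $H$-class corresponds, under the mirror map $T = I_1^{K\PP^2}(q)$, to applying $\partial_T = \frac{1}{C_1}\, q\frac{d}{dq}$; this is the source of the $C_1^{-1}$ in (iii) and of the homogeneity statement (iv). So statements (iii) and (iv) should be proved together with (i) and (ii): once one knows $\mathcal{F}_g^{\mathsf{SQ}} \in \mathbb{C}[L^{\pm1}][A_2]$, one computes $\partial_T \mathcal{F}_g^{\mathsf{SQ}} = C_1^{-1}\, q\frac{d}{dq}\mathcal{F}_g^{\mathsf{SQ}}$, and the point is that $q\frac{d}{dq}$ maps $\mathbb{C}[L^{\pm1}][A_2]$ into $\mathbb{C}[L^{\pm1}][A_2]$. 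For this one needs the two derivation rules $q\frac{d}{dq}L = -9qL^4$ (immediate from $L = (1+27q)^{-1/3}$, so $q\frac{d}{dq}L \in \mathbb{C}[L^{\pm1}]$ since $q\in\mathbb{C}[L^{\pm1}]$) and a formula for $q\frac{d}{dq}A_2$ expressing it as a polynomial in $A_2$ with coefficients in $\mathbb{C}[L^{\pm1}]$ --- this latter identity is the analogue of the classical Ramanujan-type relations among the B-model generators and must be derived from the explicit definition of $A_2$ in terms of $C_1$ and $L$ together with the Picard--Fuchs equation satisfied by $C_1$ (equivalently, by $I_0, I_1$). Each application of $\partial_T$ then introduces exactly one factor of $C_1^{-1}$ and does not introduce any further $C_1^{-1}$'s (since $q\frac{d}{dq}C_1/C_1 \in \mathbb{C}[L^{\pm1}][A_2]$, again by the definition of $A_2$), which gives (iv) by a clean induction on $k$.

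For the degree bound (ii), I would track the $A_2$-degree through the graph sum: each propagator edge contributes degree exactly $1$ in $A_2$, each genus-$h$ vertex with $n$ incident edges contributes a series of bounded $A_2$-degree (degree $\le 3h - 3 + n$, matching the dimension of $\overline{M}_{h,n}$), and summing over a stable graph of genus $g$ the total degree is at most $3g-3$; one checks this is additive in the standard way ($\sum (3h_v - 3 + n_v) + (\#\text{edges}) = 3g - 3$ for a connected genus-$g$ graph). The genus-$0$ and genus-$1$ vertex contributions (the "initial data" of the recursion) have to be computed by hand: the genus-$0$ three-point-and-higher functions and the genus-$1$ one-point function, after the mirror map, must be shown to lie in the appropriate rings with the stated degrees; these are essentially the already-known genus $0$ and genus $1$ stable quotient computations for $K\PP^2$ referenced via \cite{CZ, KL, CKg}, reorganized in the variables $L, A_2, C_1$.

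The main obstacle, and the technical heart of the argument, is proving the two closure-under-differentiation identities --- that $q\frac{d}{dq}A_2$ is a polynomial in $A_2$ over $\mathbb{C}[L^{\pm1}]$ (expected to be quadratic, of the schematic shape $q\frac{d}{dq}A_2 = \alpha A_2^2 + \beta A_2 + \gamma$ with $\alpha,\beta,\gamma \in \mathbb{C}[L^{\pm1}]$) and that $q\frac{d}{dq}C_1/C_1 \in \mathbb{C}[L^{\pm1}][A_2]$ with $A_2$-degree one --- together with the compatibility of the localization graph sum with the mirror transformation in all genera. The first part is a finite but delicate computation with the Picard--Fuchs operator for $K\PP^2$; the second part requires that the full higher-genus stable quotient theory be expressible through exactly the graph sum whose vertices and edges I have analyzed, i.e. a higher-genus extension of the genus $0$/$1$ results, which is the real content and where the bulk of the paper's geometric work (the localization analysis on $\overline{Q}_{g,n}(K\PP^2,d)$ and the identification of the propagator with $A_2$) will be spent. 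Once those two inputs are in place, statements (i)--(iv) follow by the bookkeeping induction outlined above.
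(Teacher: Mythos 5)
Your core strategy coincides with the paper's: Theorem \ref{ooo} is proven there by the $\T$-fixed point graph sum of Proposition \ref{VE}, with the two key inputs being exactly the ones you isolate --- the vertex contributions lie in $\CC(\lambda_0,\lambda_1,\lambda_2)[L^{\pm 1}]$ and the edge contributions are linear in $X=\DD C_1/C_1$ (equivalently in $A_2$), see Lemmas \ref{L1} and \ref{L2} --- together with the closure of $\CC[L^{\pm1}][X]$ under $\DD$ coming from the quadratic relation \eqref{drule}, and the observation that $\partial_T=C_1^{-1}\DD$ produces exactly one factor of $C_1^{-1}$ per derivative, which gives (iii) and (iv). Your proposed derivation rules for $L$, $A_2$ and $\DD C_1/C_1$ are correct and are precisely what \eqref{drule} encodes.

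Two points, however, would not survive execution as written. First, the framing as an induction on genus via a ``genus-reduction recursion'' obtained by ``degeneration/splitting of the virtual class'' is not available: the stable quotient virtual class does not split along boundary strata so as to express $\mathcal{F}^{\mathsf{SQ}}_g$ through strictly lower genus series, and the closest true statement --- the holomorphic anomaly equation of Theorem \ref{ttt} --- only determines $\partial \mathcal{F}_g^{\mathsf{SQ}}/\partial A_2$, so it cannot anchor an induction establishing (i). Fortunately your actual arguments for (i)--(iv) do not really use this recursion: the paper's proof is a direct, non-inductive graph-sum argument, and yours reduces to the same thing once the recursion scaffolding is removed. Second, your degree count for (ii) assumes a genus-$h$ vertex with $n$ flags has $A_2$-degree at most $3h-3+n$, asserted from dimension considerations but not proved. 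The fact actually needed (and proved in the paper) is stronger and cleaner: vertex contributions contain no $A_2$ at all, so the $A_2$-degree is bounded by the number of edges, hence by $3g-3$. Establishing this is where the real work lies: one passes through the wall-crossing of Proposition \ref{WC} to put the vertex series in Givental form (Proposition \ref{q2q2}) and invokes the Zagier--Zinger polynomiality $R_k\in\CC[L^{\pm1}]$ (Proposition \ref{RPoly}) for vertices, and the expansions $R_{1k}\in\CC[L^{\pm1}]$, $R_{2k}=Q_{2k}-\frac{R_{1\,k-1}}{L}X$ of Lemma \ref{RPoly2} for the linearity of edges in $X$. You correctly flag these inputs as the technical heart, but they are assumed rather than supplied, so the proposal is a faithful plan of the paper's argument rather than a complete proof.
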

\noindent Here, $\mathcal{F}^{\mathsf{SQ}}_g=\mathcal{F}^{\mathsf{SQ}}_{g,0}$.

\begin{Thm} \label{ttt} The holomorphic anomaly equations
for the stable quotient invariants of $K\proj^2$
hold for $g\geq 2$:
$$\frac{1}{C_1^2}\frac{\partial \mathcal{F}_g^{\mathsf{SQ}}}{\partial{A_2}}
= \frac{1}{2}\sum_{i=1}^{g-1} 
\frac{\partial \mathcal{F}_{g-i}^{\mathsf{SQ}}}{\partial{T}}
\frac{\partial \mathcal{F}_i^{\mathsf{SQ}}}{\partial{T}}
+
\frac{1}{2}
\frac{\partial^2 \mathcal{F}_{g-1}^{\mathsf{SQ}}}{\partial{T}^2}\,
.$$

\end{Thm}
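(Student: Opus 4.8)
The plan is to derive the holomorphic anomaly equation from an explicit study of the stable quotient graph sum for $K\PP^2$ via $\T$-equivariant localization on $\overline{Q}_{g}(K\PP^2,d)$, following the same architecture that is by now standard for local $\PP^2$ Gromov--Witten theory (Givental--Teleman style $R$-matrix action / the graph-contribution analysis of Coates--Iritani, Guo--Janda--Ruan, etc.), but carried out entirely on the stable quotient side so that the $S$- and $R$-matrices are the stable quotient $I$-function data of $K\PP^2$. First I would recall from the earlier sections of the paper the localization formula expressing $\mathcal{F}^{\mathsf{SQ}}_g$ as a sum over decorated graphs, where vertices carry genus-$h$ stable quotient vertex integrals over $\overline{Q}_{h,n}(\PP^2,\beta)$ twisted by the Euler class of $R\pi_*\mathsf{S}^{3}$ (the $K\PP^2$ twist), edges carry propagator factors, and legs carry the mirror-transformed insertions. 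The key structural input is that, after the specialization $\lambda_i = \zeta^i\lambda_0$ and after repackaging via the mirror map $T = I_1^{K\PP^2}$, every vertex and edge contribution lies in $\com[L^{\pm1}][A_2, C_1^{-1}]$ with the degree and homogeneity bounds of Theorem~\ref{ooo}; that theorem is exactly what licenses all the manipulations below.

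The heart of the argument is to identify $\partial \mathcal{F}^{\mathsf{SQ}}_g/\partial A_2$. The variable $A_2$ enters the graph sum \emph{only} through the edge propagator: differentiating the partition function with respect to $A_2$ therefore amounts, combinatorially, to marking one edge of each graph. A marked edge can be in one of two configurations — either it is a separating edge of the dual graph, whose removal splits the genus-$g$ curve into pieces of genus $i$ and $g-i$, or it is a non-separating edge, whose removal drops the genus by one. After dividing by $C_1^2$ (which is precisely the normalization that strips off the $q\frac{d}{dq}$-derivative of the propagator and converts the cut-edge half-nodes into $\partial/\partial T$ insertions, using that $C_1 = q\frac{d}{dq} I_1^{K\PP^2}$ so that $\partial/\partial T = \frac{1}{C_1} q\frac{d}{dq}$), the separating configuration reassembles into $\frac12\sum_{i=1}^{g-1}\frac{\partial \mathcal{F}^{\mathsf{SQ}}_{g-i}}{\partial T}\frac{\partial \mathcal{F}^{\mathsf{SQ}}_i}{\partial T}$ and the non-separating configuration reassembles into $\frac12 \frac{\partial^2 \mathcal{F}^{\mathsf{SQ}}_{g-1}}{\partial T^2}$, with the factors of $\frac12$ coming from the automorphism exchanging the two half-edges. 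Parts (iii) and (iv) of Theorem~\ref{ooo} are what guarantee that the $\partial/\partial T$-derivatives appearing on the right-hand side are themselves the geometric objects they are claimed to be, living in the correct ring with the correct $C_1^{-1}$-homogeneity, so that both sides of the asserted identity are genuine elements of $\com[L^{\pm1}][A_2,C_1^{-1}]$ and the equality can be checked there.

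I expect the main obstacle to be bookkeeping the precise form of the propagator and verifying that differentiating it in $A_2$ genuinely has the clean ``cut one edge'' effect with no correction terms — in particular, one must check that the $L$-dependent and $C_1^{-1}$-dependent pieces of the vertex contributions carry \emph{no} hidden $A_2$-dependence (so that the chain rule really does localize on edges), and that the normalization constants produced when converting a cut half-edge into a $\partial_T$-insertion match exactly, including signs and the $(1+27q)$-type factors hidden inside $L$. This is the step where the specific series $A_2(q) = L^{-3}\bigl(3\frac{q\frac{d}{dq}C_1}{C_1} + 1 - \frac{L^3}{2}\bigr)$ must be used in an essential way rather than treated formally; a secondary technical point is ensuring the boundary/stability exclusions in low genus ($g=1$ vertices needing markings, etc.) do not spoil the decomposition, which is handled by the $k\geq 1$ hypothesis in Theorem~\ref{ooo}(iii)--(iv). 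Once the edge-cutting dictionary is pinned down, the theorem follows by comparing the two sides termwise in the graph sum.
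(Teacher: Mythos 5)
Your overall strategy is the same as the paper's: the $A_2$-dependence of the localization graph sum sits only in the edge factors (this is Lemma \ref{L1}, the vertex contributions lie in $\CC(\lambda_0,\lambda_1,\lambda_2)[L^{\pm1}]$), so $\partial/\partial A_2$ marks one edge, and summing over the separating and non-separating configurations produces the two terms with the factor $\tfrac12$ — exactly the paper's equation \eqref{greww}. The first genuine gap is in what the cut edge actually produces. By Lemma \ref{L2} the coefficient of $X$ (equivalently $A_2$) in an edge contribution is, up to the normalization $L^3/(3C_1^2)$, a product of two \emph{leg} contributions with insertion $H$ in the sense of Proposition \ref{VEL}; so the identity you get is $\frac{1}{C_1^2}\frac{\partial \mathcal{F}_g^{\mathsf{SQ}}}{\partial A_2}=\frac12\sum_{i}\lan H\ran^{\mathsf{SQ}}_{g-i,1}\lan H\ran^{\mathsf{SQ}}_{i,1}+\frac12\lan H,H\ran^{\mathsf{SQ}}_{g-1,2}$, not yet the statement of the theorem. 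Converting the pointed series $\lan H\ran^{\mathsf{SQ}}_{h,1}$, $\lan H,H\ran^{\mathsf{SQ}}_{g-1,2}$ into $T$-derivatives of the unpointed $\mathcal{F}^{\mathsf{SQ}}_h$ needs the divisor equation, which is \emph{not} available directly in stable quotient theory (the forgetful map is not defined for quasimaps); the paper routes through the wall-crossing \eqref{3456} to Gromov–Witten theory, applies the divisor equation there, and wall-crosses back. Your step "dividing by $C_1^2$ converts the cut-edge half-nodes into $\partial/\partial T$ insertions, since $\partial/\partial T=\frac{1}{C_1}q\frac{d}{dq}$" is only a heuristic for this and skips the actual mechanism.

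The second, more serious gap concerns where the equality lives. The wall-crossing/divisor argument gives the identification of $\lan H\ran^{\mathsf{SQ}}_{h,1}$ with $\partial\mathcal{F}^{\mathsf{SQ}}_h/\partial T$ only as an equality in $\CC[[q]]$, whereas Theorem \ref{ttt} is an identity of the canonical \emph{lifts} in the free ring $\CC[L^{\pm1}][A_2,C_1^{-1}]$. Since the algebraic independence of $L$ and $A_2$ is not known, the evaluation map from this ring to $\CC[[q]]$ is not known to be injective, so your assertion that both sides lie in the ring and hence "the equality can be checked there" does not follow — Theorem \ref{ooo} (iii)–(iv) control membership and homogeneity, not the identification itself. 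The paper closes this gap with the lifting argument of Section \ref{llfftt}: a graph-by-graph comparison of $\lan H\ran^{\mathsf{SQ}}_{h,1}$ with $\partial_T\mathcal{F}^{\mathsf{SQ}}_h$ via the stabilization map $\mathsf{G}_{h,1}(\PP^2)\to\mathsf{G}_{h,0}(\PP^2)$, a local divisor equation \eqref{DE}, and the vanishing of the edge expression \eqref{EE}, which rests on Lemma \ref{R} and the relation \eqref{drule}. Without an argument of this kind your proof establishes at most the $q$-series form of the holomorphic anomaly equation, not the statement of the theorem.
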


The derivative of $\mathcal{F}_g^{\mathsf{SQ}}$ (the lift)
with respect to $A_2$ in the holomorphic anomaly
equation of Theorem \ref{ttt} is well-defined since 
$${\mathcal{F}_g^{\mathsf{SQ}}}\in \mathbb{C}[L^{\pm1}][A_2]$$
by Theorem \ref{ooo} part (i).
By Theorem \ref{ooo} parts (ii) and (iii),
$$\frac{\partial \mathcal{F}_{g-i}^{\mathsf{SQ}}}{\partial T}
\frac{\partial \mathcal{F}_{i}^{\mathsf{SQ}}}{\partial T}\, , \
\frac{\partial^2 \mathcal{F}_{g-1}^{\mathsf{SQ}}}{\partial{T}^2}\ 
\in \ \mathbb{C}[L^{\pm1}][A_2,C_1^{-1}]\, $$
are both of degree 2 in $C_1^{-1}$. Hence, 
the holomorphic anomaly equation of Theorem \ref{ttt} may be viewed
as holding 
in $\mathbb{C}[L^{\pm1}][A_2]$
since the factors of $C_1^{-1}$ on the left and right sides cancel.
The
holomorphic anomaly equations here for
$K\PP^2$ are exactly as presented in \cite[(4.27)]{ASYZ} via
B-model physics.

Theorem \ref{ttt} determines
$\mathcal{F}_g^{\mathsf{SQ}}\in \mathbb{C}[L^{\pm1}]$ uniquely as a polynomial
in $A_2$ up to a constant term in $\mathbb{C}[L^{\pm1}]$.
In fact, the degree of the constant term can be bounded
(as will be seen in the proof of Theorem \ref{ttt}). So
Theorem \ref{ttt} determines $\mathcal{F}_g^{\mathsf{SQ}}$ 
from the lower genus theory together with a finite amount of data.


\subsection{Plan of the paper}
After a review of the moduli space of stable quotients 
in Section \ref{sqsqsq} and the associated $\mathsf{T}$-fixed point
loci in Section \ref{locq}, the corresponding $I$-functions and vertex
integrals are discussed in Sections \ref{bcbcbc} and \ref{hgi}.
The localization formula for $K\mathbb{P}^2$ in the
precise form required for the holomorphic anomaly equation
is presented in Sections \ref{hgs} and \ref{svel}. Theorems
\ref{ooo} and \ref{ttt} are proven in Section \ref{hafp}.

At the end of paper, in Section \ref{hakp2}, we prove a 
new holomorphic anomaly
equation for a stable quotient theory of $K\mathbb{P}^2$
{\em with insertions} parallel to (and inspired by)
the recent work  of Oberdieck and Pixton \cite{ObPix} on a cycle
level holomorphic
anomaly equation for the theory of an elliptic curve. 

\subsection{Holomorphic anomaly for the
formal quintic}
Let $g\geq 2$. The Gromov-Witten
invariants of the formal quintic theory were
defined in Section \ref{ham}.
The associated generating series is 
$$\widetilde{\mathcal{F}}^{\mathsf{GW}}_g(Q)\, =\, 
\sum_{d=0}^\infty \widetilde{N}_{g,d}^{\mathsf{GW}} Q^d\, 
\in \, \mathbb{C}[[Q]] \, .$$
We {\em define} the generating series
of stable quotient invariants for
formal quintic theory by the
wall-crossing formula \eqref{345} 
for the true quintic theory,
\begin{equation}\label{jjed}
\widetilde{\mathcal{F}}_{g}^{\mathsf{GW}}(Q(q)) =
I^{\mathsf{Q}}_0(q)^{2g-2} \cdot \widetilde{\mathcal{F}}_{g}^{\mathsf{SQ}}(q)
\end{equation}
with respect to the true quintic mirror map
$$Q(q) = \exp\left(\frac{I^{\mathsf{Q}}_1(q)}
{I^{\mathsf{Q}}_0(q)}\right)\, = \, 
q\cdot \exp\left( \frac{5 \sum_{d=1}^\infty q^d \frac{(5d)!}{(d!)^5} 
\left( \sum_{r=d+1}^{5d} \frac{1}{r}\right)}{\sum_{d=0}^\infty\frac{(5d)!}{(d!)^5}} \right)
\, .$$
Denote the B-model side of \eqref{jjed} by
$$\widetilde{F}^{\mathsf{B}}_g(q)= I^{\mathsf{Q}}_0(q)^{2g-2} \widetilde{\mathcal{F}}^{\mathsf{SQ}}_g(q)\, .$$

In order to state the holomorphic anomaly equations,
we require several series in $q$. First, let
$$L(q) \, =\,   (1-5^5 q)^{-\frac{1}{5}}\,  = \, 1+625q+117185 q^2 +\ldots\, .$$
Let $\mathsf{D}=q\frac{d}{dq}$, and let
$$C_0(q)= I^{\mathsf{Q}}_0\, , \ \ \ 
C_1(q)= \mathsf{D} \left( \frac{I^{\mathsf{Q}}_1}
{I^{\mathsf{Q}}_0}\right)\, ,$$
where $I_0$ and $I_1$ and the hypergeometric series
appearing in the mirror map for the true quintic theory.
We define
\begin{eqnarray*}
K_2(q)&=& -\frac{1}{L^5} \frac{\DD C_0}{C_0}\,  ,
\\
A_2(q)&=& \frac{1}{L^5}\left( -\frac{1}{5}\frac{\DD C_1}{C_1}-\frac{2}{5}\frac{\DD C_0}{C_0}-\frac{3}{25}\right)\, ,\\
A_4(q) &=& \frac{1}{L^{10}} \Bigg(-\frac{1}{25}\left(
\frac{\DD C_0}{C_0}\right)^2-\frac{1}{25}
\left(\frac{\DD C_0}{C_0}\right)\left(\frac{\DD C_1}{C_1}\right)
\, \\
& &
+\frac{1}{25}\DD\left(\frac{\DD C_0}{C_0}\right)+\frac{2}{25^2} \Bigg)\,  ,\\
A_6(q) &=&\frac{1}{31250L^{15} }\Bigg( 4+125 \DD\left(\frac{\DD C_0}{C_0}\right)
+50\left(\frac{\DD C_0}{C_0}\right)
\left(1+10 \DD \left(\frac{\DD C_0}{C_0}
\right)\right)\,    \\
& & -5L^5\Bigg(1+10\left(\frac{\DD C_0}{C_0}\right)+25
\left(\frac{\DD C_0}{C_0}\right)^2+25\DD
\left(\frac{q\frac{d}{dq}C_0}{C_0}\right)\Bigg)\,  \\
& &+125\DD^2\left(\frac{\DD C_0}{C_0}\right)-125\left(\frac{\DD C_0}{C_0}\right)^2\Bigg(\left(\frac{\DD C_1}{C_1}\right)-1\Bigg) \Bigg)\, .
\end{eqnarray*}
Let $T$ be the standard coordinate mirror to $t=\log(q)$,
\begin{align*} T= \frac{I^{\mathsf{Q}}_1(q)}{I^{\mathsf{Q}}_0(q)}\, .
\end{align*}
Then $Q(q)=\exp(T)$ is the mirror map. Let $$\mathbb{C}[L^{\pm1}][A_2,A_4,A_6,C_0^{\pm 1},C_1^{-1},K_2]$$ 
be the free
polynomial ring over $\mathbb{C}[L^{\pm1}]$.

\begin{Thm} \label{ooo5} For the series 
$\widetilde{\mathcal{F}}_g^{\mathsf{B}}$
associated to the formal quintic, 
\begin{enumerate}
\item[(i)]
$\widetilde{\mathcal{F}}_g^{\mathsf{B}} (q) \in \mathbb{C}[L^{\pm1}][A_2,A_4,A_6,C_0^{\pm 1},C_1^{-1},K_2]$
for $g\geq 2$, \vspace{5pt}

\item[(ii)]
$\frac{\partial^k \widetilde{\mathcal{F}}_g^{\mathsf{B}}}{\partial T^k}(q) \in \mathbb{C}[L^{\pm1}][A_2,A_4,A_6,C_0^{\pm 1},C_1^{-1},K_2]$ for  $g\geq 1$, $k\geq 1$,
\vspace{5pt}

\item[(iii)]
${\frac{\partial^k \widetilde{\mathcal{F}}_g^{\mathsf{B}}}{\partial T^k}}$ is homogeneous 
with respect  to $C_1^{-1}$ 
of degree $k$.
\end{enumerate}
\end{Thm}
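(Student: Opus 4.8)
The plan is to establish Theorem \ref{ooo5} by the same mechanism that proves Theorem \ref{ooo} for local $\PP^2$ in Section \ref{hafp}, now applied to the $(\mathsf a,\mathsf b)=((5),\emptyset)$-twisted geometry of $\PP^4$. Because $\widetilde{\mathcal F}^{\mathsf{SQ}}_g$ is \emph{defined} by the wall-crossing identity \eqref{jjed}, the first step is to produce a localization formula for $\widetilde{\mathcal F}^{\mathsf B}_g = I^{\mathsf Q}_0(q)^{2g-2}\widetilde{\mathcal F}^{\mathsf{SQ}}_g$. Two equivalent routes are available: apply $\T_5$-localization to $\overline M_g(\PP^4,d)$ against the equivariant Euler class $e(R\pi_*\mathsf S^{-5})$ of \eqref{fredfredfred} and then change variables by \eqref{jjed}; or localize on an Euler-twisted quasimap model as in \cite{CKM,CKw}, whose $\T_5$-fixed loci are the rigid loci of Section \ref{locq}. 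The second is cleaner, since on the stable quotient side the vertices carry no moduli beyond products of $\overline M_{0,n}$'s with $\psi$-integrals. After the specialization $\lambda_i=\zeta^i\lambda_0$, either route presents $\widetilde{\mathcal F}^{\mathsf B}_g$ as a finite sum over decorated graphs, each term a product of vertex contributions --- the twisted vertex integrals of Sections \ref{bcbcbc} and \ref{hgi}, i.e.\ $\overline M_{h,n}$-integrals of $\psi$-monomials against the Hodge-type classes produced by $R\pi_*\mathsf S^{-5}$ --- and edge contributions, explicit rational functions of the torus weights and of the node $\psi$-classes, summed over the degree distribution along the edge.

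The heart of the argument is the identification of the building blocks. After the mirror transformation $Q=\exp(I^{\mathsf Q}_1/I^{\mathsf Q}_0)$ and the root-of-unity specialization, the vertex and edge data are governed by the asymptotic expansion at the conifold point of the formal quintic $I$-function --- equivalently by the entries of the fundamental solution of the associated quantum differential equation, written in the canonical idempotent frame. One must show that, in the appropriate normalization, all these entries have coefficients in $R:=\mathbb{C}[L^{\pm1}][A_2,A_4,A_6,C_0^{\pm 1},C_1^{-1},K_2]$. This is exactly what dictates the definitions of the seven series: $L=(1-5^5q)^{-1/5}$ is the discriminant factor resumming the geometric series along edges, $C_0=I^{\mathsf Q}_0$ and $C_1=\DD(I^{\mathsf Q}_1/I^{\mathsf Q}_0)$ are the normalizations of the $I$-function and of its divisor derivative, and $K_2,A_2,A_4,A_6$ are the precise combinations of iterated logarithmic $\DD$-derivatives of $C_0,C_1$ occurring in the higher terms. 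A key algebraic lemma, checked by direct computation, is that $R$ is closed under $\DD=q\frac{d}{dq}$: one has $\DD L=\tfrac15(L^6-L)$, $\DD C_0=-L^5K_2\,C_0$, and $\DD C_1=(-5L^5A_2+2L^5K_2-\tfrac35)C_1$ straight from the definitions of $K_2$ and $A_2$, while $A_4,A_6$ are introduced precisely so that $\DD K_2,\DD A_2,\DD A_4,\DD A_6$ also land back in $R$ --- the $\PP^4$-analogue of the identity $\DD A_2\in\mathbb{C}[L^{\pm1}][A_2]$ used for $K\PP^2$.

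Granting the identification, part (i) is formal: each vertex integral evaluates to a rational number times a monomial in the generators, each edge contributes a Laurent polynomial in $L$ times such monomials, and for fixed $g$ only finitely many graphs occur; hence $\widetilde{\mathcal F}^{\mathsf B}_g\in R$, and in fact (having no markings) it lies in the subring $\mathbb{C}[L^{\pm1}][A_2,A_4,A_6,C_0^{\pm1},K_2]$ with no factor of $C_1^{-1}$. For parts (ii) and (iii), grade $R$ by declaring $C_1^{-1}$ to have degree $1$ and all other generators degree $0$; the closure relations above show that $\DD$ is homogeneous of degree $0$ for this grading, so $\partial/\partial T=C_1^{-1}\DD$ (valid since $\DD T=C_1$) is homogeneous of degree $1$ and preserves $R$. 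Applying $\partial/\partial T$ a total of $k$ times to the degree-$0$ element $\widetilde{\mathcal F}^{\mathsf B}_g$ therefore yields an element of $R$ homogeneous of degree $k$ in $C_1^{-1}$, which is (ii) together with (iii). The case $g=1$ of (ii)--(iii), where $\widetilde{\mathcal F}^{\mathsf B}_1$ itself need not lie in $R$ (a possible logarithmic term), is handled separately using the explicit genus-one formula for the formal quintic, to which $C_1^{-1}\DD$ is applied directly.

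The main obstacle will be the explicit asymptotic analysis underlying the second paragraph: computing the normalized fundamental solution of the formal quintic and verifying that exactly the seven series $L,C_0,C_1,A_2,A_4,A_6,K_2$ --- and no further independent series --- are required, together with the closure of $R$ under $\DD$. This bookkeeping is heavier than for local $\PP^2$ because the ambient $\PP^4$ is five-dimensional and the twist $e(R\pi_*\mathsf S^{-5})$ contributes a longer product of Chern roots, which is what forces the four generators $A_2,A_4,A_6,K_2$ in place of the single $A_2$. A secondary, more routine obstacle is legitimizing the localization presentation of $\widetilde{\mathcal F}^{\mathsf B}_g$ even though $\widetilde{\mathcal F}^{\mathsf{SQ}}_g$ is only defined through \eqref{jjed}: this is resolved either by working throughout on the genuinely geometric Gromov--Witten side \eqref{fredfredfred} and transporting via the explicit change of variables, or by proving a wall-crossing statement for the Euler-twisted quasimap theory along the lines of \cite{CKw}.
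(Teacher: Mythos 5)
You should first note a structural point: this paper does not actually prove Theorem \ref{ooo5}. Immediately after Theorem \ref{ttt5} the authors state that Theorems \ref{ooo5} and \ref{ttt5} ``will be proven in \cite{LP2}''; only the $K\PP^2$ analogues (Theorems \ref{ooo} and \ref{ttt}) are proven here. So there is no in-paper proof to compare against, and your proposal can only be judged against the method of Sections \ref{bcbcbc}--\ref{hafp}, which is indeed the template the sequel follows. At that level your plan is the right one, and the elementary computations you quote are correct: $\DD L=\tfrac15(L^6-L)$, $\DD C_0=-L^5K_2\,C_0$, and $\DD C_1=(-5L^5A_2+2L^5K_2-\tfrac35)C_1$ all follow from the definitions, and the grading argument giving parts (ii)--(iii) from part (i) plus $\partial/\partial T=C_1^{-1}\DD$ is exactly the argument used in the proof of Theorem \ref{ooo}.

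The genuine gap is that the heart of the matter is asserted rather than proven. For $K\PP^2$, part (i) is not formal: it rests on the asymptotic expansions \eqref{assym} and \eqref{VS}, on Proposition \ref{RPoly} and Lemma \ref{RPoly2} (placing the coefficients $R_{0p},R_{1p},R_{2p}$ in $\CC[L^{\pm1}]$ resp.\ $\CC[L^{\pm1}][X]$), and on Lemma \ref{R} together with the relation \eqref{drule}, which collapses the differential ring \eqref{ddd333} to $\CC[L^{\pm1}][X]$. Your ``key algebraic lemma, checked by direct computation'' is precisely the $\PP^4$ analogue of all of this, and it is where essentially all the work of \cite{LP2} lies: one needs asymptotic expansions of the five series $\overline{\mathds{S}}_i(H^j)$, $j=0,\dots,4$, must show that the higher coefficients $R_{3p},R_{4p}$ introduce no generators beyond those accounted for by $A_2,A_4,A_6,K_2$ (equivalently, beyond $X=\DD C_0/C_0$, $Y=\DD C_1/C_1$ and finitely many of their $\DD$-derivatives), and must establish the $\PP^4$ analogues of \eqref{drule} expressing $\DD$-derivatives of these generators back in the ring. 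None of this is carried out or even reduced to a precise statement in your write-up; you only name the conclusion. Likewise the $g=1$ case of (ii)--(iii), which you defer to ``the explicit genus-one formula,'' needs an actual argument (compare how the paper handles genus $1$ via Proposition \ref{VEL} at the end of Section \ref{prttt}). Finally, your parenthetical claim that $\widetilde{\mathcal{F}}_g^{\mathsf{B}}$ contains no $C_1^{-1}$ at all is plausible but again unverified, since the edge and leg contributions for the twisted $\PP^4$ theory (unlike the $K\PP^2$ edge terms of Lemma \ref{L2}) are not computed anywhere in your argument. So: right strategy, correct bookkeeping where you did it, but the central technical content of the theorem is assumed, not proven.
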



\begin{Thm} \label{ttt5} The holomorphic anomaly equations
for the series $\widetilde{\mathcal{F}}^{\mathsf{B}}_g$
associated to the formal quintic hold for $g\geq 2$:
\begin{multline*}
\frac{1}{C_0^2C_1^2}\frac{\partial \widetilde{\mathcal{F}}_g^{\mathsf{B}}}{\partial{A_2}}-\frac{1}{5C_0^2C_1^2}\frac{\partial \widetilde{\mathcal{F}}_g^{\mathsf{B}}}{\partial{A_4}}K_2+\frac{1}{50C_0^2C_1^2}\frac{\partial \widetilde{\mathcal{F}}_g^{\mathsf{B}}}{\partial{A_6}}K_2^2
= \\
\frac{1}{2}\sum_{i=1}^{g-1} 
\frac{\partial \widetilde{\mathcal{F}}_{g-i}^{\mathsf{B}}}{\partial{T}}
\frac{\partial \widetilde{\mathcal{F}}_i^{\mathsf{B}}}{\partial{T}}
+
\frac{1}{2}\,
\frac{\partial^2 \widetilde{\mathcal{F}}_{g-1}^{\mathsf{B}}}{\partial{T}^2}\,
.
\end{multline*}
\end{Thm}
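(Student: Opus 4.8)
The plan is to mirror the proof of Theorem~\ref{ttt} for $K\PP^2$, transporting it to the formal quintic setting. Since $\widetilde{\mathcal F}^{\mathsf B}_g(q) = I^{\mathsf Q}_0(q)^{2g-2}\,\widetilde{\mathcal F}^{\mathsf{SQ}}_g(q)$ and the stable quotient series $\widetilde{\mathcal F}^{\mathsf{SQ}}_g$ are \emph{defined} via the wall-crossing relation \eqref{jjed} from the formal Gromov--Witten series $\widetilde N^{\mathsf{GW}}_{g,d}$, the first step is to obtain a localization formula for $\widetilde N^{\mathsf{GW}}_{g,d}$ on $\overline M_g(\PP^4,d)$ with the insertion $e(R\pi_*(\mathsf S^{-5}))$, in the same ``graph sum'' normal form used for $K\PP^2$ in Sections~\ref{hgs}--\ref{svel}. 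Concretely: the $\mathsf T_5$-fixed loci are products of moduli of stable curves glued along chains mapping to the torus-fixed points of $\PP^4$; the vertex contributions are governed by hypergeometric $I$-functions for the formal quintic (the series $I^{\mathsf Q}_0, I^{\mathsf Q}_1$ and their descendants), and the edge contributions by the Euler class of $R\pi_*\mathsf S^{-5}$ restricted to chains. After the specialization $\lambda_i=\zeta^i\lambda_0$ with $\zeta^5=1$, one extracts a finite-dimensional ``$R$-matrix'' / Givental--Teleman style reconstruction of $\widetilde{\mathcal F}^{\mathsf B}_g$ purely in terms of the genus-$0$ data packaged in $L$, $C_0$, $C_1$, $K_2$, $A_2$, $A_4$, $A_6$.

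Granting Theorem~\ref{ooo5}, which already places $\widetilde{\mathcal F}^{\mathsf B}_g$ and its $T$-derivatives in the polynomial ring $\mathbb C[L^{\pm1}][A_2,A_4,A_6,C_0^{\pm1},C_1^{-1},K_2]$, the holomorphic anomaly equation becomes a statement purely about how the differential operator
\[
\frac{1}{C_0^2 C_1^2}\Bigl(\frac{\partial}{\partial A_2} - \frac{K_2}{5}\frac{\partial}{\partial A_4} + \frac{K_2^2}{50}\frac{\partial}{\partial A_6}\Bigr)
\]
acts on these lifts. The key analytic input is a set of Ramanujan-type differential equations expressing $\mathsf D L$, $\mathsf D C_0$, $\mathsf D C_1$, $\mathsf D A_2$, $\mathsf D A_4$, $\mathsf D A_6$, $\mathsf D K_2$ as polynomials in the generators (these follow from the Picard--Fuchs equation for the quintic, i.e. the hypergeometric ODE satisfied by $I^{\mathsf Q}_0$, together with the definitions of $A_{2k}$ and $K_2$). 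Using $\partial/\partial T = C_1^{-1}\mathsf D$ on the B-model side, one rewrites both sides of the claimed identity in the polynomial ring; the right-hand side is manifestly the genus-reduction bilinear form familiar from BCOV. The left-hand side is then matched term by term against the graph-sum localization expression: the ``$\partial/\partial A_{2k}$'' derivatives correspond precisely to the operation of breaking an edge of a localization graph (degenerating the curve along a node), exactly as in the $K\PP^2$ proof, and the combination of the three derivatives with coefficients $1, -K_2/5, K_2^2/50$ is what cuts out the contribution of the ``Calabi-Yau'' direction in the $5$-dimensional ambient torus after the root-of-unity specialization.

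I would carry this out in the order: (1) set up the $\mathsf T_5$-localization graph sum for $\widetilde N^{\mathsf{GW}}_{g,d}$, isolating vertex and edge factors; (2) perform the specialization $\lambda_i=\zeta^i\lambda_0$ and identify the resulting genus-$0$ building blocks with $L, C_0, C_1, K_2, A_2, A_4, A_6$; (3) derive the Ramanujan system of ODEs for these generators; (4) compute the action of $\partial/\partial A_2, \partial/\partial A_4, \partial/\partial A_6$ on $\widetilde{\mathcal F}^{\mathsf B}_g$ in graph-sum terms and show it equals edge-breaking; (5) assemble the three pieces with coefficients $1,-K_2/5,K_2^2/50$ and verify the sum reproduces the node-splitting formula, which is the bilinear genus-reduction of the right-hand side. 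The main obstacle is step (2) together with the bookkeeping in step (5): after the fifth-root-of-unity specialization the individual edge and vertex factors are singular in $\lambda_0$, and one must show that the specific combination of $A_4$- and $A_6$-derivatives (weighted by powers of $K_2$) is exactly the one that renders the total finite and matches the physics normalization of \cite{LP2}; tracking the precise rational constants $\tfrac15$ and $\tfrac1{50}$ through the localization residues is the delicate point, and is the formal-quintic analogue of the single $C_1^{-2}\,\partial/\partial A_2$ term in Theorem~\ref{ttt}. As in the $K\PP^2$ case, once the equation is established the polynomiality and degree statements of Theorem~\ref{ooo5} guarantee that the $C_0^2C_1^2$ denominators cancel, so the identity in fact holds in $\mathbb C[L^{\pm1}][A_2,A_4,A_6,C_0^{\pm1},C_1^{-1},K_2]$.
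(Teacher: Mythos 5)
You should first note that this paper does not actually prove Theorem \ref{ttt5}: the text states explicitly that Theorems \ref{ooo5} and \ref{ttt5} ``will be proven in \cite{LP2}'', so the only internal template for comparison is the $K\PP^2$ argument of Sections \ref{hgs}--\ref{hafp}. Your outline follows the right general strategy (torus localization graph sums, root-of-unity specialization, identification of the genus~$0$ building blocks, edge-breaking matching the node-splitting bilinear form), but as written it has genuine gaps at exactly the points where the $K\PP^2$ proof needed its two structural lemmas. In the $K\PP^2$ case the whole argument rests on Lemma \ref{L1} (vertex contributions lie in $\CC(\lambda)[L^{\pm1}]$, hence are annihilated by $\partial/\partial X$) and Lemma \ref{L2} (edge contributions are \emph{linear} in $X$ with an explicitly factorized coefficient $(-1)^{k+l}3R_{1\,k-1}R_{1\,l-1}/(L\lambda_i^{k-2}\lambda_j^{l-2})$), together with the closure relation \eqref{drule} which collapses the differential ring to $\CC[L^{\pm1}][X]$. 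For the formal quintic the asymptotic $R$-series attached to $\PP^4$ involve additional generators (derivatives of $X$, the series entering $A_4$, $A_6$, $K_2$), and the content of the theorem is precisely that the combined operator $\partial_{A_2}-\tfrac{K_2}{5}\partial_{A_4}+\tfrac{K_2^2}{50}\partial_{A_6}$ annihilates every vertex contribution and acts on each edge contribution by producing a product of two leg-type factors with the stated constants. Your plan never isolates these two statements, and your explanation of where the coefficients $1,-\tfrac15,\tfrac1{50}$ come from is off target: nothing needs to be ``rendered finite'' after the specialization $\lambda_i=\zeta^i\lambda_0$ (the twisted integrals are already well-defined rational numbers); the coefficients arise from the algebraic identities satisfied by the extra generators under $\DD$, not from any regularization of singular localization residues.

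A second gap is the lifting step. In the $K\PP^2$ proof, matching the right-hand side required identifying $\lan H\ran^{\mathsf{SQ}}_{h,1}$ and $\lan H,H\ran^{\mathsf{SQ}}_{h,2}$ with $\partial_T\mathcal{F}_h^{\mathsf{SQ}}$ and $\partial_T^2\mathcal{F}_{h-1}^{\mathsf{SQ}}$ \emph{as elements of the polynomial ring}, which is the entire content of Section \ref{llfftt} (the local divisor equation \eqref{DE} and the vanishing of \eqref{EE}); wall-crossing plus the divisor equation only give the identity in $\mathbb{C}[[q]]$, and since $L$, $A_{2k}$, $C_i$, $K_2$ are not known to be algebraically independent, polynomiality from Theorem \ref{ooo5} does not by itself promote a $q$-series identity to an identity of lifts, contrary to your closing remark about the $C_0^2C_1^2$ denominators. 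An analogous lifting argument (now with more generators and with the $T$-derivatives defined through the wall-crossing normalization $\widetilde{F}^{\mathsf{B}}_g=I_0^{2g-2}\widetilde{\mathcal{F}}^{\mathsf{SQ}}_g$) is needed and is not addressed in your proposal.
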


\noindent 
The equality of Theorem \ref{ttt5} holds in 
the ring
$$\mathbb{C}[L^{\pm1}][A_2,A_4,A_6,C_0^{\pm1},C_1^{-1},K_2]\, .$$
Theorem \ref{ttt5} exactly matches{\footnote{Our functions
$K_2$ and $A_{2k}$ 
 are normalized differently with respect to $C_0$ and $C_1$.
The dictionary to exactly match the notation of \cite[(2.52)]{ASYZ} is to 
multiply our $K_2$ by $(C_0C_1)^2$ and our $A_{2k}$ by $(C_0C_1)^{2k}$.}}
the
conjectural holomorphic anomaly equation
 \cite[(2.52)]{ASYZ} for
the true quintic theory $\mathcal{F}_g^{\mathsf{SQ}}$.
Theorems \ref{ooo5} and \ref{ttt5}
will be proven in \cite{LP2}.

\subsection{Acknowledgments} 
We are very grateful to  I.~Ciocan-Fontanine, E. Clader, Y. Cooper, F. Janda, B.~Kim, 
A.~Klemm, Y.-P. Lee, A.~Marian, M.~Mari\~no, D.~Maulik, D.~Oprea, Y.~Ruan,  E.~Scheidegger, Y. Toda, and A.~Zinger
for discussions over the years 
about the moduli space of stable quotients and the invariants of Calabi-Yau geometries. 

R.P. was partially supported by 
SNF-200020162928, ERC-2012-AdG-320368-MCSK, SwissMAP, and
the Einstein Stiftung. 
H.L. was supported by the grant ERC-2012-AdG-320368-MCSK.
The results here
were presented 
by H.L. in genus 2 at 
 {\em Curves
on surfaces and 3-folds} at the Bernoulli center
in Lausanne in June 2016
and in all genus at {\em Moduli
of curves, sheaves, and $K3$ surfaces} at Humboldt University
in Berlin in February 2017.


\section{Stable quotients}\label{sqsqsq}

\subsection{Overview}
We review here the basic definitions related to the moduli space of stable quotients following \cite{MOP}. While the
quasimap theory \cite{CKM} is
more general (and the
quasimap terminology
of \cite{CKw,KL} will appear later in
our proofs), only the original moduli spaces of stable quotients are
used in the paper.

\subsection{Stability} 
Let $(C,p_1,\ldots,p_n)$ be an $n$-pointed quasi-stable curve:
\begin{enumerate}
\item[$\bullet$]
$C$ is a reduced, connected, complete, scheme of dimension 1 with   
at worst nodal singularities,
\item[$\bullet$]
 the markings $p_i$ are distinct and lie in the nonsingular
locus $p_i\in C^{ns}$.
\end{enumerate}
Let $q$ be a quotient of the rank $N$ trivial bundle
 $C$,
\begin{equation*}
\com^N \otimes \oh_C \stackrel{q}{\rarr} Q \rarr 0.
\end{equation*}
If the quotient sheaf $Q$ 
is locally free at the nodes and markings of $C$,
 then
$q$ is a {\it quasi-stable quotient}. 
 Quasi-stability of $q$ implies the associated
kernel,
\begin{equation*}
0 \rarr S \rarr
\com^N \otimes \oh_C \stackrel{q}{\rarr} Q \rarr 0,
\end{equation*}
is a locally free sheaf on $C$. Let $r$ 
denote the rank of $S$.

Let $(C,p_1,\ldots,p_n)$ be an $n$-pointed quasi-stable
curve equipped
with a quasi-stable quotient $q$.
The data $(C,p_1,\ldots,p_n,q)$ determine 
a {\it stable quotient} if
the $\mathbb{Q}$-line bundle 
\begin{equation}\label{aam}
\omega_C(p_1+\ldots+p_n)
\otimes (\wedge^{r} S^*)^{\otimes \epsilon}
\end{equation}
is ample 
on $C$ for every strictly positive $\epsilon\in \mathbb{Q}$.
Quotient stability implies
$$2g-2+n \geq 0$$
where $g$ is the arithmetic genus of $C$.

Viewed in concrete terms, no amount of positivity of
$S^*$ can stabilize a genus 0 component 
$$\proj^1\stackrel{\sim}{=}P \subset C$$
unless $P$ contains at least 2 nodes or markings.
If $P$ contains exactly 2 nodes or markings,
then $S^*$ {\it must} have positive degree.

A stable quotient $(C,q)$
yields a rational map from the underlying curve
$C$ to the Grassmannian $\mathbb{G}(r,N)$.
 We be will mainly interested in the projective space case here,
$${\mathbb{G}}(1,m+1)=\proj^{m}\, ,$$
but the definitions are uniform
for all Grassmannian targets.

\subsection{Isomorphism}
Let $(C,p_1,\ldots,p_n)$ be an $n$-pointed curve.
Two quasi-stable quotients
\begin{equation}\label{fpp22}
\com^N \otimes \oh_C \stackrel{q}{\rarr} Q \rarr 0,\ \ \
\com^N \otimes \oh_C \stackrel{q'}{\rarr} Q' \rarr 0
\end{equation}
on $(C,p_1,\ldots,p_n)$ 
are {\it strongly isomorphic} if
the associated kernels 
$$S,S'\subset \com^N \otimes \oh_C$$
are equal.

An {\it isomorphism} of quasi-stable quotients
 $$\phi:(C,p_1,\ldots,p_n,q)\rarr
(C',p'_1,\ldots,p'_n,q')
$$ is
an isomorphism of pointed curves
$$\phi: (C,p_1,\ldots,p_n) \stackrel{\sim}{\rarr} (C',p'_1,\ldots,p'_n)$$
such that
the quotients $q$ and $\phi^*(q')$ 
are strongly isomorphic.
Quasi-stable quotients \eqref{fpp22} on the same
curve $(C,p_1,\ldots,p_n)$
may be isomorphic without being strongly isomorphic.

The following result is proven in \cite{MOP} by
Quot scheme methods from the perspective
of geometry relative to a divisor.

\vspace{8pt}
\noindent{\bf{Theorem A.}}{\it The moduli space of stable quotients 
$\overline{Q}_{g,n}({\mathbb{G}}(r,N),d)$ parameterizing the
data
$$(C,p_1,\ldots,p_n,\  0\rarr S \rarr
\com^N\otimes \oh_C \stackrel{q}{\rarr} Q \rarr 0),$$
with {\it rank}$(S)=r$ and {\it deg}$(S)=-d$,
is a separated and proper Deligne-Mumford stack of finite type
over $\com$.}

\subsection{Structures}\label{strrr}
Over the moduli space of stable quotients, there is a universal
$n$-pointed curve
\begin{equation}\label{ggtt}
\pi: \mathcal{C} \rightarrow \overline{Q}_{g,n}({\mathbb{G}}(r,N),d)
\end{equation}
with a universal quotient
$$0 \rarr \mathsf{S} \rarr \com^N \otimes \oh_{\mathcal{C}} \stackrel{q_U}{\rarr} 
\mathsf{Q}\rarr 0.$$
The subsheaf $\mathsf{S}$ is locally 
free on $\mathcal{C}$ because of the
stability condition.

The moduli space $\overline{Q}_{g,n}({\mathbb{G}}(r,N),d)$ is equipped
with two basic types of maps.
If $2g-2>0$, then the stabilization of $C$
determines a map
$$\nu:\overline{Q}_{g,n}({\mathbb{G}}(r,N),d) \rightarrow \overline{M}_{g,n}$$
by forgetting the quotient.

The general linear group $\mathbf{GL}_N(\com)$ acts on
$\overline{Q}_{g,n}({\mathbb{G}}(r,N),d)$ via 
the standard
action on $\com^N \otimes \oh_C$. The structures
$\pi$, $q_U$,
$\nu$ and the evaluations maps are all $\mathbf{GL}_N(\com)$-equivariant. We will be interested in the
diagonal torus action,
\begin{equation}\label{ggpp2}
\T_{N+1}\subset \mathbf{GL}_N(\mathbb{C})\, .
\end{equation}

\subsection{Obstruction theory}
The moduli of stable
quotients 
 maps 
to the Artin stack of pointed domain curves
$$\nu^A:
\overline{Q}_{g,n}({\mathbb{G}}(r,N),d) \rightarrow {\mathcal{M}}_{g,n}.$$
The moduli  of stable quotients with fixed underlying
pointed curve 
$[C,p_1,\ldots,p_n] \in {\mathcal{M}}_{g,n}$
 is simply
an open set of the Quot scheme of $C$. 
The following result of \cite[Section 3.2]{MOP} is obtained from the
standard deformation theory of the Quot scheme.

\vspace{8pt}
\noindent{\bf{Theorem B.}}{\it The deformation theory of the Quot scheme 
determines a 2-term obstruction theory on the moduli space
$\overline{Q}_{g,n}({\mathbb{G}}(r,N),d)$ relative to
$\nu^A$
given by ${{RHom}}(S,Q)$.}
\vspace{8pt}

More concretely, for the stable quotient,
\begin{equation*}
0 \rarr S \rarr
\com^N \otimes \oh_C \stackrel{q}{\rarr} Q \rarr 0,
\end{equation*} the
deformation and obstruction spaces relative to $\nu^A$ are
$\text{Hom}(S,Q)$ and $\text{Ext}^1(S,Q)$
respectively. Since $S$ is locally free, the higher obstructions
$$\text{Ext}^{k}(S,Q)= H^{k}(C,S^*\otimes Q) = 0, \ \ \ k>1$$
vanish since $C$ is a curve.
An absolute 2-term obstruction theory on the moduli space
$\overline{Q}_{g,n}({\mathbb{G}}(r,N),d)$ is
obtained from Theorem B and the smoothness
of $\mathcal{M}_{g,n}$, see \cite{BF,GP}. 

The $\mathbf{GL}_N(\com)$-action lifts to the
obstruction theory,
and the resulting virtual class is
defined in $\mathbf{GL}_N(\com)$-equivariant cycle theory,
$$[\overline{Q}_{g,n}({\mathbb{G}}(r,N),d)]^{\text{vir}} 
\in A_*^{\mathbf{GL}_N(\com)}
(\overline{Q}_{g,n}({\mathbb{G}}(r,N),d)).$$
Via the restriction to the
torus \eqref{ggpp2}, we obtain
$$[\overline{Q}_{g,n}({\mathbb{G}}(r,N),d)]^{\text{vir}} 
\in A_*^{\T_{N+1}}
(\overline{Q}_{g,n}({\mathbb{G}}(r,N),d)).$$

\section{Localization graphs}

\label{locq}
\subsection{Torus action}
Let $\mathsf{T}=(\com^*)^{m+1}$ act diagonally on the vector space $\mathbb{C}^{m+1}$
with weights
$$-\lambda_0, \ldots, -\lambda_m\, .$$
Denote the $\mathsf{T}$-fixed points of 
the induced $\mathsf{T}$-action on $\proj^m$ by
$$p_0, \ldots, p_{m}\, . $$
The weights of $\mathsf{T}$ on the tangent space $T_{p_j}(\proj^m)$ are
$$\lambda_j-\lambda_0, \ldots, \widehat{\lambda_j-\lambda_j}  ,\ldots, \lambda_j-\lambda_{m}\, .$$

There is an induced $\mathsf{T}$-action on 
the moduli space 
$\overline{Q}_{g,n}(\proj^m,d)$.
The localization formula of \cite{GP} applied to the  virtual fundamental class 
$[\overline{Q}_{g,n}(\proj^m,d)]^{vir}$ will play a fundamental role our paper.
The $\mathsf{T}$-fixed loci are represented in terms of dual graphs,
and the contributions of the $\mathsf{T}$-fixed loci are given by
tautological classes. The formulas here are
standard, see \cite{KL,MOP}.

\subsection{Graphs}\label{grgr}
Let the genus $g$ and the number of markings $n$ for the moduli
space be in
the stable range
\begin{equation}\label{dmdm}
2g-2+n>0\, .
\end{equation}
We can organize the $\mathsf{T}$-fixed loci 
of $\overline{Q}_{g,n}(\proj^m,d)$
according to decorated graphs.
A {\em decorated graph} $\Gamma \in \mathsf{G}_{g,n}(\proj^m)$ consists 
of the data $(\mathsf{V}, \mathsf{E}, 
\mathsf{N}, \mathsf{g}, \mathsf{p} )$ where
\begin{enumerate}
 \item[(i)] $\mathsf{V}$ is the vertex set, 
 \item[(ii)] $\mathsf{E}$ is the edge set (including
 possible self-edges),
 \item[(iii)] $\mathsf{N} : \{1,2,..., n\} \rightarrow \mathsf{V}$ is the marking
 assignment,
   \item[(iv)] $\mathsf{g}: \mathsf{V} \rightarrow \ZZ_{\geq 0}$ is a genus
 assignment satisfying
 $$g=\sum_{v \in V} \mathsf{g}(v)+h^1(\Gamma)\, $$
and for which $(\mathsf{V},\mathsf{E},\mathsf{N},\mathsf{g})$ is stable graph{\footnote
{Corresponding to a stratum of the moduli space
of stable curves $\overline{M}_{g,n}$.}}, 
 \item[(v)] $\mathsf{p} : \mathsf{V} \rightarrow ({\PP ^m})^{\mathsf{T}}$ is an assignment of a $\mathsf{T}$-fixed point $\mathsf{p} (v)$ to each vertex $v \in \mathsf{V}$.
\end{enumerate}
The markings $\mathsf{L}=\{1,\ldots,n\}$ are often called {\em legs}.

To each decorated graph $\Gamma\in \mathsf{G}_{g,n}(\proj^m)$, we associate the set of fixed loci of  
$$\sum_{d\geq 0} \left[\overline{Q}_{g, n} (\proj^m, d)\right]^{\vir} q^d$$
with elements described as follows:
\begin{enumerate}
 
 \item[(a)] If $\{v_{i_1},\ldots,v_{i_k}\}=\{v\, |\, \mathsf{p}(v)=p_i\}$, then $f^{-1}(p_i)$ is a disjoint union of connected stable curves of genera $\mathsf{g}(v_{i_1}),\ldots, \mathsf{g}(v_{i_k})$ and finitely many points.
 
 
 \item[(b)] There is a bijective
  correspondence between the connected components of $C \setminus D$ and the set of edges and legs of $\Gamma$ respecting  vertex incidence where $C$ is domain curve and $D$ is union of all subcurves of $C$ which appear in (a). 
  
\end{enumerate}
We write the localization formula as
$$\sum_{d\geq 0} \left[\overline{Q}_{g, n} (\PP^m, d)\right]^{\vir} q^d =
\sum_{\Gamma\in \mathsf{G}_{g,n}(\PP^m)} \text{Cont}_\Gamma\, .$$
While $\mathsf{G}_{g,n}(\proj^m)$ is a finite set,
each contribution $\text{Cont}_\Gamma$ is
a series in $q$ obtained from
an infinite sum over all edge possibilities (b).

\subsection{Unstable graphs}
The moduli spaces of stable quotients $$\overline{Q}_{0,2}(\proj^m,d) \ \ \
\text{and} \ \ \ \overline{Q}_{1,0}(\proj^m,d)$$
for $d>0$
are the only{\footnote{The moduli spaces
$\overline{Q}_{0,0}(\proj^m,d)$ and
$\overline{Q}_{0,1}(\proj^m,d)$
are empty by the definition of a stable
quotient.}}
cases where
the pair $(g,n)$ does 
{\em not} satisfy the Deligne-Mumford stability condition 
\eqref{dmdm}. 

An appropriate set of decorated graphs $\mathsf{G}_{0,2}(\PP^m)$
 is easily defined: The graphs
$\Gamma \in \mathsf{G}_{0,2}(\PP^m)$ all have 2 vertices
connected by a single edge. Each vertex carries a marking.
All  of the  conditions (i)-(v)
of Section \ref{grgr} are satisfied
except for the stability of $(\mathsf{V},\mathsf{E}, \mathsf{N},\gamma)$.
The  localization formula holds,
\begin{eqnarray}\label{ddgg}
\sum_{d\geq 1} \left[\overline{Q}_{0, 2} (\PP^m, d)\right]^{\vir} q^d &=&
\sum_{\Gamma\in \mathsf{G}_{0,2}(\proj^m)} \text{Cont}_\Gamma\,,
\end{eqnarray}
For $\overline{Q}_{1,0}(\proj^m,d)$, the matter
is more problematic --- usually a marking
is introduced to break the symmetry.

\section{Basic correlators}\label{bcbcbc}
\subsection{Overview}
We review here basic generating series in $q$ which 
arise in  the genus 0 theory of quasimap invariants. The series
will play a fundamental role in
the calculations of Sections \ref{hgi} - \ref{hafp}
related
to the holomorphic anomaly equation for $K\proj^2$.

We fix a torus action $\mathsf{T}=(\CC^*)^3$ on $\PP^2$ with
weights{\footnote{The associated weights on
$H^0(\PP^2,\mathcal{O}_{\PP^2}(1))$ are
$\lambda_0,\lambda_1,\lambda_2$
and so match the conventions of
Section \ref{twth}.}}
$$-\lambda_0, -\lambda_1, -\lambda_2$$
on the vector space $\mathbb{C}^3$.
The $\T$-weight on the fiber over
$p_i$ of the canonical
bundle 
\begin{equation}\label{pqq9}
\mathcal{O}_{\PP^2}(-3) \rightarrow \PP^2
\end{equation}
is $-3\lambda_i$.
The toric Calabi-Yau $K\PP^2$
is the total space of \eqref{pqq9}.

Since the quasimap invariants are independent of $\lambda_i$, we are free
to use the  specialization 
\begin{equation}
\label{spez}
\lambda_1= \zeta \lambda_0\, , \ \ \
    \lambda_2= \zeta^2 \lambda_0
\end{equation}
where $\zeta$ is a primitive third root of unity.
Of course, we then have
\begin{eqnarray*}
    \lambda_0+\lambda_1+\lambda_2&=&0\, , \\
    \lambda_0 \lambda_1+\lambda_1 \lambda_2+\lambda_2 \lambda_0&=&0\, .
\end{eqnarray*}
The specialization \eqref{spez} will be imposed for our {\em entire}
study of $K\PP^2$.

\subsection{First correlators}
We require several correlators defined 
via  the Euler class of the obstruction bundle,
$$e(\text{Obs})= e(R^1\pi_* \mathsf{S}^3)\, ,$$
associated to the $K\PP^2$ geometry
on the moduli space $\overline{Q}_{g,n}(\PP^2,d)$.
The first two are obtained from
standard stable quotient invariants.
For $\gamma_i \in H^*_{\T} (\PP^2)$, let
\begin{eqnarray*}
    \Big \langle \gamma_1\psi^{a_1}, ...,\gamma_n\psi^{a_n} \Big\rangle_{g,n,d}^{\mathsf{SQ}}&=& 
    \int_{[\overline{Q}_{g,n}(\PP^2,d)]^{\vir}} 
    e(\text{Obs})\cdot
    \prod_{i=1}^n \text{ev}_i^*(\gamma_i)\psi_i^{a_i},\\
      \Big\langle \Big\langle \gamma _1\psi  ^{a_1} , ..., \gamma _n\psi  ^{a_n} \Big\rangle \Big\rangle _{0, n}^{\mathsf{SQ}} 
    &=& \sum _{d\geq 0}\sum_{k\geq 0} \frac{q^{d}}{k!}
 \Big\lan    \gamma _1\psi  ^{a_1} , ..., \gamma _n\psi  ^{a_n} , t, ..., t  \Big\ran_{0, n+k, d}^{\mathsf{SQ}} , 
 \end{eqnarray*}
 where, in the second series,
 $t \in H_{\T}^* (\PP^2)$.
 We will systematically use the quasimap notation $0+$
for stable quotients,
\begin{eqnarray*}
    \Big \langle \gamma_1\psi^{a_1}, ...,\gamma_n\psi^{a_n} \Big\rangle_{g,n,d}^{0+}&=&
    \Big \langle \gamma_1\psi^{a_1}, ...,\gamma_n\psi^{a_n} \Big\rangle_{g,n,d}^{\mathsf{SQ}} \\
      \Big\langle \Big\langle \gamma _1\psi  ^{a_1} , ..., \gamma _n\psi  ^{a_n} \Big\rangle \Big\rangle _{0, n}^{0+} 
    &=& 
    \Big\langle \Big\langle \gamma _1\psi  ^{a_1} , ..., \gamma _n\psi  ^{a_n} \Big\rangle \Big\rangle _{0, n}^{\mathsf{SQ}}\, . 
 \end{eqnarray*}

\subsection{Light markings}\label{lightm}
Moduli of quasimaps can be considered with $n$ ordinary (weight 1) markings and $k$ light 
(weight $\epsilon$) markings{\footnote{See Sections 2 and 5 of \cite{BigI}.}},
$$\overline{Q}^{0+,0+}_{g,n|k}(\PP^2,d)\, .$$
Let $\gamma_i \in H^*_{\T} (\PP^2)$ be
equivariant cohomology classes, and
let
$$\delta _j \in H^*_{\T} ([\mathbb{C}^3/\com^* ])$$ 
be classes on the stack quotient. 
Following the notation of \cite{KL}, 
we define series for the $K\PP^2$
geometry,

\begin{multline*}
    \Big\lan \gamma _1\psi  ^{a_1} , \ldots, \gamma _n\psi  ^{a_n} ;  \delta _1, \ldots, \delta _k \Big\ran _{g, n|k, d}^{0+, 0+}  = \\
\int _{[\overline{Q}^{0+, 0+}_{g, n|k} (\PP^2, d)]^{\vir}} 
e(\text{Obs})\cdot
\prod _{i=1}^n \text{ev}_i^*(\gamma _i)\psi _i ^{a_i} 
\cdot \prod _{j=1}^k \widehat{\text{ev}}_j ^* (\delta _j)\, , 
\end{multline*}
\begin{multline*}
\Big \langle \Big\langle \gamma _1\psi  ^{a_1} , \ldots, \gamma _n\psi  ^{a_n} \Big\rangle\Big\rangle _{0, n}^{0+, 0+} 
= \\ \sum _{d\geq 0}\sum_{k\geq 0} \frac{q^{d}}{k!}
 \Big\lan    \gamma _1\psi  ^{a_1} , \ldots, \gamma _n\psi  ^{a_n} ; {t}, \ldots, {t}  
 \Big\ran_{0, n|k, d}^{0+, 0+} \, ,
 \end{multline*}
 where, in the second series,
 ${t} \in H_{\T}^* ([\mathbb{C}^3/\com^* ])$.
 
 For each $\T$-fixed point $p_i\in \PP^2$, let 
 $$e_i= e(T_{p_i}(\PP^2))\cdot(-3\lambda_i)$$
 be the equivariant Euler class of
 the tangent space of $K\PP^2$ at $p_i$. Let
 $$ \phi_i=\frac{-3\lambda_i \prod_{j \ne i}(H-\lambda_j)}{e_i}, \ \ \phi^i=e_i \phi_i\ \ \in H^*_{\T}(\PP^2)\, $$ be cycle classes. Crucial for us are the series
 \begin{align*}
\mathds{S}_i(\gamma) & = e_i \Big\langle \Big\langle  \frac{\phi _i}{z-\psi} , \gamma 
\Big\rangle \Big\rangle _{0, 2}^{0+,0+}\ \ ,  \\
\mathds{V}_{ij}  & =  
\Big\langle \Big\langle  \frac{\phi _i}{x- \psi } ,  \frac{\phi _j}{y - \psi } 
\Big\rangle \Big\rangle  _{0, 2}^{0+,0+}  \ \ . 
 \end{align*}
Unstable degree 0 terms are included by hand in the
above formulas. For $\mathds{S}_i(\gamma)$, the unstable degree 0 term is
$\gamma|_{p_i}$. For $\mathds{V}_{ij}$, the unstable degree 0 term is
$\frac{\delta_{ij}}{e_i(x+y)}$.

 We also write $$\mathds{S}(\gamma)=\sum_{i=0}^2 {\phi_i} \mathds{S}_i(\gamma)\, .$$ 
The series $\mathds{S}_i$ and $\mathds{V}_{ij}$
 satisfy the basic relation
\begin{equation}  \label{wdvv} e_i\mathds{V}_{ij} (x, y)e_j   = 
\frac{\sum _{k=0}^2 \mathds{S}_i (\phi_k)|_{z=x} \, \mathds{S}_j(\phi ^k )|_{z=y}}{x+ y}\,   \end{equation}
 proven{\footnote{In Gromov-Witten
 theory, a parallel relation is
 obtained immediately from the
 WDDV equation and the string equation.
 Since the map forgetting a point
 is not always well-defined for
 quasimaps, a different argument 
 is needed here \cite{CKg}}} in \cite{CKg}.
 
 Associated to each $\T$-fixed point $p_i\in \PP^2$,
 there is a special $\T$-fixed point locus, 
\begin{equation}\label{ppqq}
\overline{Q}^{0+, 0+}_{0, k|m} (\PP^2,d) ^{\T, p_i} \subset
\overline{Q}^{0+, 0+}_{0, k|m}(\PP^2, d)\, ,
\end{equation}
where all markings lie on a single connected
genus 0 domain component contracted to $p_i$.
Let $\text{Nor}$ denote the equivariant
normal bundle 
of $Q^{0+, 0+}_{0, n|k} (\PP^2,d) ^{\T, p_i}$
with respect to the embedding \eqref{ppqq}.
Define 
\begin{multline*}
\Big\lan \gamma _1\psi  ^{a_1} , \ldots, \gamma _n\psi  ^{a_n} ;  \delta _1, ..., \delta _k \Big\ran _{0, n|k, d}^{0+, 0+, p_i}  
=\\
\int _{[\overline{Q}^{0+, 0+}_{0, n|k} (\PP^2, d) ^{\T, p_i}]} 
\frac{e(\text{Obs})}{e(\text{Nor})}\cdot
\prod _{i=1}^n \text{ev}_i^*(\gamma _i)\psi _i ^{a_i} \cdot
\prod _{j=1}^k \widehat{\text{ev}}_j ^* (\delta _j) \, ,
\end{multline*}

\begin{multline*}
  \Big\langle \Big\langle
  \gamma _1\psi  ^{a_1} ,\ldots, \gamma _n\psi  ^{a_n} 
  \Big\rangle \Big\rangle  _{0, n}^{0+, 0+, p_i} =\\
 \sum _{d\geq 0}\sum_{k\geq 0} \frac{q^{d}}{k!}
 \Big\lan    \gamma _1\psi  ^{a_1} , \ldots, \gamma _n\psi  ^{a_n} ; {t}, \ldots, {t}  \Big \ran_{0, n|k, \beta}^{0+, 0+, p_i} \, .
\end{multline*}

\subsection{Graph spaces and I-functions}
\subsubsection{Graph spaces}
The {big I-function} is defined in \cite{BigI}
via the geometry of weighted quasimap graph spaces. 
We briefly summarize the constructions of \cite{BigI}
in the special case of 
%
%
$(0+,0+)$-stability. The more general weightings
discussed in \cite{BigI} will not be
needed here.

As in Section \ref{lightm}, we consider the quotient
$$\com^3/\com^*$$
associated to $\PP^2$.
Following \cite{BigI},
there is a $(0+,0+)$-{\em stable quasimap graph space}
 \begin{equation}\label{xmmx}
     \mathsf{QG}_{g, n|k, d }^{0+,0+} ([\com^3/\com^*] ) \, .
 \end{equation}
A $\CC$-point of the graph space is described by data 
$$((C, {\bf x}, {\bf y}), (f,\varphi):C\lra [\CC^3/\CC^*]\times [\CC^2/\CC^*]).$$ 
By the definition of stability, $\varphi$ is a regular map to $$\PP^1=\CC^2/\!\!/\CC^*\, $$ of class $1$.
Hence, the domain curve $C$ has a distinguished irreducible component $C_0$ canonically isomorphic to $\PP ^1$ via $\varphi$. 
The {\em standard} $\CC ^*$-action, 
\begin{equation}\label{tt44}
t\cdot [\xi _0, \xi _1] = [t\xi _0, \xi _1], \, \, \text{ for } t\in \CC ^*, \, [\xi _0, \xi _1]\in \PP ^1,
\end{equation}
induces  a $\CC ^*$-action on the graph space.

The $\CC^*$-equivariant cohomology of a point is
a free algebra with generator $z$,
$$H^*_{\CC ^*} (\Spec (\CC )) = \QQ [z]\, .$$
Our convention is to define
$z$ as the $\CC^*$-equivariant first Chern class of the tangent line $T_0\PP ^1$ at $0\in\PP^1$ with respect to the
action \eqref{tt44},
$$z=c_1(T_0\PP ^1)\, .$$

The $\T$-action on $\com^3$ lifts to a $\T$-action on
the graph space \eqref{xmmx} which commutes with the
$\CC^*$-action obtained from the distinguished domain component.
As a result, we have a $\T\times \CC^*$-action on the graph space
and
 $\T\times\CC ^*$-equivariant evaluation morphisms
\begin{align}
\nonumber     &\text{ev}_i: \mathsf{QG}_{g, n|k, \beta }^{0+, 0+} ([\CC^3/\CC^*] ) \ra       \PP^2 ,  & i=1,\dots,n\, ,\\
\nonumber &\widehat{\text{ev}}_j:  \mathsf{QG}_{g, n|k, \beta }^{0+,0+} 
([\CC^3/\CC^*] ) \ra       [\CC^3/\CC^*] , & j=1,\dots,k\, .
\end{align}
Since a morphism $$f: C \ra [\CC^3/\CC^*]$$ 
is equivalent to the data of a 
principal $\G$-bundle $P$ on $C$ and a section $u$ of $P\times _{\CC^*} 
\CC^3$, 
there is a natural morphism $$C\ra E\CC^* \times _{\CC^*} \CC^3$$ and hence a pull-back map
 \[ f^*:  H^*_{\CC^*}([\CC^3/\CC^*])  \ra H^*(C). \] 
 The above construction applied
 to the universal curve over the moduli space 
 and the universal morphism to $[\CC^3/\CC^*]$ is $\T$-equivariant.
 Hence,
 we obtain a pull-back map 
 \[ \widehat{\text{ev}}_j^*: H^*_{\T}(\CC^3, \QQ)\otimes_\QQ \QQ[z] \ra H^*_{\T\times \CC^*} (\mathsf{QG}_{g, n|k, \beta }^{0+, 0+} 
 ([\CC^3/\CC^*] ) , \QQ ) \]  
 associated to the evaluation map $\widehat{\text{ev}}_j$.
 


\subsubsection{{\em{I}}-functions}
The description of the fixed loci for the $\CC^*$-action 
on $$\mathsf{QG}_{g, 0|k, d }^{0+,0+} 
([\CC^3/\CC^*] )$$
is parallel to the description in \cite[\S4.1]{CKg0} for the unweighted case. 
In particular, there is a distinguished
subset $\F_{k,d}$ of the $\CC ^*$-fixed locus for which all the markings and the entire curve class $d$ lie  over $0 \in \PP ^1$. The locus
$\F_{k,d}$ comes with
a natural {\it proper} evaluation map $ev_{\bullet}$ obtained
from the generic point of $\PP ^1$:
\[ \text{ev}_\bullet:  \F_{k,d} \ra \CC^3/\!\!/\CC^* =\PP^2 .  \]

We can explicitly write
\begin{equation*}\F_{k,d}\cong \F_d\times 0^k\subset \F_d\times (\PP^1)^k,
\end{equation*}
where $\F_d$ is the $\CC^*$-fixed locus in $\mathsf{QG}^{0+}_{0,0,d}([\CC^3/\CC^*])$ for which the class $d$ is concentrated over $0\in\PP^1$.  The locus $\F_d$ parameterizes
quasimaps of class $d$,
$$f:\PP^1\lra [\CC^3/\CC^*]\, ,$$ with a base-point of 
length $d$ at $0\in\PP^1$. The restriction of $f$ to $\PP^1\setminus\{0\}$ is a constant map to $\PP^2$ defining the evaluation
map $\text{ev}_\bullet$.

As in \cite{CK, CKg0,CKM}, we define the big $\mathds{I}$-function as the generating function for
the push-forward via $ev_\bullet$ of localization residue contributions of $\F_{k,d}$.
For ${\bf t}\in  
H^*_{\T} ([\CC^3/\CC^*], \QQ )\ot _{\QQ} \QQ[z]$, let
 \begin{align*} \mathrm{Res}_{\F_{k,d}}({\bf t}^k) &=
 \prod_{j=1}^k \widehat{\text{ev}}_j^*({\bf t})\, \cap\, \mathrm{Res}_{\F_{k,d}}[
 \mathsf{QG}_{g, 0|k, d }^{0+,0+} 
([\CC^3/\CC^*])
 ]^{\mathrm{vir}} \\
 &=\frac{\prod_{j=1}^k \widehat{\text{ev}}_j^*({\bf t})
 \cap [\F_{k,d}]^{\mathrm{vir}}}
 {\mathrm{e}(\text{Nor}^{\mathrm{vir}}_{\F_{k,d}})},
 \end{align*}
 where 
$\text{Nor}^{\mathrm{vir}}_{\F_{k,d}}$ is the virtual normal bundle.

\begin{Def}\label{Je}
 The big $\mathds{I}$-function for the $(0+,0+)$-stability condition,
 as a formal function in $\bf t$,
 is
\begin{equation*}
\mathds{I}
(q,{\bf t}, z)=\sum_{d\geq 0}\sum_{k\geq 0} \frac{q^d}{k!}
\text{\em ev}_{\bullet\, *}\Big(\mathrm{Res}_{\F_{k,d}}({\bf t}^k)
\Big)\, .
\end{equation*} 
\end{Def}

\subsubsection{Evaluations}


Let $\widetilde{H}\in H^*_\T([\CC^3/\CC^*])$ and $H\in H^*_\T(\PP^2)$
denote the respective hyperplane classes. The $\mathds{I}$-function
of Definition \ref{Je} is evaluated in \cite{BigI}.

\begin{Prop} For ${\bf t}=t\widetilde{H} \in H^*_{\T} ([\CC^3/\CC^*], \QQ)$,
 \begin{align}\label{I_Hyper_P} 
\dsI({t}) = \sum _{d=0}^{\infty} q^d e^{t(H+dz)/z} \frac{ \prod _{k=0}^{3d-1}  (-3H - kz)}{\prod^2_{i=0}\prod _{k=1}^d (H-\lambda_i+kz)} . \end{align}

\end{Prop}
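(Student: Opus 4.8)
The plan is to compute the localization residue contributions of the fixed loci $\F_{k,d}$ explicitly and then assemble the generating series $\dsI(t)$. First I would analyze the fixed locus $\F_d$ inside $\mathsf{QG}^{0+}_{0,0,d}([\CC^3/\CC^*])$: its $\CC$-points are quasimaps $f:\PP^1\to[\CC^3/\CC^*]$ of class $d$ with a length-$d$ base point concentrated at $0\in\PP^1$, so the underlying line bundle is $\oh_{\PP^1}(d)$ and the section $u$ vanishes to order $d$ at $0$. Such data is rigid up to the $\CC^*$-automorphisms of $\PP^1$ fixing $0$, so $\F_d$ is essentially a point (stacky), and the evaluation $\text{ev}_\bullet$ sends it to the constant value of $f$ on $\PP^1\setminus\{0\}$, which is an arbitrary point of $\PP^2$ — more precisely, $\F_d$ maps to $\PP^2$ and pushing forward will produce the class $e^{tH/z}$ times explicit hypergeometric factors.

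The key computation is the virtual normal bundle $\text{Nor}^{\vir}_{\F_{k,d}}$ and the obstruction bundle contribution. I would split the contributions into: (a) the deformations of the map $f$ over $\PP^1$, controlled by $H^\bullet(\PP^1, f^*T_{[\CC^3/\CC^*]})$, which after the standard Euler-class bookkeeping yields the denominator $\prod_{i=0}^2\prod_{k=1}^d(H-\lambda_i+kz)$ (the $z$ being the $\CC^*$-weight $c_1(T_0\PP^1)$ and $H-\lambda_i$ the $\T$-weights on the fiber $\CC^3$); (b) the obstruction bundle $e(\text{Obs})=e(R^1\pi_*\mathsf{S}^3)$ for the $K\PP^2$ twist, which over $\F_d$ becomes $H^1(\PP^1, \oh(-3d))$ with its $\T\times\CC^*$-weights, contributing the numerator $\prod_{k=0}^{3d-1}(-3H-kz)$; and (c) the $e^{t(H+dz)/z}$ factor coming from the $k$ light markings $\widehat{\text{ev}}_j^*(t\widetilde{H})$ restricted to $\F_{k,d}\cong\F_d\times 0^k$ together with the $\tfrac{1}{k!}q^d$ weighting and the standard resummation $\sum_k \tfrac{(\cdot)^k}{k!}=\exp(\cdot)$, where the extra $dz$ in the exponent records the contribution of the base-point divisor of length $d$ at $0$ to $\widehat{\text{ev}}^*\widetilde{H}$.

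I expect the main obstacle to be the careful determination of the $\CC^*$-equivariant weights on the pieces of the virtual normal bundle and obstruction bundle — in particular, matching the integer shifts $k=1,\dots,d$ versus $k=0,\dots,3d-1$ and the sign conventions, and correctly accounting for the stacky automorphisms of the base-pointed quasimap. This is precisely the kind of bookkeeping where off-by-one errors in the range of $k$ or the placement of $z$ creep in. The cleanest route is to cite the parallel computation in \cite{BigI} (and the unweighted analogue in \cite[\S4.1]{CKg0}), noting that the $(0+,0+)$-stability graph space fixed-locus analysis is identical to the unweighted case except for the light-marking factor, and that the obstruction twist by $\mathsf{S}^3$ simply inserts the equivariant Euler class of $R^1\pi_*\oh_{\PP^1}(-3d)$; then the stated formula \eqref{I_Hyper_P} follows by direct substitution. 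Once the residue formula for a single $\F_{k,d}$ is in hand, summing over $d$ and $k$ and applying Definition \ref{Je} gives the result with no further difficulty.
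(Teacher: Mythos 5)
The paper offers no argument for this Proposition at all: formula \eqref{I_Hyper_P} is simply quoted, with the sentence before the statement pointing to \cite{BigI}. So your fallback --- cite \cite{BigI} (and the unweighted fixed-locus analysis of \cite[\S 4.1]{CKg0}) and substitute --- is literally the paper's route, and your localization outline (the loci $\F_{k,d}$, the virtual normal bundle producing the denominator $\prod_{i}\prod_{k=1}^{d}(H-\lambda_i+kz)$, the light markings resumming to $e^{t(H+dz)/z}$) is the computation that \cite{BigI} carries out. In that sense the approach is fine.

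Two points in your sketch are nevertheless off as stated. First, $\F_d$ is not ``essentially a point'': the constant value of the quasimap on $\PP^1\setminus\{0\}$ is arbitrary, so $\F_d\cong\PP^2$ with $\text{ev}_\bullet$ the identification (you half-correct this later). Second, and more substantively, the numerator cannot all come from $e\big(H^1(\PP^1,\oh_{\PP^1}(-3d))\big)$: that group has dimension $3d-1$, with weights $-3H-kz$ for $k=1,\dots,3d-1$, while \eqref{I_Hyper_P} has $3d$ factors. The extra $k=0$ factor $-3H$ has a different origin --- it is the normalization of the local theory at the evaluation point, i.e.\ the factor $e(\oh_{\PP^2}(-3))$ that accompanies the pushforward conventions (equivalently, it appears automatically if one computes the quasimap $I$-function of the total space $K\PP^2=[\CC^4/\CC^*]$, where the fixed locus sits in the zero section). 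This is exactly the off-by-one you warned about, and the sketch as written falls into it. That the $3d$-factor normalization is the correct one can be checked against the paper's own data: the Picard--Fuchs equation \eqref{PF} and the mirror-map coefficient $3(-1)^d(3d-1)!/(d!)^3$ in $I_1^{K\PP^2}$ both force the presence of the $k=0$ factor. None of this is fatal, since you ultimately defer the weight bookkeeping to \cite{BigI}, but the geometric attribution of the numerator should be corrected if you intend the direct computation to stand on its own.
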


We return now to the functions  $\mathds{S}_i(\gamma)$
defined in Section \ref{lightm}.
Using Birkhoff factorization, an evaluation of
the series $\mathds{S}(H^j)$ can be obtained from the $\dsI$-function, see \cite{KL}:
\begin{align}
\nonumber\mathds{S}({1}) & = \mathds{I} \, , \\
\label{S1}\mathds{S}(H) & = \frac{  z\frac{d}{dt} \mathds{S}({1})}{  z\frac{d}{dt} \mathds{S}({1})|_{t=0,H=1,z=\infty}} \, , \\
\nonumber \mathds{S}(H^2) & = \frac{ z\frac{d}{dt} \mathds{S}(H)}{ z\frac{d}{dt} \mathds{S}(H)|_{t=0,H=1,z=\infty}}\, .
\end{align}
For a  series $F\in \CC[[\frac{1}{z}]]$, the specialization
$F|_{z=\infty}$ denotes constant term of $F$ with respect to $\frac{1}{z}$.

\subsubsection{Further calculations}\label{furcalc}
Define small $I$-function 
$$\overline{\mathds{I}}(q)
\in H^*_{\T}(\PP^2,\QQ)[[q]]$$ by the restriction
\begin{align*}
    \overline{\mathds{I}}(q)
    =\mathds{I}(q,{t})|_{t=0}\, .
\end{align*}
Define differential operators
$$\DD = q\frac{d}{dq}\, , \ \ \ M = H+ z \DD.$$
Applying $z\frac{d}{dt}$ to $\mathds{I}$ and then restricting
to $t=0$ has same effect as applying $M$ to 
$\overline{\mathds{I}}$
 \begin{align*}
     \left[\left(z\frac{d}{dt}\right)^k \mathds{I}\right]\Big|_{t=0} = M^k \, 
     \overline{\mathds{I}}\, .
 \end{align*}
The function 
$\overline{\mathds{I}}$
satisfies following Picard-Fuchs equation
\begin{align}
\label{PF}\Big(M^3-\lambda^3_0+3qM(3M+z)(3M+2z)\Big) 
\overline{\mathds{I}}=0
\end{align}
implied by the Picard-Fuchs equation for $\mathds{I}$,
\begin{align*}
    \left(\left(z\frac{d}{dt}\right)^3-\lambda^3_0+3q\left(z\frac{d}{dt}\right)\left(3\left(z\frac{d}{dt}\right)+z\right)
    \left( 3\left( z\frac{d}{dt}\right)+2z\right)\right)\mathds{I}=0\, .
\end{align*}

The restriction
$\overline{\mathds{I}}|_{H=\lambda_i}$
admits following asymptotic form
\begin{align}
 \label{assym}
 \overline{\mathds{I}}|_{H=\lambda_i}
 = e^{\mu\lambda_i/z}\left( R_0+R_1 \left(\frac{z}{\lambda_i}\right)+R_2 \left(\frac{z}{\lambda_i}\right)^2+\ldots\right)
\end{align}
with series 
$\mu,R_k \in \CC[[q]]$.

A derivation of \eqref{assym} is obtained in \cite{ZaZi} via  
the Picard-Fuchs equation \eqref{PF} for
$\overline{\mathds{I}}|_{H=\lambda_i}$.
The series
$\mu$ and 
$R_k$ are found by solving differential equations obtained from the coefficient of $z^k$. 
For example, 
\begin{eqnarray*}
    1+ \DD\mu&=& L\, , \\
    R_0&=&1\, , \\
    R_1&=& \frac{1}{18}(1-L^2)\, , \\ 
    R_2&=&\frac{1}{648}(1-24L-2L^2+25L^4)\, , 
\end{eqnarray*}
where $L(q) = (1+27q)^{-1/3}$. The specialization \eqref{spez}
is used for these results.

 In \cite{ZaZi}, the authors study the $I$-functions{\footnote{The $I$-function is $\mathcal{F}$ in the notation of
 \cite{ZaZi}.}} related to Calabi-Yau hypersurfaces in $\PP^m$. 
 The local geometry $K\PP^2$ here has a slightly different 
 $I$-function from the function $\mathcal{F}$ in \cite{ZaZi} for the cubic in
 $\PP^2$. More precisely, \begin{align*}
     \mathcal{F}_{-1}(q) = 
     \overline{\mathds{I}}(-q)\, .
 \end{align*} We can either apply the methods of \cite{ZaZi} to our $I$-functions, or we can just take 
 account of the difference between the two $I$-functions (which amounts to a 
 shifting of indices). The latter is easier.

Define the series $C_1$ and $C_2$ by the equations
\begin{align}
C_1 & = z\frac{d}{dt} \mathds{S}({1})|_{z=\infty,t=0,H=1}\, ,  \label{y999}\\
C_2 & = z\frac{d}{dt} \mathds{S}(H)|_{z=\infty,t=0,H=1}\, . \nonumber\\
C_0 & = z\frac{d}{dt} \mathds{S}(H^2)|_{z=\infty,t=0,H=1}\, . \nonumber
\end{align}
The above series are equal (up to sign) to $I_0$, $I_1$ and $I_2$ respectively in the notation of \cite{ZaZi} for the cubic in $\PP^2$.
The following relations were proven in \cite{ZaZi},
\begin{align}\label{c1c2l}
C_0 C_1 C_2 &= (1+27q)^{-1}\, ,\\ \nonumber
C_0 &= C_1\, .
\end{align}


From the equations \eqref{S1} and \eqref{assym}, we can show the series $$\overline{\mathds{S}}_i({1})=\overline{\mathds{S}}({1})|_{H=\lambda_i}\,, \ \ \overline{\mathds{S}}_i(H)=
\overline{\mathds{S}}(H)|_{H=\lambda_i}\, , \ \ \overline{\mathds{S}}_i(H^2)=\overline{\mathds{S}}(H^2)|_{H=\lambda_i}$$ 
have the following asymptotic expansions:
\begin{align}\nonumber
\overline{\mathds{S}}_i({1}) & = e^{\frac{\mu\lambda_i}{z}} \Big(R_{00}+R_{01}\big(\frac{z}{\lambda_i}\big)+R_{02} \big(\frac{z}{\lambda_i}\big)^2+\ldots\Big) \, ,\\  \label{VS}
\overline{\mathds{S}}_i(H) & = e^{\frac{\mu\lambda_i}{z}} \frac{L\lambda_i}{C_1} \Big(R_{10}+R_{11}\big(\frac{z}{\lambda_i}\big)+R_{12} \big(\frac{z}{\lambda_i}\big)^2+\ldots\Big)\, ,\\ \nonumber
\overline{\mathds{S}}_i(H^2) & = e^{\frac{\mu\lambda_i}{z}} \frac{L^2\lambda_i^2}{C_1 C_2} \Big(R_{20}+R_{21}\big(\frac{z}{\lambda_i}\big)+R_{22} \big(\frac{z}{\lambda_i}\big)^2+\ldots)\, .
 \end{align}
We follow here the normalization of \cite{ZaZi}. Note 
\begin{align*}
    R_{0k}=R_{k}.
\end{align*}
As in \cite[Theorem 4]{ZaZi}, we obtain the following  constraints.
 
\begin{Prop}{\em (Zagier-Zinger \cite{ZaZi})}\label{RPoly}
 For all $k\geq 0$, we have
     $$R_k \in \CC[L^{\pm1}]\, .$$
\end{Prop}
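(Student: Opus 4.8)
The plan is to follow the method of Zagier--Zinger \cite{ZaZi}: extract a recursion for the series $R_k$ from the Picard--Fuchs equation \eqref{PF} and prove the statement by induction on $k$, the base case being $R_0=1$. First I would restrict \eqref{PF} to $H=\lambda_i$; under the specialization \eqref{spez} one has $\lambda_i^3=\lambda_0^3$, so $\overline{\mathds{I}}|_{H=\lambda_i}$ is annihilated by $M^3-\lambda_i^3+3qM(3M+z)(3M+2z)$ with $M=\lambda_i+z\DD$. Substituting the ansatz \eqref{assym}, $\overline{\mathds{I}}|_{H=\lambda_i}=e^{\mu\lambda_i/z}\Phi$ with $\Phi=\sum_{k\ge 0}R_k(z/\lambda_i)^k$, and conjugating by $e^{\mu\lambda_i/z}$ turns $M$ into $\lambda_i(1+\DD\mu)+z\DD$. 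Writing $w=z/\lambda_i$ and dividing by $\lambda_i^3$, the coefficient of $w^0$ forces $(1+27q)(1+\DD\mu)^3=1$, i.e. $1+\DD\mu=L$; with this, $M/\lambda_i$ becomes the operator $N=L+w\DD$, and (after multiplying by $L^3$ and using $27qL^3=1-L^3$, $6qL^3=\tfrac29(1-L^3)$) the conjugated equation reads
\[
N^3\Phi-L^3\Phi+(1-L^3)\,wN^2\Phi+\tfrac29(1-L^3)\,w^2N\Phi=0 .
\]

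The structural fact that drives the induction is that $\DD=q\tfrac{d}{dq}$ preserves $\CC[L^{\pm1}]$: since $q=\tfrac1{27}(L^{-3}-1)$ one computes $\DD L=-9qL^4=\tfrac13(L^4-L)\in\CC[L^{\pm1}]$, so $\DD=\tfrac13(L^4-L)\tfrac{d}{dL}$ maps $\CC[L^{\pm1}]$ (and even $\CC[L]$) into itself. I would then take the coefficient of $w^{k+1}$ in the displayed equation. A short computation shows the $R_{k+1}$ contributions cancel ($L^3R_{k+1}$ from $N^3\Phi$ against $-L^3R_{k+1}$ from $-L^3\Phi$) and that the undifferentiated $R_k$ contributions cancel too ($3L(\DD L)R_k=(L^5-L^2)R_k$ against $(1-L^3)L^2R_k$), leaving a relation of the shape
\[
3L^2\,\DD R_k=P_k(R_{k-1},R_{k-2}),\qquad R_0=1,
\]
where $P_k$ is a $\CC[L^{\pm1}]$-linear combination of $R_{k-1},R_{k-2}$ and their iterated $\DD$-derivatives (for instance $P_1=-\tfrac19L^4(L^3-1)$, giving $\DD R_1=-\tfrac1{27}(L^5-L^2)$, consistent with $R_1=\tfrac1{18}(1-L^2)$). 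This is the recursion.

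Assume inductively $R_0,\dots,R_{k-1}\in\CC[L^{\pm1}]$. Then $P_k\in\CC[L^{\pm1}]$, hence $\DD R_k\in\CC[L^{\pm1}]$. To pass from $\DD R_k$ to $R_k$ one must integrate: since $R_k\in\CC[[q]]$, the series $\DD R_k=qR_k'$ vanishes at $q=0$, so viewed as an element of $\CC[L^{\pm1}]$ it vanishes at $L=1$ and, because $q\mapsto L$ is a triple cover with fibre $\{1,\zeta,\zeta^2\}$ over $q=0$, also at the other cube roots of unity; therefore $\DD R_k=(L^3-1)\tilde g_k$ with $\tilde g_k\in\CC[L^{\pm1}]$, and $\tfrac{dR_k}{dL}=\DD R_k/\DD L=3\tilde g_k/L$. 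Integrating in $L$ and fixing the constant by the normalization $R_k|_{q=0}=\delta_{k,0}$ (which comes from $\overline{\mathds{I}}|_{q=0}=1$ and $\mu|_{q=0}=0$) recovers $R_k$, and $R_k\in\CC[L^{\pm1}]$ exactly when $\tilde g_k$ has vanishing constant term, i.e. when no $\log L$ appears.

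That last point is the main obstacle. The cleanest way I would handle it is to strengthen the induction hypothesis to "$R_k\in\CC[L]$ with $\deg_L R_k\le 2k$" --- consistent with $R_0=1$, $R_1=\tfrac1{18}(1-L^2)$, $R_2=\tfrac1{648}(1-24L-2L^2+25L^4)$ --- and then check, by a bookkeeping of degrees through the recursion (noting that every coefficient occurring in $P_k$, namely $3L^2$, $1-L^3$, $3L\,\DD L=L^5-L^2$, $\DD^2L$, $\DD^3L,\ldots$, is an honest polynomial in $L$, so negative powers are never introduced), that $P_k\in L^2\cdot\CC[L]$ with $\deg P_k\le 2k+5$; then $\DD R_k\in\CC[L]$ with $\deg\le 2k+3$ and hence $R_k\in\CC[L]$ with $\deg\le 2k$, and in particular $R_k\in\CC[L^{\pm1}]$. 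Alternatively, and this is the route the paper indicates, one simply invokes \cite[Theorem~4]{ZaZi} after the substitution $q\mapsto -q$ recorded in the text ($\mathcal{F}_{-1}(q)=\overline{\mathds{I}}(-q)$), since the recursion above is precisely the one analyzed there.
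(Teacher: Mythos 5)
Your setup is sound: conjugating the Picard--Fuchs equation by $e^{\mu\lambda_i/z}$, the coefficient of $w^0$ giving $1+\DD\mu=L$, the cancellation of the $R_{k+1}$-terms and of the undifferentiated $R_k$-terms (indeed $3L\,\DD L+(1-L^3)L^2=0$), and the resulting recursion $3L^2\,\DD R_k=P_k(R_{k-1},R_{k-2})$ all check out, and your $P_1$ is consistent with $R_1=\tfrac1{18}(1-L^2)$. Note also that the paper gives no proof of this Proposition: it is attributed to Zagier--Zinger and justified only by the comparison $\mathcal{F}_{-1}(q)=\overline{\mathds{I}}(-q)$, i.e.\ by invoking \cite[Theorem 4]{ZaZi} after $q\mapsto -q$, which is exactly your closing fallback; that route is legitimate and is what the paper does.

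Your primary, self-contained argument, however, has a genuine gap at the integration step --- precisely the step you flag as the main obstacle. First, the ``triple cover'' reasoning is not valid: a Laurent polynomial whose $q$-expansion vanishes at $q=0$ vanishes at $L=1$, but nothing forces vanishing at the other cube roots of unity (consider $L-1$). The divisibility $(L^3-1)\mid \DD R_k$ is in fact true, but for a structural reason you did not use: every term of $P_k$ carries either an explicit factor $(1-L^3)$ or a $\DD$-derivative of an element of $\CC[L^{\pm1}]$, and $\DD f=\tfrac13L(L^3-1)\frac{df}{dL}$ is always divisible by $L^3-1$. Second, and more seriously, the proposed repair by degree bookkeeping does not close the induction: from $P_k\in L^2\,\CC[L]$ with $\deg P_k\le 2k+5$ you only get $\DD R_k\in\CC[L]$ of bounded degree, while the inference ``hence $R_k\in\CC[L]$'' needs $L(L^3-1)\mid \DD R_k$, equivalently $L^3(L^3-1)\mid P_k$. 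The extra divisibility by $L^3$ --- equivalently the vanishing of the constant term of $\tilde g_k$, i.e.\ the absence of a $\log L$ upon integrating $3\tilde g_k/L$ --- is exactly the residue condition that degree bounds cannot see, and it is the nontrivial content of \cite[Theorem 4]{ZaZi}. So as written the direct proof is incomplete: either supply an independent argument for this residue vanishing, or rely on the citation (with the $q\mapsto -q$ shift), which is the paper's own treatment.
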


Similarly, we also obtain results for $\overline{\mathds{S}}({H})|_{H=\lambda_i}$
and $\overline{\mathds{S}}({H^2})|_{H=\lambda_i}$.
\begin{Lemma}\label{RPoly2} For all $k\geq 0$, we have 
 \begin{align*}
     &R_{1k} \in \CC[L^{\pm1}]\,,\\
     &R_{2k} = Q_{2k} - \frac{R_{1\, k-1}}{L} X\, ,
 \end{align*}
 with $Q_{2k}\in \CC[L^{\pm1}]$ and $X =\frac{\DD C_1}{C_1}$.
\end{Lemma}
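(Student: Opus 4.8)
The plan is to propagate the asymptotic expansion \eqref{assym} of $\overline{\mathds I}|_{H=\lambda_i}$ through the recursions \eqref{S1}, keeping careful track of where the (non-polynomial) normalizations $C_1,C_2$ get differentiated. First I would linearize the operator $M = H + z\DD$ along the fixed locus $H=\lambda_i$: after restriction $M$ becomes $\widehat M = \lambda_i + z\DD$, and conjugating by the asymptotic exponential and using $1+\DD\mu = L$ gives the clean shape
$$e^{-\mu\lambda_i/z}\,\widehat M\,e^{\mu\lambda_i/z}\;=\;\lambda_i L + z\DD\;=:\;D\,.$$
Concretely, for a series $\Theta = \sum_{k\geq 0}\Theta_k (z/\lambda_i)^k$ one has $D\Theta = \lambda_i\sum_{k\geq 0}\bigl(L\Theta_k + \DD\Theta_{k-1}\bigr)(z/\lambda_i)^k$, with the convention $\Theta_{-1}=0$. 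I also record that $\DD L = \tfrac13(L^4-L)$ (from $L=(1+27q)^{-1/3}$ and $q = \tfrac1{27}(L^{-3}-1)$), so that $\DD$ preserves $\CC[L^{\pm1}]$.

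Next, writing $\overline{\mathds S}_i({1}) = \overline{\mathds I}|_{H=\lambda_i} = e^{\mu\lambda_i/z}\Phi$ with $\Phi = \sum_k R_k (z/\lambda_i)^k$, the first recursion of \eqref{S1} read at $t=0$ (using $\mathds S({1}) = \mathds I$ and $[(z\tfrac{d}{dt})\mathds I]|_{t=0}=M\overline{\mathds I}$) gives $\overline{\mathds S}_i(H) = \tfrac1{C_1}e^{\mu\lambda_i/z}D\Phi$. Comparing with the normalization in \eqref{VS} yields the recursion $R_{1k} = R_k + \tfrac1L\DD R_{k-1}$. Since $R_k\in\CC[L^{\pm1}]$ by Proposition \ref{RPoly} and $\DD$ preserves $\CC[L^{\pm1}]$, this immediately gives $R_{1k}\in\CC[L^{\pm1}]$, the first assertion.

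For the second assertion I would iterate once more. From $\overline{\mathds S}(H) = \tfrac1{C_1}M\overline{\mathds I}$ and $\DD(1/C_1) = -X/C_1$ with $X = \DD C_1/C_1$, the second recursion of \eqref{S1} at $t=0$ becomes
$$\overline{\mathds S}(H^2)\;=\;\tfrac1{C_2}M\,\overline{\mathds S}(H)\;=\;\tfrac1{C_1C_2}\bigl(M^2 - zX\,M\bigr)\overline{\mathds I}\,,$$
and this is exactly where the single factor of $X$ is introduced — the first step differentiates only $\overline{\mathds I}$, whereas the second must also differentiate the $1/C_1$ sitting inside $\overline{\mathds S}(H)$. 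Restricting to $H=\lambda_i$ and conjugating as above,
$$\overline{\mathds S}_i(H^2)\;=\;\tfrac1{C_1C_2}e^{\mu\lambda_i/z}\bigl(D^2\Phi - zX\,D\Phi\bigr)\,.$$
Setting $S_k = L R_{1k}$ (so $D\Phi = \lambda_i\sum_k S_k(z/\lambda_i)^k$), extracting the coefficient of $(z/\lambda_i)^k$ from the identification of the last display with \eqref{VS} (the prefactor $C_1C_2$ and the exponential cancel) gives $L^2 R_{2k} = L S_k + \DD S_{k-1} - X\,S_{k-1}$, and substituting $S_k = L R_{1k}$ rearranges to
$$R_{2k}\;=\;\Bigl(R_{1k} + \tfrac{\DD L}{L^2}R_{1,k-1} + \tfrac1L\DD R_{1,k-1}\Bigr)\;-\;\frac{R_{1,k-1}}{L}\,X\,.$$
By the first part of the lemma together with the stability of $\CC[L^{\pm1}]$ under $\DD$, the parenthesized term is the required $Q_{2k}\in\CC[L^{\pm1}]$.

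The hard part will not be the algebra but making sure that exactly one power of $X$ survives. One must check that the final normalization $C_2$ is applied \emph{after} $M$, so that $\DD C_2/C_2$ never appears; that $\DD C_1/C_1$ enters only linearly; and that the leftover $C_1C_2$ cancels against the prefactor in \eqref{VS}. The companion bookkeeping point is that the $z\DD$ part of $M$ interacts with the essential singularity $e^{\mu\lambda_i/z}$ only through the conjugation identity $e^{-\mu\lambda_i/z}\widehat M e^{\mu\lambda_i/z} = \lambda_i L + z\DD$, which is what keeps the expansions in the form \eqref{VS} after each application of $M$.
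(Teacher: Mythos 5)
Your proposal is correct and follows essentially the same route as the paper: substituting the asymptotic expansions \eqref{assym}, \eqref{VS} into the Birkhoff factorization \eqref{S1} via the conjugation $e^{-\mu\lambda_i/z}(\lambda_i+z\DD)e^{\mu\lambda_i/z}=\lambda_i L+z\DD$, which yields precisely the recursions $R_{1k}=R_k+\frac{\DD R_{k-1}}{L}$ and $R_{2k}=R_{1k}+\frac{\DD R_{1,k-1}}{L}+\bigl(\frac{\DD L}{L^2}-\frac{X}{L}\bigr)R_{1,k-1}$ recorded in Lemma \ref{R}, from which the lemma follows by Proposition \ref{RPoly} and the stability of $\CC[L^{\pm1}]$ under $\DD$.
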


\subsection{Determining $\DD X$}

Using Birkhoff factorization of \eqref{S1} further, we have

\begin{align}\label{S3}
 \mathds{S}(H^3) & = \frac{ z\frac{d}{dt} \mathds{S}(H^2)}{ z\frac{d}{dt} \mathds{S}(H^2)|_{t=0,H=1,z=\infty}}.
\end{align}
Since $H^3=\lambda_0^3$ with the specialization \eqref{spez}, we have
\begin{align*}
    \mathds{S}(1)=\frac{1}{\lambda_0^3}\mathds{S}(H^3).
\end{align*}
From \eqref{S1} and \eqref{S3}, we obtain the following
result.

\begin{Lemma}\label{R} We have
 \begin{align*}
    R_{1\,p+1}&=R_{0\,p+1}+\frac{\DD R_{0p}}{L} ,\ \\
    R_{2\,p+1}&=R_{1\,p+1}+\frac{\DD R_{1p}}{L}+\left(\frac{\DD L}{L^2}-\frac{X}{L}\right)R_{1p},\ \\
    R_{0\,p+1}&=R_{2\,p+1}+\frac{\DD R_{2p}}{L}-\left(\frac{\DD L}{L^2}-\frac{X}{L}\right)R_{2p}.
\end{align*}

\end{Lemma}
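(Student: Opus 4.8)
The plan is to derive all three identities in Lemma \ref{R} from the Birkhoff factorization relations \eqref{S1} and \eqref{S3} together with the asymptotic expansions \eqref{VS}, by tracking the coefficient of each power of $z/\lambda_i$ after applying the operator $z\frac{d}{dt}$ and renormalizing. The key mechanism is that applying $z\frac{d}{dt}$ to $\overline{\mathds S}_i(H^j)$ acts on the asymptotic series $e^{\mu\lambda_i/z}\,(\text{prefactor})\,\sum_k R_{jk}(z/\lambda_i)^k$ by differentiating the prefactor and the $R_{jk}$'s (contributing $\DD\mu$, $\DD(\text{prefactor})$, $\DD R_{jk}$ terms) and by multiplying the exponential's argument, which shifts indices. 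Since $1+\DD\mu = L$, the leading effect of $z\frac{d}{dt}$ is multiplication by $L\lambda_i/z$ up to lower-order corrections; dividing by the $z=\infty$ normalization constant (which is exactly this leading coefficient) produces the recursions relating $R_{j'\,p+1}$ to $R_{j\,p+1}$, $\DD R_{jp}$, and $R_{jp}$.

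Concretely, first I would write out $z\frac{d}{dt}\overline{\mathds S}_i(1)$ using \eqref{VS}: the exponential contributes a factor $(1+\DD\mu)\lambda_i/z = L\lambda_i/z$, and collecting the coefficient of $(z/\lambda_i)^{p+1}$ in the normalized quotient gives $R_{1\,p+1} = R_{0\,p+1} + \frac{\DD R_{0p}}{L}$, using the relation $\overline{\mathds S}_i(H) = \frac{z\frac{d}{dt}\overline{\mathds S}_i(1)}{(\cdots)|_{z=\infty}}$ from \eqref{S1} and the prefactor $\frac{L\lambda_i}{C_1}$ appearing in the expansion of $\overline{\mathds S}_i(H)$. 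Second, I would repeat this for $z\frac{d}{dt}\overline{\mathds S}_i(H)$: here the prefactor $\frac{L\lambda_i}{C_1}$ itself gets differentiated, contributing a term $\DD\log(L/C_1) = \frac{\DD L}{L} - X$ (recalling $X = \frac{\DD C_1}{C_1}$), which after dividing by $L$ in the normalization yields the extra term $\left(\frac{\DD L}{L^2} - \frac{X}{L}\right)R_{1p}$ in the formula for $R_{2\,p+1}$. Third, the same computation applied to $z\frac{d}{dt}\overline{\mathds S}_i(H^2)$ via \eqref{S3}, combined with the identity $\mathds S(1) = \frac{1}{\lambda_0^3}\mathds S(H^3)$ coming from $H^3 = \lambda_0^3$ under the specialization \eqref{spez}, closes the cycle and produces the third relation $R_{0\,p+1} = R_{2\,p+1} + \frac{\DD R_{2p}}{L} - \left(\frac{\DD L}{L^2} - \frac{X}{L}\right)R_{2p}$; note the sign flip on the last term reflects that the prefactor ratio from $\overline{\mathds S}_i(H^2)$ to $\overline{\mathds S}_i(H^3)\sim \overline{\mathds S}_i(1)$ (up to the constant $\lambda_0^3$) contributes $-\DD\log(L/C_1)$ rather than $+\DD\log(L/C_1)$, because the $L^2/(C_1C_2)$ prefactor is being divided out.

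The main obstacle I anticipate is bookkeeping the normalization denominators $\left(z\frac{d}{dt}\mathds S(H^j)\right)\big|_{t=0,H=1,z=\infty}$ correctly and confirming they match the prefactors $1$, $\frac{L\lambda_i}{C_1}$, $\frac{L^2\lambda_i^2}{C_1C_2}$ in \eqref{VS} — i.e., checking that the constants $C_1, C_2$ defined in \eqref{y999} are exactly the quantities that appear when one renormalizes, so that the index-shifted recursion has precisely the stated coefficients with no leftover factors. One must also be careful that the operator $z\frac{d}{dt}$ restricted to $t=0$ equals $M = H + z\DD$ acting on $\overline{\mathds S}_i$ after substituting $H = \lambda_i$, so that the "$H$" part contributes the $\lambda_i$ that combines with $\DD\mu$ to give $L$; this is where the specialization \eqref{spez} and the relation $1+\DD\mu = L$ are used. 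Once the dictionary between the Birkhoff normalizations and the $C_j$'s is pinned down, extracting the coefficient of $(z/\lambda_i)^{p+1}$ on both sides is a routine term-by-term comparison.
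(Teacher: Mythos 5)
Your proposal is correct and is essentially the paper's own (largely implicit) derivation: substitute the asymptotic expansions \eqref{VS} into the Birkhoff factorization relations \eqref{S1} and \eqref{S3}, use that $z\frac{d}{dt}$ at $t=0$ acts as $M=H+z\DD$ together with $1+\DD\mu=L$, close the cycle via $\mathds{S}(1)=\lambda_0^{-3}\mathds{S}(H^3)$, and compare coefficients of $(z/\lambda_i)^{p+1}$. The only point worth pinning down more explicitly is that the sign flip in the third relation comes from \eqref{c1c2l}, i.e. $C_0C_1C_2=L^3$ and $C_0=C_1$ give $L^2/(C_1C_2)=C_1/L$, so the log-derivative of the prefactor of $\overline{\mathds{S}}_i(H^2)$ is $-(\frac{\DD L}{L}-X)$ and the prefactors close up consistently with $\lambda_0^3\overline{\mathds{S}}_i(1)$.
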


After setting $p=1$ in Lemma \ref{R}, we find
\begin{equation}\label{drule}
X^2-(L^3-1)X+\DD X-\frac{2}{9}(L^3-1)=0\, .
\end{equation}
 By the above result, the differential ring 
 \begin{equation}\label{ddd333}
 \CC[L^{\pm1}][X,\DD X,\DD\DD X,\ldots]
 \end{equation}
 is just the polynomial ring $\CC[L^{\pm1}][X]$.

We can find the value of $R_{2k}$ in terms of $R_k$ using Lemma \ref{R}.

\begin{align*}
 R_{2\,p+2}=R_{p+2}+2 \frac{\DD R_{p+1}}{L}+\frac{\DD L}{L^2} R_{p+1}+\frac{\DD^2 R_p}{L^2}-\left(\frac{R_{p+1}}{L}+\frac{\DD R_p}{L^2}\right)X.
\end{align*}

\section{Higher genus series on $\overline{M}_{g,n}$}\label{hgi}

 \subsection{Intersection theory on $\overline{M}_{g,n}$} \label{intmg}
 We review here the now standard method used by Givental \cite{Elliptic,SS,Book} to 
 express genus $g$ descendent correlators in terms of genus 0 data.
 
 Let $t_0,t_1,t_2, \ldots$ be formal variables. The series
 $$T(c)=t_0+t_1 c+t_2 c^2+\ldots$$   in the
additional variable $c$ plays a basic role. The variable $c$
will later be  replaced by the first Chern class $\psi_i$ of
 a cotangent line  over $\overline{M}_{g,n}$, 
 $$T(\psi_i)= t_0 + t_1\psi_i+ t_2\psi_i^2 +\ldots\, ,$$
 with the index $i$
 depending on the position of the series $T$ in the correlator.

Let $2g-2+n>0$.
For $a_i\in \mathbb{Z}_{\geq 0}$ and  $\gamma \in H^*(\overline{M}_{g,n})$, define the correlator 
\begin{multline*}
    \lann \psi^{a_1},\ldots,\psi^{a_n}\, | \, \gamma\,  \rann_{g,n}=
    \sum_{k\geq 0} \frac{1}{k!}\int_{\overline{M}_{g,n+k}}
    \gamma \, \psi_1^{a_1}\cdots 
     \psi_n^{a_n}  \prod_{i=1}^k T(\psi_{n+i})\, . 
\end{multline*}
In the above summation,
the $k=0$ term is $$\int_{\overline{M}_{g,n}}\gamma\, \psi_1^{a_1}\cdots\psi_n^{a_n}\,.$$
We also need the following correlator defined for the unstable case,

$$\lan\lan 1,1 \ran\ran_{0,2}=\sum_{k > 0}\frac{1}{k!}\int_{\overline{M}_{0,2+k}}\prod_{i=1}^k T(\psi_{2+i})\,.$$

For formal variables $x_1,\ldots,x_n$, we also define the correlator
\begin{align}\label{derf}
\lannn \frac{1}{x_1-\psi},\ldots,\frac{1}{x_n-\psi}\, \Big| \, \gamma \, \rannn_{g,n}
\end{align}
in the standard way by expanding $\frac{1}{x_i-\psi}$ as a geometric series.

Denote by $\mathds{L}$ the differential operator 
\begin{align*}
        \mathds{L}\, =\, 
        \frac{\partial}{\partial t_0}-\sum_{i=1}^\infty t_i\frac{\partial}{\partial t_{i-1}}
        \, =\, \frac{\partial}{\partial t_0}-t_1\frac{\partial}{\partial t_0}-t_2\frac{\partial}{\partial t_1}-\ldots
        \, .
\end{align*}
 The string equation yields the following result.
 
\begin{Lemma} \label{stst} For $2g-2+n>0$, we have
$\mathds{L}\lann 1,\ldots,1\, | \, \gamma\, \rann_{g,n}=0$ and 
\begin{multline*}
\mathds{L} \lannn \frac{1}{x_1-\psi},\ldots,\frac{1}{x_n-\psi}\, \Big| \,\gamma \, 
\rannn_{g,n}= \\
 \left(\frac{1}{x_1}+\ldots +\frac{1}{x_n}\right)
 \lannn\frac{1}{x_1-\psi},\ldots \frac{1}{x_n-\psi}\, \Big| \, \gamma \, \rannn_{g,n}\, .
 \end{multline*}
\end{Lemma}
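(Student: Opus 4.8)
The plan is to prove both assertions by the usual string-equation argument on $\overline{M}_{g,n+k}$, keeping careful track of how the operator $\mathds{L}$ encodes the effect of forgetting (or rather, adding and integrating out) a marked point carrying the series $T(\psi)$. First I would recall the string equation in the form needed here: for a class of the shape $\psi_1^{a_1}\cdots\psi_n^{a_n}\prod T(\psi_{n+i})$ on $\overline{M}_{g,n+k+1}$, pushing forward along the map $\pi\colon \overline{M}_{g,n+k+1}\to\overline{M}_{g,n+k}$ that forgets the last point replaces $\prod(x_i-\psi_i)^{-1}$-type factors by the dilaton/string correction and, on the $T$-factors, produces the shift $t_i\mapsto t_i$ with a derivative $\partial/\partial t_{i-1}$ coming from the identity $\psi_{n+i} = \pi^*\psi_{n+i} + [\text{boundary}]$ together with $\pi_*(\psi_{\text{last}}^{0})=0$ issues. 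The cleanest way to package this: the operator $\frac{\partial}{\partial t_0}$ inserts one extra marked point with a bare $T(\psi)$ (i.e. integrates against $T(\psi_{\text{new}})$ and records the $t_0$-coefficient), and the string equation says that inserting a point with cotangent class $\psi^0=1$ and then forgetting it acts on the remaining $T$-insertions by $T(\psi)\mapsto T(\psi)$ with the substitution $t_j\mapsto t_j - t_{j+1}\cdot(\text{lower shift})$ — which is exactly $-\sum_i t_i\partial/\partial t_{i-1}$.

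For part (i), the correlator $\lann 1,\ldots,1\mid\gamma\rann_{g,n}$ has all $a_i=0$, so applying $\mathds{L}$ and using the string equation term by term in $k$: the $\frac{\partial}{\partial t_0}$ term adds a marking with trivial $\psi$-power, the string equation on $\overline{M}_{g,n+k+1}$ turns this into a sum over the $n$ original markings (each contributing $\psi_i^{-1}\cdot\psi_i^{0}\cdot(\cdots)$, which vanishes since the $a_i=0$ leaves no negative power to absorb — more precisely the string-equation output on a marking with $\psi^0$ is zero) plus the reshuffling of the $k$ $T$-insertions, and the latter is precisely cancelled by $-\sum_i t_i\partial/\partial t_{i-1}$. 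Hence $\mathds{L}\lann 1,\ldots,1\mid\gamma\rann_{g,n}=0$. The bookkeeping to watch is the combinatorial factor $\frac{1}{k!}$ versus $\frac{1}{(k+1)!}$ and the $(k+1)$ choices of which inserted point is the "string" point; these combine correctly, as in Givental's setup.

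For part (ii), the argument is identical except that each factor $\frac{1}{x_i-\psi}$, expanded as $\sum_{a\ge0}\psi^a x_i^{-a-1}$, now does interact nontrivially with the string equation: removing a marking with $\psi^0$ next to $\frac{1}{x_i-\psi}$ produces the standard identity
\[
\pi_*\!\left(\frac{1}{x_i-\psi_i}\right) \ \leadsto\ \frac{1}{x_i}\cdot\frac{1}{x_i-\psi_i},
\]
so summing over $i=1,\ldots,n$ gives the prefactor $\left(\frac1{x_1}+\cdots+\frac1{x_n}\right)$ multiplying the original correlator, while the effect on the $T$-insertions is again cancelled by the $-\sum_i t_i\partial/\partial t_{i-1}$ piece of $\mathds{L}$. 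I expect the main obstacle to be purely organizational: making precise the "insert a point, apply the string equation, reindex the $T$-sum" step uniformly across the stable range $2g-2+n>0$ and verifying it is compatible with the separately-defined unstable correlator $\lan\lan 1,1\ran\ran_{0,2}$ (needed so the statement is consistent when it is later applied with $(g,n)=(0,2)$). There is no deep difficulty — it is the standard translation of the string equation into the variables $t_0,t_1,\ldots$ — but one must be scrupulous about boundary terms in $\psi_{\text{new}} = \pi^*\psi + \delta$ and about the vanishing $\pi_*(1)=0$ that kills the naive $a_i=0$ contributions in part (i).
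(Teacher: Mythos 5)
Your proposal is correct and matches the paper's (essentially unstated) argument: the paper simply invokes the string equation, and your detailed bookkeeping — $\partial/\partial t_0$ inserting a $\psi^0$ point, the string equation shifting each $T$-insertion down by one power of $\psi$ (cancelled by $-\sum_i t_i\partial/\partial t_{i-1}$), and the surviving terms at the $n$ markings giving $0$ for $\psi^0$ insertions and the factor $\sum_i 1/x_i$ for the geometric series $1/(x_i-\psi)$ — is exactly the intended proof. No gaps; the combinatorial $1/k!$ reindexing and the pullback of $\gamma$ behave as you indicate.
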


After the restriction $t_0=0$ and application of the dilaton equation,
the correlators are expressed in terms of finitely many integrals (by the
dimension constraint). For example,
\begin{eqnarray*}
    \lann 1,1,1\rann_{0,3}\, |_{t_0=0} &= &\frac{1}{1-t_1}\, ,\\
    \lann 1,1,1,1\rann_{0,4}\, |_{t_0=0}& =&\frac{t_2}{(1-t_1)^3}\, ,\\
    \lann 1,1,1,1,1\rann_{0,5}\, |_{t_0=0}&=&\frac{t_3}{(1-t_1)^4}+\frac{3 t_2^2}{(1-t_1)^5}\, ,\\
    \lann 1,1,1,1,1,1\rann_{0,6}\, |_{t_0=0}&=&\frac{t_4}{(1-t_1)^5}+\frac{10 t_2 t_3}{(1-t_1)^6}+\frac{15 t^3_2}{(1-t_1)^7}\, .
\end{eqnarray*}\\

We consider 
$\CC(t_1)[t_2,t_3,...]$
as $\ZZ$-graded ring over $\CC(t_1)$ with 
$$\text{deg}(t_i)=i-1\ \ \text{for $i\geq 2$ .}$$
Define a subspace of homogeneous elements by
$$\CC\left[\frac{1}{1-t_1}\right][t_2,t_3,\ldots]_{\text{Hom}} \subset 
\CC(t_1)[t_2,t_3,...]\, .
$$
We easily see 
$$\lann \psi^{a_1},\ldots,\psi^{a_n}\, | \, \gamma \, \rann_{g,n}\, |_{t_0=0}\ \in\
\CC\left[\frac{1}{1-t_1}\right][t_2,t_3,\ldots]_{\text{Hom}}\, .$$
Using the leading terms (of lowest degree in $\frac{1}{(1-t_1)}$), we obtain the
following result.

\begin{Lemma}\label{basis}
The set of genus 0 correlators
 $$
 \Big\{ \, \lann 1,\ldots,1\rann_{0,n}\, |_{t_0=0} \, \Big\}_{n\geq  4} $$ 
freely generate the ring
 $\CC(t_1)[t_2,t_3,...]$ over $\CC(t_1)$.
\end{Lemma}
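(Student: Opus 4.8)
The plan is to show that the correlators $\lann 1,\ldots,1\rann_{0,n}|_{t_0=0}$ for $n \geq 4$ are algebraically independent over $\CC(t_1)$ and generate the whole ring, by an induction on $n$ matching the grading $\deg(t_i) = i-1$. First I would record the explicit low-order computations already displayed in the excerpt: $\lann 1,1,1,1\rann_{0,4}|_{t_0=0} = t_2/(1-t_1)^3$, and in general the dilaton/string-reduced formula expresses $\lann 1,\ldots,1\rann_{0,n}|_{t_0=0}$ as a polynomial in $t_2,\ldots,t_{n-2}$ with coefficients in $\CC[\tfrac{1}{1-t_1}]$. The key structural observation is the ``leading term'': among all monomials in $t_2,t_3,\ldots$ appearing in $\lann 1,\ldots,1\rann_{0,n}|_{t_0=0}$, the one of lowest degree in $\tfrac{1}{1-t_1}$ is a nonzero scalar multiple of $t_{n-2}\cdot(1-t_1)^{-(n-1)}$ (this comes from the single genus-0 integral $\int_{\overline M_{0,n}} \psi_{n}^{\,n-3}$ appearing when only one extra marking carries the full power of $T$). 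This is visible in the table: $t_2/(1-t_1)^3$, then $t_3/(1-t_1)^4 + (\text{lower})$, then $t_4/(1-t_1)^5 + (\text{lower})$.

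Granting the leading-term claim, the generation statement follows by a triangular induction. Suppose inductively that $\lann 1,\ldots,1\rann_{0,m}|_{t_0=0}$ for $4 \le m \le n-1$ already generate $\CC(t_1)[t_2,\ldots,t_{n-3}]$. The correlator $\lann 1,\ldots,1\rann_{0,n}|_{t_0=0}$ has the form $c_n\, t_{n-2}(1-t_1)^{-(n-1)} + P$, where $c_n \in \CC^\times$ and $P$ lies in $\CC(t_1)[t_2,\ldots,t_{n-3}]$ (every other monomial involves only $t_2,\ldots,t_{n-3}$, by the degree constraint $\sum (a_i - ) = $ dimension and the fact that a monomial of top index $t_{n-2}$ is forced to be the leading one). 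Hence $t_{n-2}$ can be solved for in terms of $\lann 1,\ldots,1\rann_{0,n}|_{t_0=0}$ and the previously constructed generators, so the set $\{\lann 1,\ldots,1\rann_{0,m}|_{t_0=0}\}_{4 \le m \le n}$ generates $\CC(t_1)[t_2,\ldots,t_{n-2}]$. Taking the union over all $n$ gives generation of $\CC(t_1)[t_2,t_3,\ldots]$. Algebraic independence (the word ``freely'') is the same triangularity read in reverse: a nontrivial polynomial relation among the $\lann 1,\ldots,1\rann_{0,n}|_{t_0=0}$ would, upon extracting the highest $t_{n-2}$ appearing, force a relation among strictly fewer generators, contradicting the induction base $\lann 1,1,1,1\rann_{0,4}|_{t_0=0} = t_2/(1-t_1)^3 \ne 0$.

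The main obstacle is establishing the leading-term claim cleanly, i.e.\ identifying precisely the lowest-$\tfrac{1}{1-t_1}$-degree part of $\lann 1,\ldots,1\rann_{0,n}|_{t_0=0}$ and showing its coefficient of $t_{n-2}$ is nonzero. This requires unwinding the definition $\lann 1,\ldots,1\rann_{0,n} = \sum_{k\ge 0}\tfrac1{k!}\int_{\overline M_{0,n+k}} \prod_{i=1}^k T(\psi_{n+i})$ after $t_0 = 0$, applying the dilaton equation to absorb the $t_1$-insertions into powers of $\tfrac{1}{1-t_1}$ (each dilaton point contributes a factor governed by $2g-2+n$, here $n-2+k$), and using the string equation to reduce the remaining $T(\psi)$ insertions; the degree in $\tfrac{1}{1-t_1}$ is then controlled by the number of dilaton reductions performed, which is minimized exactly when a single extra marking absorbs the maximal cotangent power $\psi^{n-3}$ on $\overline M_{0,n}$, contributing $t_{n-2}$. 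The requisite vanishing/nonvanishing is $\int_{\overline M_{0,n}}\psi_n^{\,n-3} = \binom{n-3}{1,\ldots} \ne 0$ (a standard genus-0 $\psi$-integral), so no genuinely hard geometry is needed — only careful bookkeeping of the string and dilaton equations, which is exactly the ``leading terms of lowest degree in $\tfrac{1}{1-t_1}$'' bookkeeping the excerpt alludes to just before the statement.
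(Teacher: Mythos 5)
Your proposal is correct and takes essentially the same route as the paper: the paper's proof is precisely the leading-term/triangularity observation (the lowest-degree term in $\frac{1}{1-t_1}$ of $\lann 1,\ldots,1\rann_{0,n}|_{t_0=0}$ is $t_{n-2}(1-t_1)^{-(n-1)}$, and homogeneity of degree $n-3$ forces every other monomial to lie in $\CC(t_1)[t_2,\ldots,t_{n-3}]$), which you spell out in more detail. One off-by-one slip worth fixing: with $t_0=0$ the single-extra-marking contribution is $\int_{\overline{M}_{0,n+1}}\psi_{n+1}^{\,n-2}=1$ on $\overline{M}_{0,n+1}$ (not $\int_{\overline{M}_{0,n}}\psi_n^{\,n-3}$), and the subsequent dilaton reductions give the coefficient $(1-t_1)^{-(n-1)}$ exactly; this does not affect the argument.
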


By  Lemma \ref{basis}, we can find a unique representation of $\lann \psi^{a_1},\ldots,\psi^{a_n}\rann_{g,n}|_{t_0=0}$
in the  variables
\begin{equation}\label{k3k3}
\Big\{\, \lann 1,\ldots,1\rann_{0,n}|_{t_0=0}\, \Big\}_{n\geq 3}\, .
\end{equation}
The $n=3$ correlator is included in the set \eqref{k3k3} to
capture the variable $t_1$.
For example, in $g=1$,
\begin{eqnarray*}
    \lann 1,1\rann_{1,2}|_{t_0=0}&=&\frac{1}{24}
    \left(\frac{\lann 1,1,1,1,1\rann_{0,5}|_{t_0=0}}{\lan 1,1,1\rann_{0,3}|_{t_0=0}}-\frac{\lann 1,1,1,1\rann^2_{0,4}|_{t_0=0}}{\lann 1,1,1\rann^2_{0,3}|_{t_0=0}}\right)\, ,\\
    \lann 1\rann_{1,1}|_{t_0=0}&=&\frac{1}{24}\frac{\lann 1,1,1,1\rann_{0,4}|_{t_0=0}}{\lann 1,1,1\rann_{0,3}|_{t_0=0}}
    \end{eqnarray*}
A more complicated example in $g=2$ is    
\begin{eqnarray*}
\lann \ \rann_{2,0}|_{t_0=0}&=& \ \ \frac{1}{1152}\frac{\lann 1,1,1,1,1,1\rann_{0,6}|_{t_0=0}}{\lann 1,1,1\rann_{0,3}|_{t_0=0}^2}\\
    & & -\frac{7}{1920}\frac{\lann 1,1,1,1,1\rann_{0,5}|_{t_0=0}\lann 1,1,1,1\rann_{0,4}|_{t_0=0}}{\lann 1,1,1\rann_{0,3}|_{t_0=0}^3}\\& &+\frac{1}{360}\frac{\lann 1,1,1,1\rann_{0,4}|_{t_0=0}^3}{\lann 1,1,1\rann_{0,3}
    |_{t_0=0}^4}\, .
\end{eqnarray*}

\begin{Def} 
For $\gamma \in H^*(\overline{M}_{g,k})$, let $$\pP^{a_1,\ldots,a_n,\gamma}_{g,n}(s_0,s_1,s_2,...)\in \QQ(s_0, s_1,..)$$ be 
the unique rational function satisfying the condition
$$\lann \psi^{a_1},\ldots,\psi^{a_n}\, |\, \gamma\, \rann_{g,n}|_{t_0=0}
=\pP^{a_1,a_2,...,a_n,\gamma}_{g,n}|_{s_i=\lann 1,\ldots,1\rann_{0,i+3}|_{t_0=0}}\, . $$
\end{Def}
 
\begin{Prop}\label{GR1} For $2g-2+n>0$,
we have
 $$\lann 1,\ldots,1\,|\, \gamma\, \rann_{g,n}
=\pP^{0,\ldots,0,\gamma}_{g,n}|_{s_i=\lann 1,\ldots,1\rann_{0,i+3}}\, . $$
\end{Prop}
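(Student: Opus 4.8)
The plan is to prove Proposition \ref{GR1} by reducing the statement to the combination of Lemma \ref{basis} and the string equation (Lemma \ref{stst}), exploiting that the correlators $\lann 1,\ldots,1\,|\,\gamma\,\rann_{g,n}$ and the genus $0$ basis correlators $\lann 1,\ldots,1\rann_{0,i+3}$ satisfy \emph{the same} differential relation with respect to $\mathds{L}$. The key observation is that the defining identity of $\pP^{0,\ldots,0,\gamma}_{g,n}$ holds after the restriction $t_0=0$, and the claim is that it continues to hold \emph{before} restriction, i.e.\ as an identity of formal power series in $t_0,t_1,t_2,\ldots$. So the whole content is: reconstruct the full $t_0$-dependence of both sides from their $t_0=0$ restrictions, and check the reconstructions agree.

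First I would record that by Lemma \ref{stst}, $\mathds{L}\lann 1,\ldots,1\,|\,\gamma\,\rann_{g,n}=0$, and likewise $\mathds{L}\lann 1,\ldots,1\rann_{0,i+3}=0$ for each $i\ge 0$ (these are stable since $i+3\ge 3$). The operator $\mathds{L}=\frac{\partial}{\partial t_0}-\sum_{i\ge1}t_i\frac{\partial}{\partial t_{i-1}}$ is, up to the triangular correction terms, differentiation in $t_0$; its characteristic flow lets one solve the homogeneous equation $\mathds{L}F=0$ uniquely in terms of the initial condition $F|_{t_0=0}$. Concretely, the change of variables $u_i = u_i(t_0,t_1,t_2,\ldots)$ that linearizes $\mathds{L}$ (the ``dilaton/string shift'', under which $\mathds{L}$ becomes $\partial/\partial u_0$) shows that any solution of $\mathds{L}F=0$ is a formal function of $u_1,u_2,\ldots$ alone, hence determined by $F|_{t_0=0}$. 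Applying this to $F = \lann 1,\ldots,1\,|\,\gamma\,\rann_{g,n}$ and to each $s_i = \lann 1,\ldots,1\rann_{0,i+3}$ simultaneously: since $\pP^{0,\ldots,0,\gamma}_{g,n}$ is a fixed rational function, the composite $\pP^{0,\ldots,0,\gamma}_{g,n}|_{s_i=\lann 1,\ldots,1\rann_{0,i+3}}$ also satisfies $\mathds{L}(\,\cdot\,)=0$ (the operator $\mathds{L}$ acts through the $s_i$ by the chain rule, and annihilates each). Both sides therefore solve the same first-order equation and have the same restriction at $t_0=0$ by the definition of $\pP$, so they coincide.

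The one remaining point to nail down is the uniqueness of solutions of $\mathds{L}F=0$ with prescribed value at $t_0=0$ \emph{within the relevant formal class} — here $F$ lives in $\CC\big[\frac{1}{1-t_1}\big][t_2,t_3,\ldots][[t_0]]$ (after the dilaton normalization) or more precisely in the completed graded ring in which all these correlators naturally sit. I would verify that $\mathds{L}$ raises the $t_0$-adic valuation predictably: writing $F=\sum_{m\ge0}F_m t_0^m/m!$, the equation $\mathds{L}F=0$ becomes a recursion $F_{m+1} = \big(\sum_{i\ge1}t_i\frac{\partial}{\partial t_{i-1}}\big)F_m$, which determines all $F_m$ from $F_0 = F|_{t_0=0}$. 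This makes uniqueness immediate and elementary, bypassing any need to invoke the linearizing change of variables explicitly.

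The main obstacle I anticipate is purely bookkeeping: making sure the formal framework is consistent, namely that $\lann 1,\ldots,1\,|\,\gamma\,\rann_{g,n}$ genuinely lies in the same ring to which Lemma \ref{basis} applies coefficient-wise in $t_0$, so that the representability assertion ``$=\pP^{0,\ldots,0,\gamma}_{g,n}|_{s_i=\ldots}$'' at $t_0=0$ can be promoted. This requires checking that each $t_0$-coefficient $F_m$ is again a polynomial in the genus-$0$ basis correlators with the same structure — which follows because applying $\sum_{i\ge1}t_i\partial_{t_{i-1}}$ preserves the subring generated by the $s_i$ (one can see this from the string equation applied to the $s_i$ themselves, or simply from the fact that $\mathds{L}s_i = 0$ forces the $t_0$-expansion of $s_i$ to be recovered from $s_i|_{t_0=0}$ by the same recursion). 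Once that closure is in place, the proof is a two-line comparison of solutions to an ODE with matching initial data.
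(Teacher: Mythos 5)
Your argument is essentially the paper's own proof: both sides are annihilated by $\mathds{L}$ and have the same restriction at $t_0=0$ (by the definition of $\pP^{0,\ldots,0,\gamma}_{g,n}$), so they coincide by uniqueness of solutions of $\mathds{L}F=0$ with prescribed initial data — a point the paper leaves implicit and you spell out via the $t_0$-expansion. One tiny correction: since $\sum_{i\geq 1}t_i\frac{\partial}{\partial t_{i-1}}$ contains the term $t_1\frac{\partial}{\partial t_0}$, the recursion is $(1-t_1)F_{m+1}=\sum_{i\geq 2}t_i\frac{\partial}{\partial t_{i-1}}F_m$ rather than the one you wrote, but as $1-t_1$ is invertible in the relevant ring this still determines $F_{m+1}$ from $F_m$ and the uniqueness argument goes through unchanged.
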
 

\begin{proof}
 Both sides of the equation satisfy the differential equation
 \begin{align*}
     \mathds{L}=0.
 \end{align*}
 By definition, both sides have the same initial conditions at $t_0=0$.
\end{proof}

\begin{Prop}\label{GR2} For $2g-2+n>0$,
 \begin{multline*}
     \lannn \frac{1}{x_1-\psi_1}, \ldots, \frac{1}{x_n-\psi_n}\, \Big| \, \gamma \, \rannn_{g,n}= \\
     e^{\lann 1,1\rann_{0,2}(\sum_i\frac{1}{x_i})}\sum_{a_1,\ldots,a_n}\frac{\pP^{a_1,\ldots,a_n,\gamma}_{g,n}|_{s_i=\lann 1,\ldots,1\rann_{0,i+3}}
     }{x_1^{a_1+1} \cdots x_n^{a_n+1}}.
 \end{multline*}
\end{Prop}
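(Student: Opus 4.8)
The plan is to reduce the statement for the generating correlator $\lannn \tfrac{1}{x_1-\psi_1},\ldots,\tfrac{1}{x_n-\psi_n}\,|\,\gamma\rannn_{g,n}$ to Proposition \ref{GR1} together with the string-equation content of Lemma \ref{stst}. Both sides of the claimed identity are power series in $1/x_1,\ldots,1/x_n$ whose coefficients lie in $\CC(t_1)[t_2,t_3,\ldots][[t_0]]$; it suffices to show the two sides agree coefficient-by-coefficient in the $x_i$. Expanding $\tfrac{1}{x_i-\psi_i}$ as the geometric series $\sum_{a_i\ge 0}\psi_i^{a_i}/x_i^{a_i+1}$, the left side is by definition $\sum_{a_1,\ldots,a_n}\big(\prod_i x_i^{-a_i-1}\big)\lann\psi^{a_1},\ldots,\psi^{a_n}\,|\,\gamma\rann_{g,n}$. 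So the assertion is precisely the claim
\begin{align*}
\lann\psi^{a_1},\ldots,\psi^{a_n}\,|\,\gamma\rann_{g,n}
= e^{\lann 1,1\rann_{0,2}\,(\sum_i \tfrac{1}{x_i})}\Big|_{\text{coeff extracted}}\;\times\;\pP^{a_1,\ldots,a_n,\gamma}_{g,n}\big|_{s_i=\lann 1,\ldots,1\rann_{0,i+3}},
\end{align*}
i.e. that the full correlator is obtained from its $t_0=0$ restriction (which is $\pP^{a_1,\ldots,a_n,\gamma}_{g,n}$ evaluated at the genus-0 series, by definition of $\pP$ and Proposition \ref{GR1} applied with extra $\psi$-insertions) by multiplying by the universal exponential factor $e^{\lann 1,1\rann_{0,2}(\sum_i 1/x_i)}$ and shifting $a_i$.

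The mechanism is the following. First I would observe that, by the Definition of $\pP$ and by Lemma \ref{basis}, the right-hand side of Proposition \ref{GR2} is a well-defined element of the appropriate ring, and that at $t_0=0$ it reduces to $\sum_{a_i}\prod_i x_i^{-a_i-1}\,\lann\psi^{a_1},\ldots,\psi^{a_n}\,|\,\gamma\rann_{g,n}|_{t_0=0}$, since $\lann 1,1\rann_{0,2}|_{t_0=0}=0$ (the unstable correlator $\lann 1,1\rann_{0,2}$ is a sum over $k>0$ of integrals with at least one $T(\psi)$ factor, hence has no constant term in $t_0$). Next I would check that both sides satisfy the \emph{same} first-order linear PDE in $t_0$. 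By Lemma \ref{stst}, the left side satisfies $\mathds{L}\,\lannn \tfrac{1}{x_1-\psi},\ldots\rannn_{g,n} = (\sum_i 1/x_i)\,\lannn\tfrac{1}{x_1-\psi},\ldots\rannn_{g,n}$. For the right side, one computes $\mathds{L}\,e^{\lann 1,1\rann_{0,2}(\sum_i 1/x_i)} = (\sum_i 1/x_i)\,e^{\lann 1,1\rann_{0,2}(\sum_i 1/x_i)}$, using the $n=2$, $\gamma=1$ case of Lemma \ref{stst} — namely $\mathds{L}\lann 1,1\rann_{0,2} = 1$ (the string equation for the two-point unstable correlator produces exactly the constant $1$) — while the remaining factor $\pP^{a_1,\ldots,a_n,\gamma}_{g,n}|_{s_i=\lann1,\ldots,1\rann_{0,i+3}}$ is $\mathds{L}$-closed because each $\lann 1,\ldots,1\rann_{0,i+3}$ is, again by Lemma \ref{stst} (the $\gamma=1$, all-$\psi^0$ case). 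Hence the product satisfies the same inhomogeneous transport equation as the left side. Since both sides agree at $t_0=0$ and solve the same first-order ODE in $t_0$ (with $\mathds{L}$ playing the role of $\partial/\partial t_0$ after the standard change of variables), they coincide identically.

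The one genuinely delicate point — and the step I expect to be the main obstacle — is bookkeeping the \emph{shift of descendent indices} correctly: the factor $e^{\lann 1,1\rann_{0,2}(\sum 1/x_i)}$ on the right is what converts the bare $t_0=0$ descendent integrals $\lann\psi^{a_1},\ldots,\psi^{a_n}|\gamma\rann_{g,n}|_{t_0=0}$ into the full $t_0$-dependent series, and one must verify that the transport equation argument reproduces exactly the geometric-series reindexing implicit in writing $\tfrac{1}{x_i-\psi_i}$ rather than individual powers $\psi_i^{a_i}$. Concretely, this means checking that $\mathds{L}$ acting on $\tfrac{1}{x_i-\psi_i}$-type insertions shifts $a_i \mapsto a_i - 1$ with the correct $1/x_i$ weight — which is the content of the second displayed formula in Lemma \ref{stst} — and matching this against the $t_0$-derivative of the exponential prefactor. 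Once the $t_0=0$ initial condition and the $\mathds{L}$-equation are both verified, uniqueness of solutions to this linear first-order system finishes the proof, exactly as in the proof of Proposition \ref{GR1}.
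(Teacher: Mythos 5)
Your proposal is correct and takes essentially the same route as the paper: both sides are shown to satisfy $\big(\mathds{L}-\sum_i \tfrac{1}{x_i}\big)F=0$ and to agree at $t_0=0$, using $\mathds{L}\lann 1,1\rann_{0,2}=1$, $\lann 1,1\rann_{0,2}|_{t_0=0}=0$, and the $\mathds{L}$-closedness of the stable correlators, exactly as in the paper's proof (which mirrors that of Proposition \ref{GR1}). One small correction: the identity $\mathds{L}\lann 1,1\rann_{0,2}=1$ is \emph{not} an instance of Lemma \ref{stst}, since $(g,n)=(0,2)$ lies outside the stable range $2g-2+n>0$; it is a separate string-equation computation for the unstable two-point series, as your parenthetical remark in fact indicates.
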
 
 
 \begin{proof}
  Both sides of the equation satisfy differential equation
  \begin{align*}
      \mathds{L}-\sum_i\frac{1}{x_i}=0.
  \end{align*}
 Both sides have the same initial conditions at $t_0=0$.
 We use here
     $$\mathds{L} \lann 1,1\rann_{0,2} =1\,, \ \ \ \  \lann 1,1\rann_{0,2}|_{t_0=0}=0\, .$$
 There is no conflict here with Lemma
 \ref{stst} since $(g,n)=(0,2)$ is not
 in the stable range.
 \end{proof}

\subsection{The unstable case $(0,2)$}
The definition given in \eqref{derf}
of the correlator is valid
in the stable range $$2g-2+n>0\, .$$
The unstable case $(g,n)=(0,2)$ plays a
special role. We define
$$\lannn \frac{1}{x_1-\psi_1}, \frac{1}{x_2-\psi_2}\rannn_{0,2}$$
by 
adding the
degenerate term
$$\frac{1}{x_1+x_2}$$
to the terms obtained
by the 
 expansion of $\frac{1}{x_i-\psi_i}$ as 
 a geometric series.
 The degenerate term is associated
to the (unstable) moduli space
of genus 0 with 2 markings.

\begin{Prop}\label{GR22} We have
 \begin{equation*}
     \lannn \frac{1}{x_1-\psi_1}, \frac{1}{x_2-\psi_2} \rannn_{0,2}= 
     e^{\lann 1,1\rann_{0,2}\left(\frac{1}{x_1}+
     \frac{1}{x_2}\right)}\left(\frac{1}{x_1+x_2}\right)\, .
 \end{equation*}
\end{Prop}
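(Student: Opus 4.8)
The plan is to reduce the $(0,2)$ statement to the stable range result of Proposition \ref{GR2}, using the factorization structure of the right-hand side and the defining properties of the correlator. First I would unpack the definition: by the prescription just stated, the left-hand side is, by definition, the sum
$$\frac{1}{x_1+x_2}+\sum_{a_1,a_2\geq 0}\frac{\lann \psi^{a_1},\psi^{a_2}\rann_{0,2}}{x_1^{a_1+1}x_2^{a_2+1}}\,,$$
where the second summand is the genuine (stable-range) expansion with $k\geq 1$ insertions of $T(\psi)$. So the whole problem is to identify this with $e^{\lann 1,1\rann_{0,2}(1/x_1+1/x_2)}/(x_1+x_2)$.

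The main step is to run the same differential-equation argument as in Propositions \ref{GR1} and \ref{GR2}, but now tracking the degenerate term separately. I would show that both sides satisfy $\big(\mathds{L}-\tfrac{1}{x_1}-\tfrac{1}{x_2}\big)(\ ) = 0$. For the left-hand side: applying $\mathds{L}$ to the stable part gives the $(1/x_1+1/x_2)$-eigenvalue contribution exactly as in Lemma \ref{stst} (the string equation computation is insensitive to stability once at least one extra marking is present), while the degenerate constant term $\frac{1}{x_1+x_2}$ is killed by $\partial/\partial t_0$ and by each $t_i\partial/\partial t_{i-1}$, so it contributes nothing — this is consistent because $\mathds{L}$ acting on the left side must produce $\big(\tfrac{1}{x_1}+\tfrac{1}{x_2}\big)$ times the left side, and indeed the extra $\frac{1}{x_1+x_2}$ on the right of that identity is supplied by $\big(\tfrac1{x_1}+\tfrac1{x_2}\big)\cdot\frac{1}{x_1+x_2}$ matched against $\mathds{L}$ of a term that must come from the stable part — here I would use $\mathds{L}\lann 1,1\rann_{0,2}=1$ (as invoked in the proof of Proposition \ref{GR2}) to see that the stable part of the left side, when hit by $\mathds{L}$, produces precisely $\big(\tfrac1{x_1}+\tfrac1{x_2}\big)$ times (stable part) $+\ \tfrac1{x_1}+\tfrac1{x_2}$ worth of the degenerate piece. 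For the right-hand side the eigenvalue property is immediate from the chain rule, since $\mathds{L}\,\lann 1,1\rann_{0,2}=1$ and $\frac{1}{x_1+x_2}$ is a constant in the $t$-variables. Then I would check the initial condition at $t_0=0$: the right side becomes $\frac{1}{x_1+x_2}$ because $\lann 1,1\rann_{0,2}|_{t_0=0}=0$, and the left side also becomes $\frac{1}{x_1+x_2}$ since every stable correlator $\lann\psi^{a_1},\psi^{a_2}\rann_{0,2}$ has at least one $T(\psi)=t_0+\dots$ insertion and hence vanishes at $t_0=0$ (each monomial carries a positive power of $t_0$, or is killed because $\overline M_{0,2+k}$ needs $k\geq 1$ and the $k=1$ integral $\int_{\overline M_{0,3}}\psi^{a_1}\psi^{a_2}T(\psi_3)$ has no $t_0$-free part of the right dimension). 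Uniqueness of solutions of the linear ODE $\big(\mathds{L}-\tfrac1{x_1}-\tfrac1{x_2}\big)F=0$ with prescribed value at $t_0=0$ then forces equality.

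The step I expect to be the main obstacle is the bookkeeping in the first bullet of the ODE verification for the left-hand side: one must be careful that the degenerate term $\frac{1}{x_1+x_2}$, which is \emph{not} produced by any geometry, nonetheless fits the eigenvalue equation only because of the specific normalization $\lann 1,1\rann_{0,2}|_{t_0=0}=0$ and $\mathds{L}\lann 1,1\rann_{0,2}=1$. In other words, the degenerate term is exactly the "constant of integration" forced by the $(0,2)$ boundary condition, and the content of the proposition is that this constant exponentiates correctly against the $e^{\lann 1,1\rann_{0,2}(1/x_1+1/x_2)}$ prefactor. Once that matching is spelled out, the argument closes exactly as in Propositions \ref{GR1} and \ref{GR2}.
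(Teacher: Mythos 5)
Your proposal is correct and follows essentially the same route as the paper: both sides are shown to satisfy $\big(\mathds{L}-\tfrac{1}{x_1}-\tfrac{1}{x_2}\big)F=0$ and to agree at $t_0=0$, using $\mathds{L}\lann 1,1\rann_{0,2}=1$ and $\lann 1,1\rann_{0,2}|_{t_0=0}=0$. The paper states this in two lines; you simply spell out the string-equation bookkeeping for the degenerate term and the dimension count for the initial condition, which is exactly the intended argument.
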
 
 
 \begin{proof}
  Both sides of the equation satisfy differential equation
  \begin{align*}
      \mathds{L}-\sum_{i=1}^2\frac{1}{x_i}=0.
  \end{align*}
 Both sides have the same initial conditions at $t_0=0$.
 \end{proof}

\subsection{Local invariants and wall crossing} 
The torus $\T$ acts on the moduli spaces
$\overline{M}_{g,n}(\PP^2,d)$  and
$\overline{Q}_{g,n}(\PP^2,d)$.
We consider here special localization contributions 
associated to the fixed points ${p}_i\in \PP^2$.


Consider first the moduli of stable maps.
Let
$$\overline{M}_{g,n}(\PP^2,d)^{\T,p_i}
\subset \overline{M}_{g,n}(\PP^2,d) $$
be the union of
 $\T$-fixed loci which parameterize stable maps
obtained by attaching $\T$-fixed rational tails to a genus $g$, $n$-pointed
Deligne-Mumford stable curve contracted
to the point $p_i\in\PP^2$.
Similarly, let 
$$\overline{Q}_{g,n}(\PP^2,d)^{\T,p_i}\subset
\overline{Q}_{g,n}(\PP^2,d)
$$
be the parallel $\T$-fixed locus
parameterizing stable quotients obtained
by attaching base points
to  a genus $g$, $n$-pointed
Deligne-Mumford stable curve contracted
to the point $p_i\in\PP^2$.

Let $\Lambda_i$ denote the localization of the ring
$$\CC[\lambda^{\pm 1}_0,\lambda^{\pm 1}_1,\lambda^{\pm 1}_2]$$ at 
the three tangent weights at $p_i\in \PP^2$.
Using the virtual
localization formula \cite{GP}, 
there exist unique series
$$S_{p_i}\in\Lambda_i[\psi][[Q]]$$ 
for which the localization contribution 
of the $\T$-fixed locus
$\overline{M}_{g,n}(\PP^2,d)^{\T,p_i}$
to the equivariant Gromov-Witten
invariants of $K\PP^2$
can be written as
\begin{multline*}
    \sum_{d=0}^\infty Q^d \int_{[\overline{M}_{g,n}(K\PP^2,d)^{\T,p_i}]^{\vir}}
    \psi_1^{a_1}\cdots\psi_n^{a_n}=\\
    \sum_{k=0}^\infty \frac{1}{k!} \int_{\overline{M}_{g,n+k}}
    {\mathsf{H}}_{g}^{p_i}\, \psi_1^{a_1}\cdots\psi_n^{a_n}\, \prod_{j=1}^k S_{p_i}(\psi_{n+j})\, .
\end{multline*}
Here, $\mathsf{H}_{g}^{p_i}$ is the standard vertex class, 
\begin{equation}\label{hhbb}
\frac{e(\mathbb{E}_g^*\otimes T_{p_i}(\PP^2))}{e(T_{p_i}(\PP^2))} \cdot \frac{e(\mathbb{E}_g^* \otimes(-3\lambda_i))}{(-3\lambda_i)}\, ,
\end{equation}
obtained the  Hodge bundle $\mathbb{E}_g\rightarrow \overline{M}_{g,n+k}$.

Similarly, the application of the
virtual localization formula to the moduli of stable
quotients yields classes
$$F_{p_i,k}\in H^*(\overline{M}_{g,n|k})\otimes_\CC\Lambda_i$$ 
for which the contribution of $\overline{Q}_{g,n}(\PP^2,d)^{T,p_i}$ is given by
\begin{multline*}
    \sum_{d=0}^\infty q^d \int_{[\overline{Q}_{g,n}(K\PP^2,d)^{\T,p_i}]^{\vir}}\psi_1^{a_1}\cdots
    \psi_n^{a_n}=\\
    \sum_{k=0}^\infty \frac{q^k}{k!} \int_{\overline{M}_{g,n|k}} \mathsf{H}_{g}^{p_i}\, \psi_1^{a_1}\cdots \psi_n^{a_n}\, F_{p_i,k}.
\end{multline*}
Here $\overline{M}_{g,n|k}$ is the moduli space of genus $g$ curves with markings
$$\{p_1,\cdots,p_n\}\cup\{\hat{p}_1\cdots\hat{p}_k \}\in C^{\text{ns}}\subset C$$
satisfying the conditions
\begin{itemize}
 \item[(i)] the points $p_i$ are distinct,
 \item[(ii)] the points $\hat{p}_j$ are distinct from the points $p_i$,
\end{itemize}
with stability given by the ampleness of 
$$\omega_C(\sum_{i=1}^m p_i+\epsilon\sum_{j=1}^k \hat{p}_j)$$
for every strictly positive $\epsilon \in \QQ$.

The Hodge class $\mathsf{H}_{g}^{p_i}$ is given again by
formula \eqref{hhbb} using the Hodge bundle $$\mathbb{E}_g\rightarrow \overline{M}_{g,n|k}\, .$$

\begin{Def}
 For $\gamma\in H^*(\overline{M}_{g,n})$, let
 \begin{eqnarray*}
     \lann \psi_1^{a_1},\ldots,\psi_n^{a_n}\, |\, \gamma\, \rann_{g,n}^{p_i,\infty}
     &=&
     \sum_{k=0}^\infty \frac{1}{k!}
     \int_{\overline{M}_{g,n+k}} \gamma \, \psi_1^{a_1}\cdots \psi_n^{a_n}\prod_{j=1}^k S_{p_i}(\psi_{n+j})\, ,\\
    \lann \psi_1^{a_1},\ldots,\psi_n^{a_n}\, |\, \gamma\, \rann_{g,n}^{p_i,0+}&=&
    \sum_{k=0}^\infty \frac{q^k}{k!} \int_{\overline{M}_{g,n|k}} \gamma \, \psi_1^{a_1}\cdots \psi_n^{a_n}\, F_{p_i,k}\, .
 \end{eqnarray*}
\end{Def}

\
\begin{Prop} [Ciocan-Fontanine, Kim \cite{CKg}] \label{WC} 
For $2g-2+n>0$,
we have the wall crossing relation
$$\lann \psi_1^{a_1},\ldots,\psi_n^{a_n}\, |\, \gamma\, \rann_{g,n}^{p_i,\infty}(Q(q))= \lann \psi_1^{a_1},\ldots,\psi_n^{a_n}\, |\, \gamma\rann_{g,n}^{p_i,0+}(q)$$
 where 
 $Q(q)$ is the mirror map
 $$Q(q)=\exp(I_1^{K\PP^2}(q))\, .$$
\end{Prop}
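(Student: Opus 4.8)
The plan is to deduce this from the genus~$0$ quasimap wall-crossing of Ciocan-Fontanine and Kim \cite{CKg} by means of the virtual localization formula \cite{GP}; the statement is exactly the ``local at $p_i$'' shadow of their theorem. First I would unwind both sides. By the defining formulas for $\lann\,\cdot\,\rann_{g,n}^{p_i,\infty}$ and $\lann\,\cdot\,\rann_{g,n}^{p_i,0+}$, each correlator is an integral over $\overline{M}_{g,n+k}$ (respectively $\overline{M}_{g,n|k}$) of the \emph{same} vertex Hodge class $\mathsf{H}_g^{p_i}$ of \eqref{hhbb}, against the $\psi$-monomial, the class $\gamma$, and a product of ``leg'' insertions --- copies of the series $S_{p_i}(\psi)$ on the stable maps side, and the base-point classes packaged in $F_{p_i,k}$ on the stable quotients side. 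Since the light markings of $\overline{M}_{g,n|k}$ influence only the stability of the domain, whereas $\mathsf{H}_g^{p_i}$, the $\psi_i^{a_i}$ and $\gamma$ are all pulled back along the map forgetting those markings, the two vertex integrals are literally the same integral over $\overline{M}_{g,n+k}$ once the leg data have been matched. So the problem reduces to matching the leg series under the mirror map, which is a genus~$0$ assertion.

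Next I would identify the leg data in genus~$0$. On the stable maps side, $S_{p_i}$ is, by \cite{GP}, the restriction at $H=\lambda_i$ of the Gromov--Witten $\mathds{S}$-operator of $K\PP^2$; on the stable quotients side, the base-point contributions at $p_i$ are governed --- via the description of the graph-space fixed locus $\F_{k,d}$ in Section~\ref{bcbcbc}, where the entire degree and all light markings sit over $0\in\PP^1$ --- by the restriction at $H=\lambda_i$ of $\mathds{S}_i$. Thus the proposition is equivalent to the genus~$0$ identity
\[
\mathds{S}_i^{\mathsf{GW}}\big(Q(q)\big)\;=\;\mathds{S}_i^{\mathsf{SQ}}(q)\,,\qquad Q(q)=\exp\!\big(I_1^{K\PP^2}(q)\big)\,,
\]
the $\epsilon$-wall-crossing for the $\mathds{S}$-matrix. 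This I would either cite from \cite{CKg} or read off from the genus~$0$ data already assembled here: the evaluation \eqref{I_Hyper_P} of $\dsI$, the Birkhoff factorizations \eqref{S1}, and the asymptotic normal form \eqref{assym} express $\mathds{S}_i^{\mathsf{SQ}}$ through $L$, $\mu$, $C_1$ and the polynomial $R$-series, and running the same formulas after $q\mapsto Q(q)$ yields the Gromov--Witten series --- this is the genus~$0$ content of \eqref{3456}. With the leg series matched, I would substitute them into the common vertex integral and sum over $k$; the powers of $Q$ carried by $S_{p_i}$ and the powers of $q$ carried by $F_{p_i,k}$ correspond precisely under $Q=Q(q)$, which finishes the argument.

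The hard part is the genus~$0$ identity for $\mathds{S}_i$, and this is where the idiosyncrasy of quasimap geometry shows up: the map forgetting a marked point is not defined on $\overline{Q}_{0,n}(\PP^2,d)$, so the string- and WDVV-based derivation available in Gromov--Witten theory is unavailable, and one must instead use the quasimap graph space together with the quadratic relation \eqref{wdvv} between $\mathds{S}_i$ and $\mathds{V}_{ij}$, which encodes the splitting/gluing behaviour of the $\mathds{S}$-matrix. I would invoke \cite{CKg} for this. One small point to check, but not a real obstacle, is that the twist by $e(\mathrm{Obs})=e(R^1\pi_*\mathsf{S}^3)$ appearing throughout these correlators rides along passively: $R\pi_*\mathsf{S}^3$ restricts compatibly to the $\T$-fixed loci of $\overline{M}_{g,n}(\PP^2,d)$ and $\overline{Q}_{g,n}(\PP^2,d)$ and to the graph-space locus $\F_{k,d}$, so $e(\mathrm{Obs})$ passes through every localization residue unchanged.
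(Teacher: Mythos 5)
Your plan reduces the all-genus local statement to the genus~$0$ wall-crossing of the $\mathds{S}$-matrix, and the hinge of that reduction is the sentence asserting that the two vertex integrals are ``literally the same integral over $\overline{M}_{g,n+k}$ once the leg data have been matched.'' That step is a genuine gap: it is not a formal consequence of anything you have set up, and in fact it \emph{is} the proposition. On the stable quotient side the local contribution is an integral over $\overline{M}_{g,n|k}$ against the classes $F_{p_i,k}$, and this is a genuinely different object from an integral over $\overline{M}_{g,n+k}$ against a product of single-variable leg series: in $\overline{M}_{g,n|k}$ the light (weight-$\epsilon$) points may collide, the classes $F_{p_i,k}$ are not defined as products $\prod_j F(\psi_{\hat p_j})$ pulled back from $\overline{M}_{g,n+k}$, and the cotangent classes at the light points differ from the pullbacks under the Hassett reduction morphism by boundary corrections supported where light points meet. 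Re-summing the base-point contributions over $k$ and over the multiplicities, and showing that after the substitution $Q=Q(q)$ they reorganize into the rational-tail (leg-series) form of the $\infty$-side, is precisely the nontrivial higher-genus content; the genus~$0$ identity $\mathds{S}_i^{\mathsf{GW}}(Q(q))=\mathds{S}_i^{\mathsf{SQ}}(q)$ by itself does not deliver it. So as written, your argument assumes the statement it sets out to prove.

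For comparison, the paper does not attempt this reduction at all: Proposition~\ref{WC} is obtained by directly citing \cite[Lemma 5.5.1]{CKg}, which is exactly the fixed-point-local, all-genus wall-crossing statement (for the twisted/local theory, so your remark that $e(R^1\pi_*\mathsf{S}^3)$ rides along is consistent with that citation). If you wish to keep your structure, the honest options are either to cite the all-genus local result of \cite{CKg} (in which case the genus~$0$ discussion becomes superfluous), or to actually carry out the comparison of $\overline{M}_{g,n|k}$-integrals with $\overline{M}_{g,n+k}$-integrals --- which is the substance of the proof in \cite{CKg} and is considerably more than matching leg series. A smaller remark: your claim that the genus~$0$ identity can be ``read off'' from \eqref{I_Hyper_P}, \eqref{S1} and \eqref{assym} is also too quick, since those formulas only compute the $0+$ side; the comparison with the Gromov--Witten $\mathds{S}$-operator again requires the genus~$0$ wall-crossing theorem of Ciocan-Fontanine--Kim, so the citation there is not optional.
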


Proposition \ref{WC} is a consequence
of \cite[Lemma 5.5.1]{CKg}. The mirror
map here is the mirror map for
$K\PP^2$ discussed in Section \ref{holp2}.
 Propositions \ref{GR1} and \ref{WC} together yield 
 \begin{eqnarray*}
 \lann 1,\ldots,1\,  |\, \gamma \, \rann_{g,n}^{p_i,\infty}& =& \pP^{0,\ldots,0,\gamma}_{g,n}\big(\lann 1,1,1\rann_{0,3}^{p_i,\infty},\lann 1,1,1,1\rann _{0,4}^{p_i,\infty},\ldots\big)\, ,\\
 \lann 1,\ldots,1\,  |\, \gamma \, \rann_{g,n}^{p_i,0+}&=&\pP^{0,\ldots,0,\gamma}_{g,n}\big(\lann
 1,1,1\rann_{0,3}^{p_i,0+},\lann 1,1,1,1\rann _{0,4}^{p_i,0+},\ldots\big)\, .
 \end{eqnarray*}
Similarly, using Propositions \ref{GR2} and \ref{WC}, we obtain
\begin{multline*}
\lannn \frac{1}{x_1-\psi}, \ldots, \frac{1}{x_n-\psi}\, \Big| \, \gamma \, \rannn_{g,n}^{p_i,\infty}= \\
     e^{\lann 1,1\rann^{p_i,\infty}_{0,2}\left(\sum_i\frac{1}{x_i}\right)}\sum_{a_1,\ldots,a_n}\frac{\pP^{a_1,\ldots,a_n,\gamma}_{g,n}\big(\lann 1,1,1\rann_{0,3}^{p_i,\infty},\lann 1,1,1,1\rann_{0,4}^{p_i,\infty},\ldots \big)}{x_1^{a_1+1}\cdots x_n^{a_n+1}}\, ,
\end{multline*}
\begin{multline}\label{ppqqpp}
\lannn \frac{1}{x_1-\psi}, \ldots, \frac{1}{x_n-\psi}\, \Big| \, \gamma \, \rannn_{g,n}^{p_i,0+}= \\
     e^{\lann 1,1\rann^{p_i,0+}_{0,2}\left(\sum_i\frac{1}{x_i}\right)}\sum_{a_1,\ldots,a_n}\frac{\pP^{a_1,\ldots,a_n,\gamma}_{g,n}\big(\lann 1,1,1\rann_{0,3}^{p_i,0+},\lann 1,1,1,1\rann_{0,4}^{p_i,0+},\ldots \big)}{x_1^{a_1+1}\cdots x_n^{a_n+1}}\, .
\end{multline}


\section{Higher genus series on $K\PP^2$}\label{hgs}
\subsection{Overview}
We apply the localization strategy introduced first by Givental  \cite{Elliptic,SS,Book} for Gromov-Witten theory to the stable quotient invariants of local $\PP^2$. 
The contribution $\text{Cont}_\Gamma(q)$ 
discussed in Section \ref{locq} 
of a graph $\Gamma \in \mathsf{G}_{g}(\PP^2)$ 
can be separated into vertex and edge contributions.
We express the vertex and edge contributions in terms of
the series $\mathds{S}_i$ and $\mathds{V}_{ij}$ of Section \ref{lightm}.

\subsection{Edge terms}
Recall the definition{\footnote{We use
the variables $x_1$ and $x_2$ here instead
of $x$ and $y$.}}of $\mathds{V}_{ij}$
given in Section \ref{lightm},
\begin{equation}\label{dfdf6}
\mathds{V}_{ij}  =  
\Big\langle \Big\langle  \frac{\phi _i}{x- \psi } ,  \frac{\phi _j}{y - \psi } 
\Big\rangle \Big\rangle  _{0, 2}^{0+,0+}  \, .
\end{equation}
Let $\overline{\mathds{V}}_{ij}$ denote
the restriction of $\mathds{V}_{ij}$
to $t=0$.
Via formula \eqref{ddgg},
$\overline{\mathds{V}}_{ij}$ is a summation of contributions of fixed loci indexed by
a graph $\Gamma$ consisting of two vertices 
connected by a unique edge. 
Let $w_1$ and $w_2$ be 
$\T$-weights. Denote by $$\overline{{\mathds{V}}}_{ij}^{w_1,w_2}$$ the summation of contributions of $\T$-fixed loci with
tangent weights precisely $w_1$
and $w_2$ on the first rational components
which exit the vertex components over
$p_i$ and $p_j$.

The series $\overline{{\mathds{V}}}_{ij}^{w_1,w_2}$
includes {\em both} vertex and edge
contributions.
By definition \eqref{dfdf6} and the virtual localization formula, we find the
following relationship between
$\overline{\mathds{V}}_{ij}^{w_1,w_2}$
and the corresponding
pure edge contribution $\mathsf{E}_{ij}^{w_1,w_2}$,

\begin{eqnarray*}
    e_i\overline{\mathds{V}}_{ij}^{w_1,w_2}e_j
    &=& \lannn \frac{1}{w_1-\psi},\frac{1}{x_1-\psi}\rannn^{p_i,0+}_{0,2}\mathsf{E}_{ij}^{w_1,w_2}
    \lannn \frac{1}{w_2-\psi},\frac{1}{x_2-\psi}\rannn^{p_j,0+}_{0,2}\\
    &=&\frac{e^{\frac{\lann 1,1\rann^{p_i,0+}_{0,2}}{w_1}+\frac{\lann 1,1\rann^{p_j,0+}_{0,2}}{x_1}}}{w_1+x_1}
    \, \mathsf{E}^{w_1,w_2}_{ij}\, \frac{e^{\frac{\lann 1,1\rann^{p_i,0+}_{0,2}}{w_2}+\frac{\lann 1,1\rann^{p_j,0+}_{0,2}}{x_2}}}{w_2+x_2}
     \end{eqnarray*}
    
\begin{align*}        
    =\sum_{a_1,a_2}e^{\frac{\lann 1,1\rann^{p_i,0+}_{0,2}}{x_1}+\frac{\lann 1,1\rann^{p_i,0+}_{0,2}}{w_1}}e^{\frac{\lann 1,1\rann^{p_j,0+}_{0,2}}{x_2}+\frac{\lann 1,1\rann^{p_j,0+}_{0,2}}{w_2}}(-1)^{a_1+a_2} \frac{
    \mathsf{E}^{w_1,w_2}_{ij}}{w_1^{a_1}w_2^{a_2}}x_1^{a_1-1}x_2^{a_2-1}\, .
\end{align*}
After summing over all possible weights, we obtain
$$
    e_i\left(\overline{\mathds{V}}_{ij}-\frac{\delta_{ij}}{e_i(x+y)}\right)e_j=\sum_{w_1,w_2} e_i\overline{\mathds{V}}_{ij}^{w_1,w_2}e_j\, .$$
The above calculations immediately yield
the following result.
    

\begin{Lemma}\label{Edge} We have
 \begin{multline*}
 \left[e^{-\frac{\lann1,1\rann^{p_i,0+}_{0,2}}{x_1}}
       e^{-\frac{\lann1,1\rann^{p_j,0+}_{0,2}}{x_2}}e_i\left(\overline{\mathds{V}}_{ij}-\frac{\delta_{ij}}{e_i(x+y)}\right)e_j\right]_{x_1^{a_1-1}x_2^{a_2-1}}=\\
       \sum_{w_1,w_2}
       e^{\frac{\lann1,1\rann^{p_i,0+}_{0,2}}{w_1}}e^{\frac{\lann1,1\rann^{p_j,0+}_{0,2}}{w_2}}(-1)^{a_1+a_2}\frac{\mathsf{E}_{ij}^{w_1,w_2}}{w_1^{a_1}w_2^{a_2}}\, .
 \end{multline*}
\end{Lemma}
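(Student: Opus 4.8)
The plan is to extract Lemma \ref{Edge} directly from the chain of identities that immediately precede it, so this is really a bookkeeping argument rather than a new computation. First I would recall the starting point: by the definition \eqref{dfdf6} of $\mathds{V}_{ij}$ and the virtual localization formula of \cite{GP} applied to the graph space $\overline{Q}_{0,2}(\PP^2,d)$ via \eqref{ddgg}, the restriction $\overline{\mathds{V}}_{ij}$ decomposes as a sum over two-vertex, one-edge fixed graphs, refined by the tangent weights $w_1,w_2$ at the two nodes where the edge meets the vertex curves. This gives the factorization
\begin{equation*}
e_i\overline{\mathds{V}}_{ij}^{w_1,w_2}e_j
= \lannn \tfrac{1}{w_1-\psi},\tfrac{1}{x_1-\psi}\rannn^{p_i,0+}_{0,2}\,\mathsf{E}_{ij}^{w_1,w_2}\,
\lannn \tfrac{1}{w_2-\psi},\tfrac{1}{x_2-\psi}\rannn^{p_j,0+}_{0,2}\, ,
\end{equation*}
where the two bracket factors are the contracted-vertex contributions and $\mathsf{E}_{ij}^{w_1,w_2}$ is the pure edge term. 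This is exactly the displayed computation above the lemma statement; I would simply invoke it.

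Next I would substitute the closed form of the unstable $(0,2)$ two-point series from Proposition \ref{GR22}, namely
\begin{equation*}
\lannn \tfrac{1}{w_1-\psi},\tfrac{1}{x_1-\psi}\rannn^{p_i,0+}_{0,2}
= \frac{1}{w_1+x_1}\, e^{\lann 1,1\rann^{p_i,0+}_{0,2}\left(\frac{1}{w_1}+\frac{1}{x_1}\right)}\, ,
\end{equation*}
and similarly for the $p_j$ factor with $w_2,x_2$. Expanding $\frac{1}{w_1+x_1}$ and $\frac{1}{w_2+x_2}$ as geometric series in $x_1/w_1$ and $x_2/w_2$ (legitimate in the relevant completion, since we are extracting fixed coefficients), the product becomes the displayed double sum over $a_1,a_2$ with the signs $(-1)^{a_1+a_2}$ and the monomials $x_1^{a_1-1}x_2^{a_2-1}$ and $w_1^{-a_1}w_2^{-a_2}$. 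Then I would sum over all weight pairs $(w_1,w_2)$, using the identity $e_i\big(\overline{\mathds{V}}_{ij}-\frac{\delta_{ij}}{e_i(x+y)}\big)e_j=\sum_{w_1,w_2}e_i\overline{\mathds{V}}_{ij}^{w_1,w_2}e_j$ recorded just above the lemma (the subtracted term is the unstable degree-$0$ contribution built into $\mathds{V}_{ij}$ by hand).

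Finally I would multiply both sides by $e^{-\lann 1,1\rann^{p_i,0+}_{0,2}/x_1}\,e^{-\lann 1,1\rann^{p_j,0+}_{0,2}/x_2}$ to kill the $x$-dependent exponential prefactors, and read off the coefficient of $x_1^{a_1-1}x_2^{a_2-1}$ on each side; the left side gives the bracketed coefficient in the lemma, and the right side collapses to $\sum_{w_1,w_2} e^{\lann 1,1\rann^{p_i,0+}_{0,2}/w_1}e^{\lann 1,1\rann^{p_j,0+}_{0,2}/w_2}(-1)^{a_1+a_2}\mathsf{E}_{ij}^{w_1,w_2}w_1^{-a_1}w_2^{-a_2}$, which is exactly the claimed formula. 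There is no serious obstacle here; the only point requiring a little care is making sure the coefficient extraction in $x_1,x_2$ commutes with the (infinite) sum over weights $w_1,w_2$ and that the geometric-series expansions are being taken in the correct variables so that the powers of $w_1,w_2$ land with negative exponents as written — but since $\overline{\mathds{V}}_{ij}$ is already a well-defined element of the appropriate localized power series ring, this is routine. I would therefore present the proof as: cite the pre-lemma factorization, insert Proposition \ref{GR22}, expand, sum over weights, and extract coefficients.
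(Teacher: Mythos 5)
Your proposal is correct and takes essentially the same route as the paper: the paper's own proof of Lemma \ref{Edge} is exactly the displayed chain of identities preceding it, namely the vertex--edge--vertex factorization of $e_i\overline{\mathds{V}}_{ij}^{w_1,w_2}e_j$ from localization, the evaluation of the unstable local two-point functions (Proposition \ref{GR22}), the geometric-series expansion of $\frac{1}{w_1+x_1}$ and $\frac{1}{w_2+x_2}$, the summation over weights against $\overline{\mathds{V}}_{ij}-\frac{\delta_{ij}}{e_i(x+y)}$, and the final cancellation of the $x$-dependent exponentials followed by coefficient extraction. No gaps; nothing further is needed.
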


\noindent The notation $[\ldots]_{x_1^{a_1-1}x_2^{a_2-1}}$
in Lemma \ref{Edge} denotes the coefficient of
 $x_1^{a_1-1}x_2^{a_2-1}$ in the series expansion
 of the argument.

\subsection{A simple graph}\label{simgr}
Before treating the general case, we present
the localization formula for a simple graph{\footnote{We
follow here the notation of Section \ref{locq}.}.
Let
$\Gamma\in \mathsf{G}_{g}(\PP^2)$ 
consist of two vertices   and one edge,
$$v_1,v_2\in \Gamma(V)\, , \ \ \ \ 
e\in \Gamma(E)\, $$
with genus and $\T$-fixed point assignments
$$\mathsf{g}(v_i)=g_i\, , \ \ \ \ \mathsf{p}(v_i)=p_i\, .$$

Let $w_1$ and $w_2$ be tangent
weights at the vertices $p_1$ and $p_2$
respectively. Denote by $\text{Cont}_{\Gamma,w_1,w_2}$
the summation of contributions to
\begin{equation}\label{zlzl}
\sum_{d>0} q^d\, \left[\overline{Q}_{g}(K\PP^2,d)\right]^{\vir}
\end{equation}
of $\T$-fixed loci with
tangent weights precisely $w_1$
and $w_2$ on the first rational components
which exit the vertex components over
$p_1$ and $p_2$.
We can express the localization formula for 
\eqref{zlzl} as
$$
\lannn \frac{1}{w_1-\psi}\, \Big|\, \mathsf{H}_{g_1}^{p_1}
\rannn_{g_1,1}^{p_1,0+}
\mathsf{E}^{w_1,w_2}_{12} \lannn\frac{1}{w_2-\psi}\, \Big|\, \mathsf{H}_{g_2}^{p_2}
\rannn_{g_2,1}^{p_2,0+} $$
which equals
$$\sum_{a_1,a_2} e^{\frac{\lann1,1\rann^{p_1,0+}_{0,2}}{w_1}}\frac{\ppl
{\psi^{a_1-1}} \, \Big|\, \mathsf{H}_{g_1}^{p_1}     \ppr_{g_1,1}^{p_1,0+}} {w_1^{a_1}} \mathsf{E}^{w_1,w_2}_{12} e^{\frac{\lann 1,1\rann^{p_2,0+}_{0,2}}{w_2}}\frac{\ppl {\psi^{a_2-1}} \, \Big|\, \mathsf{H}_{g_2}^{p_2}\ppr_{g_2,1}^{p_2,0+}}{w_2^{a_2}}
$$
where $\mathsf{H}_{g_i}^{p_i}$ is
the Hodge class \eqref{hhbb}. We have used here
the notation
\begin{multline*}
\ppl
\psi^{k_1}_1, \ldots,\psi^{k_n}_n \, \Big|\, \mathsf{H}_{h}^{p_i}     \ppr_{h,n}^{p_i,0+} 
=\\
\pP^{k_1,\ldots,k_n,\mathsf{H}_{h}^{p_i}  }_{h,1}\big(\lann 1,1,1\rann_{0,3}^{p_i,0+},\lann 1,1,1,1\rann_{0,4}^{p_i,0+},\ldots \big)
\,
\end{multline*}
and applied \eqref{ppqqpp}.

After summing over all possible weights $w_1,w_2$ and
applying 
Lemma \ref{Edge}, we obtain the following result for the full contribution $$\text{Cont}_\Gamma = \sum_{w_1,w_2} \text{Cont}_{\Gamma,w_1,w_2}$$
of $\Gamma$ to $\sum_{d\geq 0} q^d \left[ \overline{Q}_{g}(K\PP^2,d)\right]^{\vir}$.

\begin{Prop} We have \label{propsim}
 \begin{multline*}
     \text{\em Cont}_{\Gamma}=
     \sum_{a_1,a_2>0}
     \ppl
{\psi^{a_1-1}}  \, \Big|\, \mathsf{H}_{g_1}^{p_i}\,     \ppr_{g_1,1}^{p_i,0+}
\ppl
{\psi^{a_2-1}}  \, \Big|\, \mathsf{H}_{g_2}^{p_j}\,     \ppr_{g_2,1}^{p_j,0+}\ \ \ \ \ \ \ \ \ \ \ \\
\ \ \ \ \ \ \ \ \ \ \cdot
     (-1)^{a_1+a_2}\left[e^{-\frac{\lann1,1\rann^{p_i,0+}_{0,2}}{x_1}}
       e^{-\frac{\lann1,1\rann^{p_j,0+}_{0,2}}{x_2}}e_i\left(\overline{\mathds{V}}_{ij}-\frac{\delta_{ij}}{e_i(x_1+x_2)}\right)e_j\right]_{x_1^{a_1-1}x_2^{a_2-1}}\, .
 \end{multline*}
\end{Prop}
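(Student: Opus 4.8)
The plan is to assemble the formula from pieces already in place: the factorized localization contribution of the two-vertex, one-edge graph, the evaluation \eqref{ppqqpp} of the vertex correlators in terms of the universal polynomials $\pP$, and Lemma \ref{Edge} for the weighted sum over edge contributions. First I would fix the two exit weights $w_1,w_2$ at the vertices $p_1,p_2$ and record the factorization of $\text{Cont}_{\Gamma,w_1,w_2}$ displayed above,
$$\text{Cont}_{\Gamma,w_1,w_2}=\lannn \frac{1}{w_1-\psi}\,\Big|\,\mathsf{H}_{g_1}^{p_1}\rannn_{g_1,1}^{p_1,0+}\,\mathsf{E}^{w_1,w_2}_{12}\,\lannn \frac{1}{w_2-\psi}\,\Big|\,\mathsf{H}_{g_2}^{p_2}\rannn_{g_2,1}^{p_2,0+}\,.$$
This is the standard separation, under the virtual localization formula \cite{GP}, of the contribution of the $\T$-fixed locus attached to $\Gamma$: the two vertex factors carry the Hodge classes $\mathsf{H}_{g_i}^{p_i}$ of \eqref{hhbb}, coming from the contracted genus $g_i$ curves and the restriction of the obstruction bundle $R^1\pi_*\mathsf{S}^3$, together with the node-smoothing poles $\frac{1}{w_i-\psi}$, while $\mathsf{E}^{w_1,w_2}_{12}$ is the pure-edge factor determined by the internal geometry of the edge.

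Next I apply \eqref{ppqqpp} to each vertex correlator, rewriting
$$\lannn \frac{1}{w_i-\psi}\,\Big|\,\mathsf{H}_{g_i}^{p_i}\rannn_{g_i,1}^{p_i,0+}=e^{\lann 1,1\rann^{p_i,0+}_{0,2}/w_i}\sum_{a_i>0}\frac{\ppl \psi^{a_i-1}\,\Big|\,\mathsf{H}_{g_i}^{p_i}\ppr_{g_i,1}^{p_i,0+}}{w_i^{a_i}}\,,$$
which produces exactly the expansion displayed just before the Proposition. I then sum over all pairs $(w_1,w_2)$. Since the vertex polynomials $\ppl \psi^{a_i-1}\,\Big|\,\mathsf{H}_{g_i}^{p_i}\ppr_{g_i,1}^{p_i,0+}$ are independent of the weights, they factor out of the sum, leaving
$$\sum_{w_1,w_2}e^{\lann 1,1\rann^{p_1,0+}_{0,2}/w_1}\,e^{\lann 1,1\rann^{p_2,0+}_{0,2}/w_2}\,\frac{\mathsf{E}^{w_1,w_2}_{12}}{w_1^{a_1}w_2^{a_2}}\,.$$
By Lemma \ref{Edge}, this weighted sum equals $(-1)^{a_1+a_2}$ times the coefficient of $x_1^{a_1-1}x_2^{a_2-1}$ in the series $e^{-\lann1,1\rann^{p_1,0+}_{0,2}/x_1}\,e^{-\lann1,1\rann^{p_2,0+}_{0,2}/x_2}\,e_1\big(\overline{\mathds{V}}_{12}-\frac{\delta_{12}}{e_1(x_1+x_2)}\big)e_2$. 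Substituting this back and summing over $a_1,a_2>0$ yields the claimed formula for $\text{Cont}_\Gamma=\sum_{w_1,w_2}\text{Cont}_{\Gamma,w_1,w_2}$.

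The only genuine content — and the step I expect to be the main obstacle — is the first one: justifying the factorized expression for $\text{Cont}_{\Gamma,w_1,w_2}$. This is the usual Givental-style vertex/edge bookkeeping, and requires checking that on the fixed locus the obstruction bundle and the virtual normal bundle decompose cleanly into (i) vertex contributions over $\overline{M}_{g_i,1|k}$ producing the Hodge factor \eqref{hhbb}, (ii) the two node-smoothing factors $\frac{1}{w_i-\psi}$ at the vertex-edge nodes, and (iii) a pure-edge factor that, after summing over the possible edge chains, coincides with the $\mathsf{E}^{w_1,w_2}_{12}$ extracted from the genus-$0$ two-point series $\overline{\mathds{V}}_{12}$ through the relation between $\overline{\mathds{V}}_{ij}^{w_1,w_2}$ and $\mathsf{E}_{ij}^{w_1,w_2}$ set up just before Lemma \ref{Edge}. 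Once that identification is in place, the remaining steps are formal power-series manipulations combined with the already-established Lemma \ref{Edge} and evaluation \eqref{ppqqpp}.
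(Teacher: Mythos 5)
Your proposal is correct and follows essentially the same route as the paper: the factorized localization expression for $\text{Cont}_{\Gamma,w_1,w_2}$ into vertex correlators times the pure edge factor $\mathsf{E}^{w_1,w_2}_{12}$, the expansion of each vertex correlator via \eqref{ppqqpp}, and then the summation over weights converted by Lemma \ref{Edge} into the coefficient extraction from $\overline{\mathds{V}}_{ij}$. The paper likewise treats the vertex/edge decomposition of the virtual localization formula as the standard Givental-style step and does not elaborate on it further, so your identification of that step as the main content matches the paper's presentation.
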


\subsection{A general graph} We apply the argument of Section \ref{simgr}
to obtain a contribution formula for a general graph $\Gamma$.

Let $\Gamma\in \mathsf{G}_{g,0}(\PP^2)$ be a decorated graph as defined in Section \ref{locq}. The {\em flags} of $\Gamma$ are the 
half-edges{\footnote{Flags are either half-edges or markings.}}. Let $\mathsf{F}$ be the set of flags. 
Let
$$\mathsf{w}: \mathsf{F} \rightarrow \text{Hom}(\T, \com^*)\otimes_{\mathbb{Z}}{\mathbb{Q}}$$
be a fixed assignment of $\T$-weights to each flag.

We first consider the contribution $\text{Cont}_{\Gamma,\mathsf{w}}$ to 
$$\sum_{d\geq 0} q^d \left[\overline{Q}_g(K\PP^2,d)\right]^{\text{vir}}$$
of the $\T$-fixed loci associated $\Gamma$ satisfying
the following property:
the tangent weight on
the first rational component corresponding
to each $f\in \mathsf{F}$ is
exactly given by $\mathsf{w}(f)$.
We have 
\begin{equation}
    \label{s234}
    \text{Cont}_{\Gamma,\mathsf{w}} = \frac{1}{|\text{Aut}(\Gamma)|}
    \sum_{\mathsf{A} \in \ZZ_{> 0}^{\mathsf{F}}} \prod_{v\in \mathsf{V}} \text{Cont}^{\mathsf{A}}_{\Gamma,\mathsf{w}} (v)\prod _{e\in \mathsf{E}} {\text{Cont}}_{\Gamma,\mathsf{w}}(e)\, .
\end{equation}
 The terms on the  right side of \eqref{s234} 
require definition:
\begin{enumerate}
\item[$\bullet$] The sum on the right is over 
the set $\ZZ_{> 0}^{\mathsf{F}}$ of
all maps 
$$\mathsf{A}: \mathsf{F} \rightarrow \ZZ_{> 0}$$
corresponding to the sum over $a_1,a_2$ in
Proposition \ref{propsim}.
\item[$\bullet$]
For $v\in \mathsf{V}$ with 
$n$ incident
flags with $\mathsf{w}$-values $(w_1,\ldots,w_n)$ and
$\mathsf{A}$-values
$(a_1,a_2,...,a_n)$, 
\begin{align*}
    \text{Cont}^{\mathsf{A}}_{\Gamma,{\mathsf{w}}}(v)=
    \frac{\ppl
\psi_1^{a_1-1}, \ldots,
\psi_n^{a_n-1}
\, \Big|\, \mathsf{H}_{\mathsf{g}(v)}^{\mathsf{p}(v)}\,     \ppr_{\mathsf{g}(v),n}^{\mathsf{p}(v),0+}}
{w_1^{a_1} \cdots w_n^{a_n}}\, .
\end{align*}
\item[$\bullet$]
For $e\in \mathsf{E}$ with 
assignments $(\mathsf{p}(v_1), \mathsf{p}(v_2))$
for the two associated vertices{\footnote{In case $e$
is self-edge, $v_1=v_2$.}} and 
$\mathsf{w}$-values $(w_1,w_2)$ for the two associated flags,
    $$    
    \text{Cont}_{\Gamma,\mathsf{w}}(e)=
    e^{\frac{\lann1,1\rann^{\mathsf{p}(v_1),0+}_{0,2}}{w_1}}
    e^{\frac{\lann1,1\rann^{\mathsf{p}(v_2),0+}_{0,2}}{w_2}}
    \mathsf{E}^{w_1,w_2}_{\mathsf{p}(v_1),\mathsf{p}(v_2)}\, .$$
\end{enumerate}
The localization formula then yields \eqref{s234}
just as in the simple case of Section \ref{simgr}.

By summing the contribution \eqref{s234} of $\Gamma$ over
all the weight functions $\mathsf{w}$
and applying Lemma \ref{Edge}, we obtain
the following result which generalizes 
Proposition \ref{propsim}.

\begin{Prop}\label{VE} We have
 $$
 \text{\em Cont}_\Gamma
     =\frac{1}{|\text{\em Aut}(\Gamma)|}
     \sum_{\mathsf{A} \in \ZZ_{> 0}^{\mathsf{F}}} \prod_{v\in \mathsf{V}} 
     \text{\em Cont}^{\mathsf{A}}_\Gamma (v)
     \prod_{e\in \mathsf{E}} \text{\em Cont}^{\mathsf{A}}_\Gamma(e)\, ,
 $$
 where the vertex and edge contributions 
 with incident flag $\mathsf{A}$-values $(a_1,\ldots,a_n)$
 and $(b_1,b_2)$ respectively are
 \begin{eqnarray*}
    \text{\em Cont}^{\mathsf{A}}_\Gamma (v)&=&
    \ppl
\psi_1^{a_1-1}, \ldots,
\psi_n^{a_n-1}
\, \Big|\, \mathsf{H}_{\mathsf{g}(v)}^{\mathsf{p}(v)}\,
  \ppr_{\mathsf{g}(v),n}^{\mathsf{p}(v),0+}\,  ,\\
    \text{\em Cont}^{\mathsf{A}}_\Gamma(e)
    &=&
    (-1)^{b_1+b_2}\left[e^{-\frac{\lann1,1\rann^{\mathsf{p}(v_1),0+}_{0,2}}{x_1}}
       e^{-\frac{\lann1,1\rann^{\mathsf{p}(v_2),0+}_{0,2}}{x_2}}e_i\left(\overline{\mathds{V}}_{ij}-\frac{1}{e_i(x+y)}\right)e_j\right]_{x_1^{b_1-1}x_2^{b_2-1}}\, ,
 \end{eqnarray*}
where $\mathsf{p}(v_1)=p_i$ and $\mathsf{p}(v_2)=p_j$ in the second equation. 
\end{Prop}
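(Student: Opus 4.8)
The plan is to run the Graber--Pandharipande virtual localization formula for $\sum_{d\ge 0}q^d[\overline{Q}_g(K\PP^2,d)]^{\vir}$ exactly as in the two-vertex, one-edge case of Section \ref{simgr}, now carrying the combinatorics of an arbitrary decorated graph $\Gamma\in\mathsf{G}_{g,0}(\PP^2)$. First I would fix, in addition to $\Gamma$, a weight assignment $\mathsf{w}\colon\mathsf{F}\to\mathrm{Hom}(\T,\com^*)\otimes\QQ$ on the flags and describe the corresponding $\T$-fixed loci: at each vertex $v$ one has a genus $\mathsf{g}(v)$ stable curve contracted to $p=\mathsf{p}(v)$ together with the light markings created by the base points (as in Section \ref{hgs}), glued along the incident flags to ``first rational components'' whose tangent weights are prescribed by $\mathsf{w}$, while the remaining edge geometry is the broken locus already analyzed for $\overline{\mathds{V}}_{ij}$ in Section \ref{hgs}. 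The obstruction class $e(\text{Obs})$ and the virtual normal bundle split multiplicatively into vertex and edge pieces, with the cotangent line $\psi$ at the node where a flag $f$ of $\mathsf{A}$-value $a$ meets its vertex producing the factor $\mathsf{w}(f)^{-a}$ together with a $\psi^{a-1}$ insertion on the vertex moduli space; this is exactly formula \eqref{s234}, established verbatim as in Section \ref{simgr}.

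Next I would match the vertex and edge factors of \eqref{s234} with the series of Section \ref{hgs}. For a vertex $v$ with incident flags of weights $(w_1,\dots,w_n)$ and $\mathsf{A}$-values $(a_1,\dots,a_n)$, the sum over the number of light markings is $\lann\psi_1^{a_1-1},\dots,\psi_n^{a_n-1}\,|\,\mathsf{H}_{\mathsf{g}(v)}^{p}\rann_{\mathsf{g}(v),n}^{p,0+}$; by Propositions \ref{GR1}, \ref{GR2} and the wall-crossing Proposition \ref{WC} this equals the polynomial bracket $\ppl\psi_1^{a_1-1},\dots,\psi_n^{a_n-1}\,|\,\mathsf{H}_{\mathsf{g}(v)}^{p}\ppr_{\mathsf{g}(v),n}^{p,0+}$ once the exponential factors $e^{\lann1,1\rann_{0,2}^{p,0+}/w_i}$, one per incident flag, are peeled off. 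Each such exponential is reassigned to the edge carrying that flag, so the edge joining $p_i,p_j$ along flags of weights $(w_1,w_2)$ contributes $e^{\lann1,1\rann^{p_i,0+}_{0,2}/w_1}\,e^{\lann1,1\rann^{p_j,0+}_{0,2}/w_2}\,\mathsf{E}^{w_1,w_2}_{ij}$; the self-edge case $v_1=v_2$ is identical, both flags of the edge simply being attached to the same vertex.

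Finally I would sum \eqref{s234} over all weight assignments $\mathsf{w}$. The sum factors over edges, and a single edge with flag $\mathsf{A}$-values $(b_1,b_2)$ contributes $\sum_{w_1,w_2}e^{\lann1,1\rann^{p_i,0+}_{0,2}/w_1}\,e^{\lann1,1\rann^{p_j,0+}_{0,2}/w_2}\,\mathsf{E}^{w_1,w_2}_{ij}\,w_1^{-b_1}w_2^{-b_2}$, which is precisely the quantity computed in Lemma \ref{Edge}: it equals $(-1)^{b_1+b_2}$ times the coefficient of $x_1^{b_1-1}x_2^{b_2-1}$ in $e^{-\frac{\lann1,1\rann^{p_i,0+}_{0,2}}{x_1}}e^{-\frac{\lann1,1\rann^{p_j,0+}_{0,2}}{x_2}}e_i\left(\overline{\mathds{V}}_{ij}-\frac{\delta_{ij}}{e_i(x+y)}\right)e_j$. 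Substituting this edge coefficient and the vertex brackets into \eqref{s234} and keeping the factor $1/|\mathrm{Aut}(\Gamma)|$ yields the claimed product formula for $\text{Cont}_\Gamma$, with $\text{Cont}^{\mathsf{A}}_\Gamma(v)$ and $\text{Cont}^{\mathsf{A}}_\Gamma(e)$ as stated.

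The content that is genuinely new compared with Section \ref{simgr} is only organizational: no geometric input is needed beyond Lemma \ref{Edge} and Propositions \ref{GR1}--\ref{WC}. I expect the one point requiring real care to be the bookkeeping of the exponential factors $e^{\pm\lann1,1\rann_{0,2}/w}$ --- each internal node is shared between one vertex and one edge, and one must check that the $e^{+\cdots/w}$ appearing in \eqref{s234} and the $e^{-\cdots/x}$ produced when expanding $\overline{\mathds{V}}_{ij}$ via Lemma \ref{Edge} sit on matching half-edges, so that nothing is double counted; together with the $\psi\leftrightarrow w^{-1}$ dictionary the rest is routine.
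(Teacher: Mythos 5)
Your proposal is correct and follows essentially the same route as the paper: localize with a fixed weight assignment on flags to obtain the factorized formula \eqref{s234} exactly as in Section \ref{simgr} (vertex brackets via \eqref{ppqqpp}, i.e.\ Propositions \ref{GR2} and \ref{WC}, with the exponential factors moved onto the edges), then sum over all weight functions and apply Lemma \ref{Edge} edge by edge to produce the coefficient extraction from $\overline{\mathds{V}}_{ij}$. Your closing remark about matching the $e^{\pm\lann 1,1\rann_{0,2}/w}$ factors on half-edges is precisely the bookkeeping the paper performs implicitly, so there is no gap.
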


\subsection{Legs} 
Let $\Gamma \in \mathsf{G}_{g,n}(\PP^2)$ be a decorated graph
with markings. While no markings are needed to define the
stable quotient invariants of $K\PP^2$, the contributions
of decorated graphs with markings will appear in the
proof of the holomorphic anomaly equation.
The formula for the contribution $\text{Cont}_\Gamma(H^{k_1},\ldots,H^{k_n})$
of $\Gamma$ to 
\begin{align*}
    \sum_{d\ge 0}q^d \prod_{j=0}^n \text{ev}^*(H^{k_j})\cap\left[ \overline{Q}_{g,n}(K\PP^2,d)\right]^{\vir} 
\end{align*}
is given by the following result.
\begin{Prop}\label{VEL} We have
 \begin{multline*}
 \text{\em Cont}_\Gamma(H^{k_1},\ldots,H^{k_n})
     =\\\frac{1}{|\text{\em Aut}(\Gamma)|}
     \sum_{\mathsf{A} \in \ZZ_{>0}^{\mathsf{F}}} \prod_{v\in \mathsf{V}} 
     \text{\em Cont}^{\mathsf{A}}_\Gamma (v)
     \prod_{e\in \mathsf{E}} \text{\em Cont}^{\mathsf{A}}_\Gamma(e)
     \prod_{l\in \mathsf{L}} \text{\em Cont}^{\mathsf{A}}_\Gamma(l)\, ,
 \end{multline*}
 where the leg contribution 
 is 
 \begin{eqnarray*}
     \text{\em Cont}^{\mathsf{A}}_\Gamma(l)
    &=&
    (-1)^{\mathsf{A}(l)-1}\left[e^{-\frac{\lann1,1\rann^{\mathsf{p}(l),0+}_{0,2}}{z}}
       \overline{\mathds{S}}_{\mathsf{p}(l)}(H^{k_l})\right]_{z^{\mathsf{A}(l)-1}}\, .
 \end{eqnarray*}
The vertex and edge contributions are same as before.
\end{Prop}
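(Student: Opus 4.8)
The plan is to run the virtual localization formula \cite{GP} on
$$\sum_{d\geq 0} q^d \prod_{j=1}^n \mathrm{ev}_j^*(H^{k_j})\cap \left[\overline{Q}_{g,n}(K\PP^2,d)\right]^{\vir}$$
and to analyze the contribution of each decorated graph $\Gamma\in\mathsf{G}_{g,n}(\PP^2)$ exactly as in the marking-free case of Propositions \ref{propsim} and \ref{VE}, treating a leg as ``half of an edge.'' First I would fix a weight assignment $\mathsf{w}\colon \mathsf{F}\to \mathrm{Hom}(\T,\com^*)\otimes_{\ZZ}\QQ$ on the full flag set $\mathsf{F}$ (half-edges together with the marking-flags, as in the footnote to Section~\ref{hgs}'s general-graph discussion) and isolate the part of the $\T$-fixed locus of $\Gamma$ on which the tangent weight of the first rational component attached along each flag $f$ is exactly $\mathsf{w}(f)$. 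On this sub-locus the virtual fundamental class and the virtual normal bundle split into vertex, edge, and leg pieces; the vertex and edge pieces are literally those already computed in Proposition~\ref{VE}, since inserting $\mathrm{ev}_l^*(H^{k_l})$ at a marking does not touch the Hodge factor $\mathsf{H}^{\mathsf{p}(v)}_{\mathsf{g}(v)}$ of any vertex.

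Next I would identify the leg piece, which is the only genuinely new ingredient. A marking $l$ carrying $\mathrm{ev}_l^*(H^{k_l})$ sits at the end of a (possibly trivial) chain of $\T$-fixed rational components emanating from the vertex component over $\mathsf{p}(l)=p_i$; summing the localization residues of all such chains, with $H^{k_l}$ placed at the marking and a cotangent line $\psi$ recorded at the node where the chain meets the vertex, is by the definition of $\mathds{S}_i$ in Section~\ref{lightm} the series $\overline{\mathds{S}}_{\mathsf{p}(l)}(H^{k_l})$, up to the universal genus-$0$ two-point factor that also appeared in the edge analysis. Stripping that factor is exactly the role of the prefactor $e^{-\lann 1,1\rann^{\mathsf{p}(l),0+}_{0,2}/z}$, and the sign $(-1)^{\mathsf{A}(l)-1}$ records the passage between the expansion variable $z$ of $\mathds{S}$ and the cotangent-line power $\psi^{\mathsf{A}(l)-1}$ at the vertex node, with the same convention as in Lemma~\ref{Edge}; the coefficient extraction $[\,\cdots\,]_{z^{\mathsf{A}(l)-1}}$ then feeds $\psi^{\mathsf{A}(l)-1}$ into the vertex correlator $\ppl \cdots \ppr^{\mathsf{p}(v),0+}_{\mathsf{g}(v),n}$. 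Here the unstable conventions of Section~\ref{intmg}, in particular the degenerate term in $\lannn \frac{1}{x_1-\psi},\frac{1}{x_2-\psi}\rannn_{0,2}$ of Proposition~\ref{GR22} and the correlator $\lann 1,1\rann_{0,2}$, cover the case of a marking lying on a contracted genus-$0$ two-pointed component.

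Finally I would assemble: for fixed $\mathsf{w}$ the contribution factorizes as $\frac{1}{|\mathrm{Aut}(\Gamma)|}\sum_{\mathsf{A}}\prod_v(\cdots)\prod_e(\cdots)\prod_l(\cdots)$, with the vertex and leg factors depending on $\mathsf{w}$ only through monomials $w^{-a}$ in the flag weights; then I sum over all $\mathsf{w}$ and apply Lemma~\ref{Edge} to the edge flags to replace the $\mathsf{E}^{w_1,w_2}_{ij}$'s by the stated coefficients of $e_i\big(\overline{\mathds{V}}_{ij}-\delta_{ij}/e_i(x_1+x_2)\big)e_j$, which is verbatim the passage from \eqref{s234} to Proposition~\ref{VE}. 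Converting the remaining descendent integrals over $\overline{M}_{\mathsf{g}(v),n|k}$ against $\mathsf{H}^{\mathsf{p}(v)}_{\mathsf{g}(v)}$ into the polynomial form $\pP^{\cdots}_{\cdots}$ via Propositions~\ref{GR2} and \ref{WC} yields the formula. The main obstacle is precisely the leg term of the second paragraph: one must check that the resummation of rational chains behind a marking reproduces $\overline{\mathds{S}}_{\mathsf{p}(l)}(H^{k_l})$ with the correct $e^{-\lann 1,1\rann_{0,2}/z}$ normalization and sign, i.e.\ that a leg really behaves as ``half an edge''; once this is granted, the remainder is identical to the already-established Proposition~\ref{VE}.
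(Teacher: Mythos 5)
Your proposal is correct and follows exactly the route the paper intends: Proposition \ref{VEL} is stated with the remark that it "follows the vertex and edge analysis" (the details being left as an exercise, with the Gromov--Witten analogue in Givental's work cited), and your write-up is precisely that analysis — the vertex and edge factors are taken verbatim from Proposition \ref{VE}, and the only new ingredient, the leg factor, is obtained by resumming the rational chains behind a marking into $\overline{\mathds{S}}_{\mathsf{p}(l)}(H^{k_l})$ and stripping the two-point normalization $e^{-\lann 1,1\rann^{\mathsf{p}(l),0+}_{0,2}/z}$ with the sign convention of Lemma \ref{Edge}. No gap; this matches the intended argument.
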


The proof of Proposition \ref{VEL} 
follows the vertex and edge analysis. We leave the
details as an exercise for the reader.
The parallel statement for Gromov-Witten theory
can be found in \cite{Elliptic, SS,Book}.

\section{Vertices, edges, and legs} \label{svel}
\subsection{Overview}
Using the results of Givental \cite{Elliptic,SS,Book} combined with wall-crossing  \cite{CKg}, we calculate here 
the vertex and edge contributions
 in terms of the function $R_k$ of Section \ref{furcalc}.

\subsection{Calculations in genus 0}
We follow the notation introduced in Section \ref{intmg}. Recall
the series
$$T(c)=t_0 +t_1 c+t_2 c^2+\ldots\, .$$ 

\begin{Prop} {\em (Givental \cite{Elliptic,SS,Book})} For $n\geq 3$, we have
\begin{multline*}
\lann
 1,\ldots,1\rann_{0,n}^{p_i,\infty} = \\
 \left(\sum_{k\geq 0}\frac{1}{k!}\int_{\overline{M}_{0,n+k}}T(\psi_{n+1})\cdots T(\psi_{n+k})\right)\Big|_{t_0=0,t_1=0,t_{j\ge 2}=(-1)^j\frac{Q_{j-1}}{\lambda_i^{j-1}}}
 \end{multline*}
 where the functions $Q_l$ are defined by 
 \begin{align*}
     \overline{\mathds{S}}^{\infty}_i(1) =  e_i \Big\langle \Big\langle  \frac{\phi _i}{z-\psi} , 1 
\Big\rangle \Big\rangle _{0, 2}^{p_i,\infty}=e^{\frac{\lann1,1\rann^{p_i,\infty}_{0,2}}{z}}
\left( \sum_{l=0}^\infty Q_l \left(\frac{z}{\lambda_i}\right)^{l}\right)\, .
 \end{align*}
\end{Prop}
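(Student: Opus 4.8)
The plan is to unwind the definition of the left-hand side and thereby reduce the Proposition to an identification of the abstractly defined series $S_{p_i}$ with the asymptotic data $Q_l$ of $\overline{\mathds{S}}^{\infty}_i(1)$. Since $2g-2+n>0$ when $(g,n)=(0,n)$ with $n\geq 3$, and the class is $\gamma=1$, the Definition preceding Proposition \ref{WC} gives at once
$$\lann 1,\ldots,1\rann_{0,n}^{p_i,\infty}\ =\ \sum_{k\geq 0}\frac{1}{k!}\int_{\overline{M}_{0,n+k}}\ \prod_{j=1}^k S_{p_i}(\psi_{n+j})\,,$$
so the assertion is equivalent to the single formula
$$S_{p_i}(c)\ =\ \sum_{j\geq 2}(-1)^j\,\frac{Q_{j-1}}{\lambda_i^{\,j-1}}\,c^j\,,$$
i.e. $S_{p_i}$ has no constant or linear term and its higher coefficients are the rescaled, signed coefficients occurring in the expansion of $\overline{\mathds{S}}^{\infty}_i(1)$. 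It remains to compute $S_{p_i}$.

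To do this I would apply the same genus-$0$ localization reduction to the two-pointed series $\overline{\mathds{S}}^{\infty}_i(1)=e_i\lannn\frac{\phi_i}{z-\psi},1\rannn_{0,2}^{p_i,\infty}$. On the fixed loci lying over $p_i$ the Hodge bundle $\mathbb{E}_0$ vanishes, $\phi_i|_{p_i}=1$, and the factors $e_i$ and $1/e(\text{Nor})$ cancel, so $\overline{\mathds{S}}^{\infty}_i(1)$ is expressed as a sum over a contracted central $\PP^1$ carrying the insertion $\frac{1}{z-\psi}$ at one node, the class $1$ at the other marking, and $S_{p_i}$-dressings at the remaining $k$ nodes. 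Summing this series in closed form by the string and dilaton equations — exactly as in Propositions \ref{GR2} and \ref{GR22}, with the unstable $(0,2)$ term of Proposition \ref{GR22} supplying the degenerate contribution — produces the prefactor $e^{\lann 1,1\rann^{p_i,\infty}_{0,2}/z}$ times a power series in $z$. Comparing with the normal form $e^{\lann 1,1\rann^{p_i,\infty}_{0,2}/z}\big(\sum_{l\geq 0}Q_l(z/\lambda_i)^l\big)$, which is available from the Birkhoff-factorization relations \eqref{S1} and the Picard–Fuchs asymptotics \eqref{assym} of Section \ref{furcalc}, one reads off the coefficients of $S_{p_i}$. The change of variable $z\mapsto -z$ relating the descendent insertion $\frac{1}{z-\psi}$ to the ancestor side accounts for the sign $(-1)^j$; the normalization $Q_0=1$ (the leading term of the $\mathds{S}$-matrix) is absorbed into the exponential prefactor and forces $t_1=0$, while $t_0=0$ reflects that every $\T$-fixed rational tail carries positive degree and hence contributes at least one power of $\psi$ at its attaching node.

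The main obstacle is precisely this last matching: checking that the genus-$0$ localization output of $\overline{\mathds{S}}^{\infty}_i(1)$ is reproduced by the normal form with the $Q_l$ sitting in the stated positions, with the correct signs and powers of $\lambda_i$, and that the delicate unstable $(0,2)$ contributions are bookkept consistently. In essence this is Givental's theorem (\cite{Elliptic,SS,Book}) transported to the stable-quotient theory of $K\PP^2$; the only genuinely new ingredient is the identification of the relevant $S$-matrix entry with $\overline{\mathds{S}}^{\infty}_i(1)$, which is already furnished by \eqref{S1} and \eqref{assym}. To pin down the signs and normalizations unambiguously before running the general argument, I would first verify the claimed formula directly for $n=3$ (where $\overline{M}_{0,3}$ is a point) to the first few orders in $q$.
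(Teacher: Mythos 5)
The paper itself offers no proof of this Proposition: it is quoted from Givental \cite{Elliptic,SS,Book}, with the existence of the asymptotic expansion deferred to \cite[Theorem 5.4.1]{CKg0}. So your attempt has to be judged as a reconstruction of Givental's localization argument, and there it has a genuine gap at its central step. Unwinding the definition to $\lann 1,\ldots,1\rann_{0,n}^{p_i,\infty}=\sum_{k\geq 0}\frac{1}{k!}\int_{\overline{M}_{0,n+k}}\prod_{j=1}^k S_{p_i}(\psi_{n+j})$ is fine, but the Proposition is \emph{not} equivalent to the term-by-term identity $S_{p_i}(c)=\sum_{j\geq 2}(-1)^j Q_{j-1}c^j/\lambda_i^{\,j-1}$, and that identity is false: a $\T$-fixed rational tail of degree $d>0$ attached at a node contributes a factor of the shape $\mathrm{Cont}(\mathrm{tail})/(w(w-\psi))$, whose expansion begins with a nonzero $\psi^0$ term, so $S_{p_i}$ has nonvanishing constant and linear parts in $\psi$ in every positive $Q$-degree. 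In particular your stated reason for $t_0=0$ (``every tail carries positive degree and hence contributes at least one power of $\psi$ at its attaching node'') is incorrect. The nonvanishing of the string direction is visible in the paper itself: $\lann 1,1\rann^{p_i,0+}_{0,2}=\mu\lambda_i\neq 0$, and the exponential prefactor $e^{\lann 1,1\rann^{p_i,\infty}_{0,2}/z}$ is factored out of $\overline{\mathds{S}}^{\infty}_i(1)$ precisely because the low-order part of the tail series is there and has to be removed.

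The real content of the statement is exactly this removal: one must show that the vertex generating function built from the un-normalized tail series equals the one built with $t_0=t_1=0$ and $t_{j\geq 2}=(-1)^jQ_{j-1}/\lambda_i^{\,j-1}$, the $t_0$-direction being traded, via the string equation (the operator $\mathds{L}$ of Lemma \ref{stst}), for the factor $e^{\lann 1,1\rann_{0,2}/z}$ that then reappears attached to edges and legs (Lemma \ref{Edge}, Propositions \ref{propsim}, \ref{VE}, \ref{VEL}), and the $t_1$-direction being handled by the dilaton equation together with $Q_0=1$. You do invoke the string and dilaton equations, but only to sum the localization of $\overline{\mathds{S}}^{\infty}_i(1)$ in closed form, not to perform this resummation of the vertex sum, which is where the identity actually lives. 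Two further slips: the asymptotics \eqref{S1} and \eqref{assym} you appeal to are for the $0+$ (quasimap) series, whereas this Proposition lives on the $\infty$ (stable maps) side — in the paper the two are related only afterwards by wall-crossing, $Q_l(Q(q))=R_l(q)$ — and ending with ``in essence this is Givental's theorem'' concedes rather than proves the key matching. As it stands the proposal is a plausible outline of the standard argument, but the step that makes the Proposition true is missing and the intermediate statement you reduce to is not correct.
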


The existence of the above asymptotic expansion of $\overline{\mathds{S}}^\infty_i(1)$ can also be proven by the argument of  \cite[Theorem 5.4.1]{CKg0}.
Similarly, we have an asymptotic expansion of $\overline{\mathds{S}}_i(1)$,
\begin{align*}
    \overline{\mathds{S}}_i(1) =e^{\frac{\lann1,1\rann^{p_i,0+}_{0,2}}{z}}
\left( \sum_{l=0}^\infty R_l \left(\frac{z}{\lambda_i}\right)^{l}\right)\, .
\end{align*}
By \eqref{VS}, we have
\begin{align*}
    \lann1,1\rann^{p_i,0+}_{0,2} = \mu \lambda_i.
\end{align*}

After applying the wall-crossing result of Proposition \ref{WC}, we \
obtain
\begin{eqnarray*}
    \lann
 1,\ldots,1\rann_{0,n}^{p_i,\infty}(Q(q)) &=&\lann
 1,\ldots,1\rann_{0,n}^{p_i,0+}(q), \\
 \overline{\mathds{S}}^{\infty}_i(1)(Q(q))&=&\overline{\mathds{S}}_i(1)(q),
\end{eqnarray*}
where $Q(q)$ is mirror map for $K\PP^2$ as before. By comparing asymptotic expansions of $\overline{\mathds{S}}^{\infty}_i(1)$ and $\overline{\mathds{S}}_i(1)$, we get
a wall-crossing relation between $Q_l$ and $R_l$,
\begin{align*}
 Q_l(Q(q))=R_l(q)\, .
\end{align*}
We have proven the following result.

\begin{Prop}\label{q2q2} For $n\geq 3$, we have \label{zaa3}
\begin{multline*}
\lann
 1,\ldots,1\rann_{0,n}^{p_i,0+} = \\
 \left(\sum_{k\geq 0}\frac{1}{k!}\int_{\overline{M}_{0,n+k}}T(\psi_{n+1})\cdots T(\psi_{n+k})\right)\Big|_{t_0=0,t_1=0,t_{j\ge 2}=(-1)^j\frac{R_{j-1}}{\lambda_i^{j-1}}}\, .
 \end{multline*}
\end{Prop}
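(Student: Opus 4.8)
The plan is to deduce the statement from Givental's genus~$0$ localization expression on the stable-maps side together with the wall-crossing comparison of Proposition~\ref{WC}. First I would invoke Givental's formula \cite{Elliptic,SS,Book} (the Proposition stated just above), which expresses the $\infty$-theory vertex correlator $\lann 1,\ldots,1\rann_{0,n}^{p_i,\infty}$ as a sum of integrals over $\overline{M}_{0,n+k}$ with the series $T(\psi)$ specialized at $t_0=t_1=0$ and $t_{j\ge 2}=(-1)^j Q_{j-1}/\lambda_i^{j-1}$, where the $Q_l$ are read off from the asymptotic expansion of $\overline{\mathds{S}}^{\infty}_i(1)$. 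The key structural point is that, apart from the coefficients $Q_l$, the right-hand side of this formula is an intrinsic intersection number on $\overline{M}_{0,n+k}$ with no further dependence on the curve-counting variable; so under a change of Novikov variable only the tuple $(Q_l)$ is affected.

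Next I would record the parallel asymptotic expansion on the stable-quotients side. From \eqref{VS} one has $\overline{\mathds{S}}_i(1)=e^{\lann 1,1\rann^{p_i,0+}_{0,2}/z}\big(\sum_{l\ge 0}R_l (z/\lambda_i)^l\big)$ with $\lann 1,1\rann^{p_i,0+}_{0,2}=\mu\lambda_i$, the functions $R_l$ being exactly those of Section~\ref{furcalc}; existence of this expansion follows from the $(0+)$-version of the argument of \cite[Theorem 5.4.1]{CKg0}, so nothing new is needed. The heart of the proof is then the wall-crossing step: applying Proposition~\ref{WC} both to the correlators $\lann 1,\ldots,1\rann_{0,n}^{p_i}$ and to the two-point series defining $\overline{\mathds{S}}_i(1)$ gives
$$\lann 1,\ldots,1\rann_{0,n}^{p_i,\infty}(Q(q))=\lann 1,\ldots,1\rann_{0,n}^{p_i,0+}(q),\qquad \overline{\mathds{S}}^{\infty}_i(1)(Q(q))=\overline{\mathds{S}}_i(1)(q),$$
where $Q(q)=\exp(I_1^{K\PP^2}(q))$ is the mirror map of Section~\ref{holp2}. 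Comparing the two asymptotic expansions term by term (first matching the exponential prefactors $e^{\lann1,1\rann_{0,2}/z}$, then the coefficients of $(z/\lambda_i)^l$) yields $Q_l(Q(q))=R_l(q)$ for all $l$.

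Finally, substituting $q\mapsto Q(q)$ into Givental's formula and using $Q_l(Q(q))=R_l(q)$ replaces every $Q_{j-1}$ by $R_{j-1}$, while the first wall-crossing identity replaces the left-hand side by $\lann 1,\ldots,1\rann_{0,n}^{p_i,0+}(q)$; this is precisely the asserted formula. The step I expect to require the most care is the bookkeeping in this last substitution: one must verify that $Q\mapsto Q(q)$ genuinely affects Givental's expression only through the $t_j$ via the $Q_l$ and not through the $\overline{M}_{0,n+k}$-integrals (true since those integrals are independent of the Novikov variable), and that matching the $z^0$-terms of the two $\mathds{S}$-expansions correctly identifies $\lann1,1\rann^{p_i,0+}_{0,2}=\mu\lambda_i$ so that the exponential factors on the two sides coincide before the coefficients $R_l$ and $Q_l$ are compared. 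Given the preparatory results already established, this amounts to careful tracking of specializations rather than any new geometric input.
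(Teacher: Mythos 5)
Your proposal is correct and follows essentially the same route as the paper: Givental's formula for $\lann 1,\ldots,1\rann_{0,n}^{p_i,\infty}$ with the $Q_l$ read off from the asymptotic expansion of $\overline{\mathds{S}}^{\infty}_i(1)$, the parallel expansion of $\overline{\mathds{S}}_i(1)$ with coefficients $R_l$ and prefactor $e^{\mu\lambda_i/z}$, and the wall-crossing identities of Proposition \ref{WC} applied to both the correlators and the $\mathds{S}$-series to conclude $Q_l(Q(q))=R_l(q)$ and hence the stated formula. No substantive difference from the paper's argument.
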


 Proposition \ref{q2q2} immediately implies the evaluation
\begin{equation} \label{fxxf}
\lann
 1,1,1\rann_{0,3}^{p_i,0+}=1\, .
 \end{equation}
Another simple consequence of Proposition \ref{zaa3} is the following 
 basic property.
\begin{Cor}\label{Poly} For $n\geq 3$, we have
 $
 \lann
 1,\ldots,1\rann_{0,n}^{p_i,0+} \in \CC[R_1,R_2,...][\lambda_i^{-1}]
 $.
\end{Cor}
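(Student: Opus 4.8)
The plan is to deduce the statement directly from the evaluation in Proposition \ref{zaa3}, which expresses
$$\lann 1,\ldots,1\rann_{0,n}^{p_i,0+} = \left(\sum_{k\geq 0}\frac{1}{k!}\int_{\overline{M}_{0,n+k}}T(\psi_{n+1})\cdots T(\psi_{n+k})\right)\Big|_{t_0=0,\, t_1=0,\, t_{j\ge 2}=(-1)^j\frac{R_{j-1}}{\lambda_i^{j-1}}}\, .$$
So the entire content of the corollary is to check that the right-hand side is a polynomial in the series $R_1,R_2,\ldots$ with coefficients in $\CC[\lambda_i^{-1}]$.

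The first step is to note that the sum over $k$ is actually finite. After the restriction $t_0=t_1=0$ the series $T(c)=t_2c^2+t_3c^3+\cdots$ starts in degree $2$, so every monomial occurring in $T(\psi_{n+1})\cdots T(\psi_{n+k})$ has total $\psi$-degree at least $2k$. Since $\dim\overline{M}_{0,n+k}=n+k-3$, the integral vanishes unless $2k\le n+k-3$, i.e.\ $k\le n-3$; hence only the terms $k=0,1,\ldots,n-3$ contribute. This dimension count is the only genuine argument required, and it is precisely the finiteness already used in Section \ref{intmg}.

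For each surviving $k$, expanding the product exhibits $\int_{\overline{M}_{0,n+k}}T(\psi_{n+1})\cdots T(\psi_{n+k})$ as a finite $\QQ$-linear combination of monomials $t_{l_1}\cdots t_{l_k}$ with all $l_j\ge 2$, the coefficients being genus $0$ descendent integrals, hence rational numbers. Substituting $t_{l_j}=(-1)^{l_j}R_{l_j-1}/\lambda_i^{l_j-1}$ replaces each such monomial by $\pm\, R_{l_1-1}\cdots R_{l_k-1}\,\lambda_i^{-(l_1+\cdots+l_k-k)}$. Every index $l_j-1$ is at least $1$, so this is a genuine polynomial in $R_1,R_2,\ldots$, and the exponent $l_1+\cdots+l_k-k$ is non-negative (on nonvanishing terms it equals $n-3$), so only non-positive powers of $\lambda_i$ appear. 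Summing the finitely many contributions displays $\lann 1,\ldots,1\rann_{0,n}^{p_i,0+}$ as an element of $\CC[R_1,R_2,\ldots][\lambda_i^{-1}]$; for $n=3$ this recovers \eqref{fxxf}. There is no real obstacle beyond this bookkeeping, since the substantive input --- the identification via wall-crossing of the substituted variables with the $R_j$ --- is already contained in Proposition \ref{zaa3}.
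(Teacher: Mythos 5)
Your proof is correct and follows essentially the same route as the paper, which presents Corollary \ref{Poly} as an immediate consequence of Proposition \ref{q2q2}: restricting $t_0=t_1=0$ makes the sum finite by the dimension count, and the substitution $t_{j}=(-1)^j R_{j-1}/\lambda_i^{j-1}$ visibly lands in $\CC[R_1,R_2,\ldots][\lambda_i^{-1}]$. Your bookkeeping (including the observation that the $\lambda_i$-exponent on nonvanishing terms is $-(n-3)$) is exactly the intended ``simple consequence.''
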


\subsection{Vertex and edge analysis}
By Proposition \ref{VE}, we have decomposition of the
contribution to $\Gamma\in \mathsf{G}_{g}(\PP^2)$ to
the stable quotient theory of 
$K\PP^2$ 
into vertex terms and edge terms
$$
 \text{Cont}_\Gamma
     =\frac{1}{|\text{Aut}(\Gamma)|}
     \sum_{\mathsf{A} \in \ZZ_{> 0}^{\mathsf{F}}} \prod_{v\in \mathsf{V}} 
     \text{Cont}^{\mathsf{A}}_\Gamma (v)
     \prod_{e\in \mathsf{E}} \text{Cont}^{\mathsf{A}}_\Gamma(e)\, .
 $$


\begin{Lemma}\label{L1} We have
    $\text{\em Cont}^{\mathsf{A}}_\Gamma (v)\in \CC(\lambda_0,\lambda_1,\lambda_2)[L^{\pm1}]$. 
\end{Lemma}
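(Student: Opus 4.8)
The plan is to trace the definition of $\text{Cont}^{\mathsf{A}}_\Gamma (v)$ through the earlier formalism and show that every ingredient lands in the claimed ring. Recall from Proposition \ref{VE} that for a vertex $v$ with $n$ incident flags carrying $\mathsf{A}$-values $(a_1,\ldots,a_n)$,
$$\text{Cont}^{\mathsf{A}}_\Gamma (v)=\ppl\psi_1^{a_1-1},\ldots,\psi_n^{a_n-1}\,\Big|\,\mathsf{H}_{\mathsf{g}(v)}^{\mathsf{p}(v)}\ppr_{\mathsf{g}(v),n}^{\mathsf{p}(v),0+}\, ,$$
which by the definition of the bracket $\ppl\cdots\ppr$ equals the universal polynomial $\pP^{a_1-1,\ldots,a_n-1,\mathsf{H}^{\mathsf{p}(v)}_{\mathsf{g}(v)}}_{\mathsf{g}(v),n}$ evaluated at the genus $0$ correlators $\lann 1,\ldots,1\rann^{p_i,0+}_{0,m}$. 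So the statement reduces to two inputs: (a) control of the arguments $\lann 1,\ldots,1\rann^{p_i,0+}_{0,m}$, and (b) control of the Hodge insertion $\mathsf{H}^{p_i}_g$ that sits inside $\gamma$.

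First I would handle (a). By Corollary \ref{Poly}, each $\lann 1,\ldots,1\rann^{p_i,0+}_{0,m}\in\CC[R_1,R_2,\ldots][\lambda_i^{-1}]$, and by Proposition \ref{RPoly} each $R_k\in\CC[L^{\pm1}]$; hence these correlators lie in $\CC(\lambda_0,\lambda_1,\lambda_2)[L^{\pm1}]$ (after the specialization \eqref{spez}, $\lambda_i^{-1}$ is a unit times $\lambda_0^{-1}$, which is allowed in the coefficient field $\CC(\lambda_0,\lambda_1,\lambda_2)$). Since $\pP$ is a \emph{rational} function of the $s_i$, I should note that the relevant denominators are powers of $\lann 1,1,1\rann^{p_i,0+}_{0,3}=1$ by \eqref{fxxf}, so in fact $\pP$ evaluated at these arguments is a genuine polynomial expression in the higher correlators — no spurious poles in $L$ are introduced. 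Thus the ``$T$-part'' of the vertex contribution already lies in $\CC(\lambda_0,\lambda_1,\lambda_2)[L^{\pm1}]$.

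Next I would handle (b), the Hodge class $\mathsf{H}^{p_i}_g=\frac{e(\mathbb{E}_g^*\otimes T_{p_i}\PP^2)}{e(T_{p_i}\PP^2)}\cdot\frac{e(\mathbb{E}_g^*\otimes(-3\lambda_i))}{-3\lambda_i}$ from \eqref{hhbb}. Expanding the equivariant Euler classes of the twisted Hodge bundles via Mumford's relation gives a polynomial in the $\lambda$'s and the $\lambda_i$, $\lambda_j$ combinations with coefficients that are polynomials in the $\lambda_i$-chern classes of $\mathbb{E}_g$; these $\lambda$-classes get absorbed into the universal polynomial $\pP$ (i.e.\ they contribute to specific genus $0$ correlators with extra $\lambda$-class insertions, still expressible in the $R_k$'s by the same Givental-type reconstruction as in Proposition \ref{q2q2} / Corollary \ref{Poly}, extended to allow a Hodge-class insertion). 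The upshot is that introducing $\mathsf{H}^{p_i}_g$ only enlarges the argument list by correlators of the same shape, all of which again lie in $\CC(\lambda_0,\lambda_1,\lambda_2)[L^{\pm1}]$. Combining (a) and (b): $\text{Cont}^{\mathsf{A}}_\Gamma(v)$ is a polynomial in quantities all lying in $\CC(\lambda_0,\lambda_1,\lambda_2)[L^{\pm1}]$, hence lies there itself.

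The main obstacle I anticipate is (b): one must be careful that the reconstruction of genus $0$ correlators \emph{with a Hodge-class insertion} $\lann 1,\ldots,1\,|\,\mathsf{H}^{p_i}_g\rann^{p_i,0+}_{0,m}$ still outputs elements of $\CC(\lambda_0,\lambda_1,\lambda_2)[L^{\pm1}]$ — i.e.\ that Propositions \ref{GR1}–\ref{GR2} and the wall-crossing of Proposition \ref{WC} apply verbatim with $\gamma=\mathsf{H}^{p_i}_g$ and that the extra $\lambda$-class corrections on $\overline{M}_{0,m}$ do not spoil polynomiality in $L$. This is where I would spend the care: verify that the $\lann 1,1\rann_{0,2}^{p_i,0+}=\mu\lambda_i$ normalization and the string/dilaton reductions of Section \ref{intmg} go through unchanged, so that the denominators remain powers of $\lann 1,1,1\rann^{p_i,0+}_{0,3}=1$. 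Everything else is bookkeeping.
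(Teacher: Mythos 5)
Your core argument is the paper's: write $\text{Cont}^{\mathsf{A}}_\Gamma(v)$ via Proposition \ref{VE} as $\ppl \psi_1^{a_1-1},\ldots,\psi_n^{a_n-1}\,|\,\mathsf{H}^{\mathsf{p}(v)}_{\mathsf{g}(v)}\ppr^{\mathsf{p}(v),0+}_{\mathsf{g}(v),n}$, observe that this is a polynomial in $1/\lann 1,1,1\rann^{\mathsf{p}(v),0+}_{0,3}$ and the correlators $\lann 1,\ldots,1\rann^{\mathsf{p}(v),0+}_{0,m}$, $m\geq 4$, with coefficients in $\CC(\lambda_0,\lambda_1,\lambda_2)$, and conclude by the evaluation \eqref{fxxf}, Corollary \ref{Poly} and Proposition \ref{RPoly}. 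Your step (a) is exactly this and is complete.

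The obstacle you single out in step (b), however, is not a real one, and your description of it is slightly off. In the formalism of Section \ref{intmg}, the class $\gamma=\mathsf{H}^{p_i}_{\mathsf{g}(v)}$ is carried along as the insertion in the correlator $\lann \psi^{a_1},\ldots,\psi^{a_n}\,|\,\gamma\,\rann_{g,n}$; the universal function $\pP^{a_1,\ldots,a_n,\gamma}_{g,n}$ is \emph{defined} for arbitrary $\gamma$, and Propositions \ref{GR1}, \ref{GR2} and the wall-crossing of Proposition \ref{WC} are stated for arbitrary $\gamma$ as well. The Hodge factors therefore never migrate into the arguments of $\pP$: the arguments remain the insertion-free genus $0$ correlators $\lann 1,\ldots,1\rann^{p_i,0+}_{0,m}$, while $\gamma$ only enters through the finitely many integrals over $\overline{M}_{\mathsf{g}(v),n+k}$ that form the coefficients of $\pP$, and these coefficients lie in $\CC(\lambda_0,\lambda_1,\lambda_2)$ because the equivariant weights in \eqref{hhbb} are monomials in the $\lambda_i$. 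In particular there are no ``genus $0$ correlators with extra $\lambda$-class insertions'' to control (on a genus $0$ vertex the Hodge bundle has rank $0$, so $\mathsf{H}^{p_i}_0$ is just a constant in $\CC(\lambda_0,\lambda_1,\lambda_2)$), and no extension of Proposition \ref{q2q2} or Corollary \ref{Poly} beyond what is already stated is required. With that clarification your argument coincides with the paper's proof.
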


\begin{proof} By Proposition \ref{VE}, 
$$\text{Cont}^{\mathsf{A}}_\Gamma (v) = 
    \ppl
\psi_1^{a_1-1}, \ldots,
\psi_n^{a_n-1}
\, \Big|\, \mathsf{H}_{\mathsf{g}(v)}^{\mathsf{p}(v)}\,
  \ppr_{\mathsf{g}(v),n}^{\mathsf{p}(v),0+}\, .$$
 The right side of the above
  formula is a polynomial 
  in the variables
$$\frac{1}{\lann 1,1,1\rann^{\mathsf{p}(v),0+}_{0,3}}\ \ \ 
\text{and} \ \ \
 \Big\{ \, \lann 1,\ldots,1\rann^{\mathsf{p}(v),0+}_{0,n}\, |_{t_0=0} \, \Big\}_{n\geq  4}\, $$
 with coefficients in $\mathbb{C}(\lambda_0,\lambda_1,\lambda_2)$.
The Lemma then follows from 
the evaluation \eqref{fxxf}, Corollary \ref{Poly},
and 
Proposition \ref{RPoly}. 
\end{proof}

Let $e\in \mathsf{E}$ be an edge connecting the $\T$-fixed points $p_i, p_j \in \PP^2$. Let
the $\mathsf{A}$-values of the respective
half-edges be $(k,l)$.

\begin{Lemma}\label{L2} We have
 $\text{\em Cont}^{\mathsf{A}}_\Gamma(e) \in \CC(\lambda_0,\lambda_1,\lambda_2)[L^{\pm1},X]$ and
 \begin{enumerate}
 \item[$\bullet$]
 the degree of $\text{\em Cont}^{\mathsf{A}}_\Gamma(e)$ with respect to $X$ is $1$,
 \item[$\bullet$]
 the coefficient of $X$ in 
 $\text{\em Cont}^{\mathsf{A}}_\Gamma(e)$
 is 
 $$(-1)^{k+l}\frac{3 R_{1\,k-1} R_{1\,l-1}}{L \lambda_i^{k-2} \lambda_j^{l-2}}\, .$$ 
 \end{enumerate}
\end{Lemma}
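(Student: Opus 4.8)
The plan is to compute $\text{Cont}^{\mathsf{A}}_\Gamma(e)$ directly from its definition in Proposition \ref{VE},
$$\text{Cont}^{\mathsf{A}}_\Gamma(e)=(-1)^{k+l}\left[e^{-\frac{\lann1,1\rann^{p_i,0+}_{0,2}}{x_1}}e^{-\frac{\lann1,1\rann^{p_j,0+}_{0,2}}{x_2}}e_i\left(\overline{\mathds{V}}_{ij}-\frac{\delta_{ij}}{e_i(x_1+x_2)}\right)e_j\right]_{x_1^{k-1}x_2^{l-1}}\, ,$$
using the WDVV-type relation \eqref{wdvv}, which after restriction to $t=0$ reads
$$e_i\overline{\mathds{V}}_{ij}(x_1,x_2)e_j=\frac{\sum_{m=0}^2 \overline{\mathds{S}}_i(\phi_m)|_{z=x_1}\,\overline{\mathds{S}}_j(\phi^m)|_{z=x_2}}{x_1+x_2}\, .$$
First I would rewrite $\sum_m \overline{\mathds{S}}_i(\phi_m)|_{z=x_1}\overline{\mathds{S}}_j(\phi^m)|_{z=x_2}$ in terms of the basis $\overline{\mathds{S}}_i(H^a)$, $a=0,1,2$: since $\phi_m$ and $\phi^m$ are cycle classes built from $H$ and the weights, the pairing $\sum_m \phi_m\otimes\phi^m$ expands into a $\CC(\lambda_0,\lambda_1,\lambda_2)$-combination of $H^a\otimes H^b$ with $a+b\le 2$ (using $H^3=\lambda_0^3$ under the specialization \eqref{spez}). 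So the edge term becomes a finite sum of products $\overline{\mathds{S}}_i(H^a)|_{z=x_1}\cdot\overline{\mathds{S}}_j(H^b)|_{z=x_2}$ divided by $x_1+x_2$, with explicit rational-function-in-$\lambda$ coefficients.

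Next I would substitute the asymptotic expansions \eqref{VS}: $\overline{\mathds{S}}_i(1)=e^{\mu\lambda_i/x_1}\sum_p R_{0p}(x_1/\lambda_i)^p$, $\overline{\mathds{S}}_i(H)=e^{\mu\lambda_i/x_1}\frac{L\lambda_i}{C_1}\sum_p R_{1p}(x_1/\lambda_i)^p$, and $\overline{\mathds{S}}_i(H^2)=e^{\mu\lambda_i/x_1}\frac{L^2\lambda_i^2}{C_1C_2}\sum_p R_{2p}(x_1/\lambda_i)^p$ (and likewise at $p_j$ with $x_2$). Since $\lann1,1\rann^{p_i,0+}_{0,2}=\mu\lambda_i$, the exponential prefactors $e^{-\mu\lambda_i/x_1}e^{-\mu\lambda_j/x_2}$ in the definition of $\text{Cont}^{\mathsf{A}}_\Gamma(e)$ exactly cancel the $e^{\mu\lambda_i/x_1}e^{\mu\lambda_j/x_2}$ coming from the $\overline{\mathds{S}}$'s. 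What remains is $\frac{1}{x_1+x_2}$ times a product of two power series in $x_1$ (resp.\ $x_2$) with coefficients in $\CC(\lambda)[L^{\pm1},C_1^{-1},C_2^{-1}]$ whose $R$-coefficients are, by Proposition \ref{RPoly} and Lemma \ref{RPoly2}, polynomials in $L^{\pm1}$ except for the single controlled occurrence $R_{2p}=Q_{2p}-\frac{R_{1\,p-1}}{L}X$. Extracting the coefficient of $x_1^{k-1}x_2^{l-1}$ — expanding $\frac{1}{x_1+x_2}$ as a geometric series — gives a finite sum, hence a polynomial expression; the relations \eqref{c1c2l}, namely $C_0C_1C_2=(1+27q)^{-1}=L^3$ and $C_0=C_1$, let me eliminate $C_2=L^3/C_1^2$ and all explicit $C_1$-denominators against the $e_ie_j$ normalization, landing in $\CC(\lambda_0,\lambda_1,\lambda_2)[L^{\pm1},X]$. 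The variable $X=\DD C_1/C_1$ enters only linearly because it appears only through $R_{2p}$, which is affine-linear in $X$, and each monomial in the product of two $\overline{\mathds{S}}$-series contains at most one factor of the form $\overline{\mathds{S}}(H^2)$ — a product of two $H^2$-factors would contribute $H^4$, which via $H^3=\lambda_0^3$ reduces to $H$, carrying no $R_{2}$; this is the conceptual reason for the degree-$1$ claim.

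For the coefficient of $X$: the only source of $X$ is the term(s) where exactly one of the two $\overline{\mathds{S}}$-factors is the $H^2$-series and contributes its $-\frac{R_{1\,p-1}}{L}X$ piece; matching which pairing $H^a\otimes H^b$ produces $\overline{\mathds{S}}(H^2)$ on the $p_i$-side (resp.\ $p_j$-side) and reading off the $\lambda$-coefficient from the expansion of $\sum_m\phi_m\otimes\phi^m$ gives, after the $C_1,C_2,L$ bookkeeping, precisely $(-1)^{k+l}\frac{3R_{1\,k-1}R_{1\,l-1}}{L\lambda_i^{k-2}\lambda_j^{l-2}}$; the factor $3$ should emerge from the number of ways the $H^2\otimes 1$, $1\otimes H^2$, $H\otimes H$ combinations feed the relevant slot, together with the $-3\lambda_i$ factors in the $\phi_i$'s. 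The main obstacle I anticipate is precisely this last bookkeeping: carefully tracking the $\lambda$-weights and the $L,C_1,C_2$ normalizations through the WDVV substitution so that the coefficient of $X$ comes out with the stated clean form (in particular getting the constant $3$ and the powers $\lambda_i^{k-2}$, $\lambda_j^{l-2}$ exactly right), and confirming that no second power of $X$ can sneak in through cross-terms — this needs the $H^3=\lambda_0^3$ reduction to be applied consistently. Everything else is routine power-series manipulation using the already-established properties of $R_{ab}$ and the relations \eqref{drule}, \eqref{c1c2l}.
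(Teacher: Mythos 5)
Your overall route is the same as the paper's: start from the edge formula of Proposition \ref{VE}, insert the relation \eqref{wdvv} at $t=0$, substitute the asymptotic expansions \eqref{VS}, note $\lann 1,1\rann^{p_i,0+}_{0,2}=\mu\lambda_i$ so the exponentials cancel, and finish with Proposition \ref{RPoly} and Lemma \ref{RPoly2}. However, the step you single out as ``the conceptual reason for the degree-$1$ claim'' is wrong. The two $H^2$-insertions in a would-be product $\overline{\mathds{S}}_i(H^2)|_{z=x_1}\cdot\overline{\mathds{S}}_j(H^2)|_{z=x_2}$ sit in two \emph{different} correlators, attached to two different fixed points and variables; they are never multiplied inside a single copy of $H^*_{\T}(\PP^2)$, so there is no ``$H^4$'' to reduce via $H^3=\lambda_0^3$. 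Likewise your claim that $\sum_r \phi_r\otimes\phi^r$ expands only in terms $H^a\otimes H^b$ with $a+b\le 2$ is false: a direct computation with $\phi_r=\frac{-3\lambda_r\prod_{j\ne r}(H-\lambda_j)}{e_r}$, $\phi^r=e_r\phi_r$ gives the coefficient $-3$ for $H^2\otimes H$ and $H\otimes H^2$ (this is in fact where the factor $3$ in the stated $X$-coefficient originates). The correct reason no $X^2$ can appear is that the coefficient of $H^2\otimes H^2$ in $\sum_r\phi_r\otimes\phi^r$ is
$$\sum_{r=0}^2 \frac{9\lambda_r^2}{e_r}\;=\;-3\sum_{r=0}^2\frac{\lambda_r}{\prod_{j\ne r}(\lambda_r-\lambda_j)}\;=\;0$$
by the Lagrange interpolation identity, so the only possible source of a quadratic term in $X$ (each $\overline{\mathds{S}}(H^2)$ being affine-linear in $X$ through $R_{2p}=Q_{2p}-\tfrac{R_{1\,p-1}}{L}X$) simply never occurs in the sum over $r$. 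Without this vanishing your argument does not establish the degree bound.

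Beyond that, the second bullet of the Lemma — the exact coefficient $(-1)^{k+l}\frac{3R_{1\,k-1}R_{1\,l-1}}{L\lambda_i^{k-2}\lambda_j^{l-2}}$ — is precisely the part you defer as ``bookkeeping,'' so it is not actually proven in your proposal. To complete it you must extract, from the $H^2\otimes H$ and $H\otimes H^2$ terms (each with coefficient $-3$), the linear-in-$X$ part of \eqref{VS}, use $C_0C_1C_2=L^3$ and $C_0=C_1$ to clear the $C_1,C_2$ prefactors, and perform the coefficient extraction of $\frac{1}{x_1+x_2}$ exactly as in the paper's signed sum $x_1^{k}x_2^{l-1}-x_1^{k+1}x_2^{l-2}+\ldots$; only the top term of that signed sum carries the $X$-contribution with the stated powers of $\lambda_i$ and $\lambda_j$. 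So: same strategy as the paper, but the justification of the first bullet rests on an invalid reduction, and the second bullet is not carried out.
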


\begin{proof}
 By Proposition \ref{VE}, 
$$\text{Cont}^{\mathsf{A}}_\Gamma (e) = 
    (-1)^{k+l}\left[e^{-\frac{\mu \lambda_i}{x}-\frac{\mu \lambda_j}{y}}e_i\left(\overline{\mathds{V}}_{ij}-\frac{\delta_{ij}}{e_i(x+y)}\right)e_j
    \right]_{x^{k-1} y^{l-1}}\, .$$
 Using also the equation
 \begin{align*}
     e_i \overline{\mathds{V}}_{ij} (x, y) e_j  = 
\frac{\sum _{r=0}^2 \overline{\mathds{S}}_i (\phi_r)|_{z=x} \, \overline{\mathds{S}}_j (\phi ^r )|_{z=y}}{x+ y}\, ,
 \end{align*}
we write $\text{Cont}^{\mathsf{A}}_\Gamma (e)$
as
 \begin{align*}
\left[(-1)^{k+l} e^{-\frac{\mu \lambda_i}{x}-\frac{\mu \lambda_j}{y}}\sum_{r=0}^2\overline{\mathds{S}}_i(\phi_r)|_{z=x}\, \overline{\mathds{S}}_j(\phi^r)|_{z=y}
\right]_{x^{k}y^{l-1}-x^{k+1}y^{l-2}+
\ldots +(-1)^{k-1} x^{k+l-1}}
\end{align*}
where the subscript signifies a (signed) sum
of the respective coefficients.
If we substitute the asymptotic expansions \eqref{VS} for
$$\overline{\mathds{S}}_i(1)\, , \ \
\overline{\mathds{S}}_i(H)\, , \ \
\overline{\mathds{S}}_i(H^2)
$$ in the above expression, the Lemma follows from Proposition \ref{RPoly} and Lemma \ref{RPoly2}.
\end{proof}

\subsection{Legs}
Using the contribution formula of Proposition \ref{VEL},
\begin{eqnarray*}
     \text{Cont}^{\mathsf{A}}_\Gamma(l)
    &=&
    (-1)^{\mathsf{A}(l)-1}\left[e^{-\frac{\lann1,1\rann^{\mathsf{p}(l),0+}_{0,2}}{z}}
       \overline{\mathds{S}}_{\mathsf{p}(l)}(H^{k_l})\right]_{z^{\mathsf{A}(l)-1}}\, ,
 \end{eqnarray*}
 we easily conclude
 
\begin{enumerate}
 \item[$\bullet$]
 when the insertion at the marking $l$ is $H^0$,$$\text{Cont}^{\mathsf{A}}_\Gamma(l)\in
\CC(\lambda_0,\lambda_1,\lambda_2)[L^{\pm1}]\, ,$$
 \item[$\bullet$]
 when the insertion at the marking $l$ is $H^1$,
 $$C_1 \cdot \text{Cont}^{\mathsf{A}}_\Gamma(l)\in
\CC(\lambda_0,\lambda_1,\lambda_2)[L^{\pm1}]\, ,$$
\item[$\bullet$]
 when the insertion at the marking $l$ is $H^2$,
 $$C_1C_2 \cdot \text{Cont}^{\mathsf{A}}_\Gamma(l)\in
\CC(\lambda_0,\lambda_1,\lambda_2)[L^{\pm1},X]\, .$$
\end{enumerate}



\section{Holomorphic anomaly for $K\PP^2$}
\label{hafp}

\subsection{Proof of Theorem \ref{ooo}}

By definition, we have
\begin{equation}\label{ffww}
A_2(q)= \frac{1}{L^3}\left(3X
+1 -\frac{L^3}{2}\right)\, .
\end{equation}
Hence, statement (i),
$$\mathcal{F}_g^{\mathsf{SQ}} (q) \in \mathbb{C}[L^{\pm1}][A_2]\, ,$$
follows from Proposition \ref{VE}
and  Lemmas \ref{L1} - \ref{L2}.
Statement (ii),
$\mathcal{F}_g^{\mathsf{SQ}}$ has at most degree $3g-3$ with respect to $A_2$, holds since a stable graph of genus $g$ has at most $3g-3$ edges.
Since 
$$\frac{\partial}{\partial T} = \frac{q}{C_1}\frac{ \partial}{\partial q}\,, $$
statement (iii),
\begin{equation}\label{vvtt}
\frac{\partial^k \mathcal{F}_g^{\mathsf{SQ}}}{\partial T^k}(q) \in \mathbb{C}[L^{\pm1}][A_2][C_1^{-1}]\, ,
\end{equation}
follows since the ring
$$\mathbb{C}[L^{\pm1}][A_2]=\mathbb{C}[L^{\pm1}][X]$$
is closed under the action of the differential operator $$\DD=q\frac{\partial}{\partial q}\, $$
by \eqref{drule}.
The degree of $C_1^{-1}$ in \eqref{vvtt}
is $1$ which yields statement (iv).
\qed

\subsection{Proof of Theorem \ref{ttt}}
\label{prttt}

Let $\Gamma \in \mathsf{G}_{g}(\PP^2)$ be a decorated graph. Let us fix an edge $f\in\mathsf{E}(\Gamma)$:
\begin{enumerate}
\item[$\bullet$] if $\Gamma$ is connected after 
deleting $f$, denote the resulting graph by $$\Gamma^0_f\in \mathsf{G}_{g-1,2}(\PP^2)\, ,$$
\item[$\bullet \bullet$] if $\Gamma$ is disconnected after deleting $f$, denote the resulting two graphs by $$\Gamma^1_f\in \mathsf{G}_{g_1,1}(\PP^2) \ \ \ 
\text{and}\ \ \  \Gamma^2_f\in \mathsf{G}_{g_2,1}(\PP^2)$$
where $g=g_1+g_2$.
\end{enumerate}
There is no canonical
order for the 2 new markings. 
We will always sum over the 2 labellings. So more precisely, the graph
$\Gamma^0_f$ in case $\bullet$
should be viewed as sum
of 2 graphs
$$\Gamma^0_{f,(1,2)} +
\Gamma^0_{f,(2,1)}\, .$$
Similarly, in case $\bullet\bullet$,
we will sum over the ordering of $g_1$ and $g_2$. As usually, the summation
will be later compensated by a factor of
$\frac{1}{2}$ in the formulas.

By Proposition \ref{VE}, we have
the following formula for the contribution 
of the graph $\Gamma$ to the stable quotient
theory of $K\PP^2$,
 $$
 \text{Cont}_\Gamma
     =\frac{1}{|\text{Aut}(\Gamma)|}
     \sum_{\mathsf{A} \in \ZZ_{\ge 0}^{\mathsf{F}}} \prod_{v\in \mathsf{V}} 
     \text{Cont}^{\mathsf{A}}_\Gamma (v)
     \prod_{e\in \mathsf{E}} \text{Cont}^{\mathsf{A}}_\Gamma(e)\, .
 $$


Let $f$ connect the $\T$-fixed points $p_i, p_j \in \PP^2$. Let
the $\mathsf{A}$-values of the respective
half-edges be $(k,l)$. By Lemma \ref{L2}, we have
\begin{equation}\label{Coeff}
\frac{\partial \text{Cont}^{\mathsf{A}}_\Gamma(f)}{\partial X} = (-1)^{k+l}\frac{3 R_{1\,k-1} R_{1\,l-1}}{L\lambda_i^{k-2}\lambda_j^{l-2}}\, .
\end{equation}

\noindent $\bullet$ If $\Gamma$ is connected after deleting $f$, we have
\begin{multline*}
\frac{1}{|\text{Aut}(\Gamma)|}
     \sum_{\mathsf{A} \in \ZZ_{\ge 0}^{\mathsf{F}}} 
     \left(\frac{L^3}{3C^2_1}\right)
     \frac{\partial {\text{Cont}}^{\mathsf{A}}_\Gamma(f)}{\partial X} 
     \prod_{v\in \mathsf{V}} 
     \text{Cont}^{\mathsf{A}}_\Gamma (v)
     \prod_{e\in \mathsf{E},\, e\neq f} \text{Cont}^{\mathsf{A}}_\Gamma(e) \\=
\frac{1}{2} \,
\text{Cont}_{\Gamma^0_f}(H,H) \, .
\end{multline*}
The derivation is simply by using \eqref{Coeff} on the left
and Proposition \ref{VEL} on the right.

\vspace{5pt}
\noindent $\bullet\bullet$
If $\Gamma$ is disconnected after deleting $f$, we obtain
\begin{multline*}
\frac{1}{|\text{Aut}(\Gamma)|}
     \sum_{\mathsf{A} \in \ZZ_{\ge 0}^{\mathsf{F}}} 
     \left(\frac{L^3}{3C^2_1}\right)
     \frac{\partial {\text{Cont}}^{\mathsf{A}}_\Gamma(f)}{\partial X} 
     \prod_{v\in \mathsf{V}} 
     \text{Cont}^{\mathsf{A}}_\Gamma (v)
     \prod_{e\in \mathsf{E},\, e\neq f} \text{Cont}^{\mathsf{A}}_\Gamma(e)\\
=\frac{1}{2}\,
\text{Cont}_{\Gamma^1_f}(H) \,
\text{Cont}_{\Gamma^2_f}(H)\, 
\end{multline*}
by the same method. 

By combining the above two equations for all 
the edges of all the graphs $\Gamma\in \mathsf{G}_g(\PP^2,d)$
and using the vanishing
\begin{align*}
\frac{\partial {\text{Cont}}^{\mathsf{A}}_\Gamma(v)}{\partial X}=0
\end{align*}
of Lemma \ref{L1}, we obtain
\begin{multline}\label{greww}
\left(\frac{L^3} {3C^2_1}\right) \frac{\partial}{\partial X} 
 \lan  \ran^{\mathsf{SQ}}_{g,0}= \frac{1}{2}\sum_{i=1}^{g-1} \lan H\ran^{\mathsf{SQ}}_{g-i,1}
\lan H \ran^{\mathsf{SQ}}_{i,1} + \frac{1}{2} \lan H,H\ran^{\mathsf{SQ}}_{g-1,2}\, .
\end{multline}
We have followed here the notation of Section \ref{holp2}.
The equality \eqref{greww} holds in the ring $\mathbb{C}[L^{\pm}][A_2,C_1^{-1}]$.

Since $A_2=\frac{1}{L^3}(3X+1-\frac{L^3}{2})$ and 
$\lan \, \ran^{\mathsf{SQ}}_{g,0}=\mathcal{F}_g^{\mathsf{SQ}}$,
the left side of \eqref{greww} is, by the chain rule,
$$\frac{1}{C_1^2} \frac{\partial \mathcal{F}_g^{\mathsf{SQ}}}{\partial A_2} \in \mathbb{C}[L^{\pm}][A_2,C_1^{-1}]\, .$$ 
On the right side of \eqref{greww}, we have 
$$ \lan H  \ran^{\mathsf{SQ}}_{g-i,1}\, =\, \mathcal{F}_{g-i,1}^{\mathsf{SQ}}(q)\, =\,  \mathcal{F}^{\mathsf{GW}}_{g-i,1}(Q(q))\, ,$$
where the first equality is by definition and the second is by
wall-crossing \eqref{3456}. Then,
$$\mathcal{F}^{\mathsf{GW}}_{g-i,1}(Q(q))\ = \ \frac{\partial\mathcal{F}^{\mathsf{GW}}_{g-i}}{\partial T}(Q(q)) \ =\ 
\frac{\partial\mathcal{F}^{\mathsf{SQ}}_{g-i}}{\partial T}(q) 
$$
where the first equality is by the divisor equation in
Gromov-Witten theory and the second is again by wall-crossing
\eqref{3456}, so we conclude
$$ \lan H  \ran^{\mathsf{SQ}}_{g-i,1} =\frac{\partial\mathcal{F}^{\mathsf{SQ}}_{g-i}}{\partial T}(q)\, \in \mathbb{C}[[q]]\, .$$
Similarly, we obtain
\begin{eqnarray*}
 \lan H  \ran^{\mathsf{SQ}}_{i,1} &=&\frac{\partial\mathcal{F}^{\mathsf{SQ}}_{i}}{\partial T}(q)\, 
\, \in \mathbb{C}[[q]]\, ,
\\
 \lan H,H  \ran^{\mathsf{SQ}}_{g-1,2} &=&\frac{\partial^2\mathcal{F}^{\mathsf{SQ}}_{g-1}}{\partial T^2}(q)\,
\, \in \mathbb{C}[[q]]\, .
\end{eqnarray*}
Together, the above equations transform \eqref{greww} into 
exactly the holomorphic anomaly equation of Theorem \ref{ttt},
$$\frac{1}{C_1^2}\frac{\partial \mathcal{F}_g^{\mathsf{SQ}}}{\partial{A_2}}(q)
= \frac{1}{2}\sum_{i=1}^{g-1} 
\frac{\partial \mathcal{F}_{g-i}^{\mathsf{SQ}}}{\partial{T}}(q)
\frac{\partial \mathcal{F}_i^{\mathsf{SQ}}}{\partial{T}}(q)
+
\frac{1}{2}
\frac{\partial^2 \mathcal{F}_{g-1}^{\mathsf{SQ}}}{\partial{T}^2}(q)\,
$$
as an equality in $\mathbb{C}[[q]]$.

The series $L$ and $A_2$ are expected to be algebraically independent. Since
we do not have a proof
of the independence, to lift holomorphic anomaly equation to the equality
$$\frac{1}{C_1^2}\frac{\partial \mathcal{F}_g^{\mathsf{SQ}}}{\partial{A_2}}
= \frac{1}{2}\sum_{i=1}^{g-1} 
\frac{\partial \mathcal{F}_{g-i}^{\mathsf{SQ}}}{\partial{T}}
\frac{\partial \mathcal{F}_i^{\mathsf{SQ}}}{\partial{T}}
+
\frac{1}{2}
\frac{\partial^2 \mathcal{F}_{g-1}^{\mathsf{SQ}}}{\partial{T}^2}\,
$$
in the ring $\mathbb{C}[L^{\pm1}][A_2,C_1^{-1}]$, we must
prove
the equalities 
\begin{equation}\label{pp33p}
 \lan H  \ran^{\mathsf{SQ}}_{g-i,1} =\frac{\partial\mathcal{F}^{\mathsf{SQ}}_{g-i}}{\partial T}\,,  \ \ \ \ 
 \lan H  \ran^{\mathsf{SQ}}_{i,1} = \frac{\partial\mathcal{F}^{\mathsf{SQ}}_{i}}{\partial T}\, ,
\end{equation}
$$ \lan H,H  \ran^{\mathsf{SQ}}_{g-1,2}= \frac{\partial^2\mathcal{F}^{\mathsf{SQ}}_{g-1}}{\partial T^2}\,
$$
in the ring $\mathbb{C}[L^{\pm}][A_2,C_1^{-1}]$.
The lifting will be proven in Section \ref{llfftt}
below.

We do not study the genus 1 unpointed series $\mathcal{F}^{\mathsf{SQ}}_1(q)$ in the paper, so we take
\begin{eqnarray*}
 \lan H  \ran^{\mathsf{SQ}}_{1,1} &=&\frac{\partial\mathcal{F}^{\mathsf{SQ}}_{1}}{\partial T}\, ,\\
 \lan H,H  \ran^{\mathsf{SQ}}_{1,2} &=&\frac{\partial^2\mathcal{F}^{\mathsf{SQ}}_{1}}{\partial T^2}\, .
\end{eqnarray*}
as definitions of the right side in the genus 1 case.
There is no difficulty in calculating these series explicitly
using Proposition \ref{VEL},

\begin{eqnarray*}
\frac{\partial \mathcal{F}_1^{\mathsf{SQ}}}{\partial{T}} &= &-\frac{1}{6 C_1}L^3 A_2\, ,\\
\frac{\partial^2 \mathcal{F}_1^{\mathsf{SQ}}}{\partial{T}^2}& = &
\frac{1}{C_1} \DD\left(-\frac{1}{6 C_1}L^3 A_2\right)\,.
\end{eqnarray*}

\subsection{Lifting} \label{llfftt}
We write the first two equalities
in \eqref{pp33p} 
together as
\begin{equation}\label{fss19}
\lan H  \ran^{\mathsf{SQ}}_{h,1} =\frac{\partial\mathcal{F}^{\mathsf{SQ}}_{h}}{\partial T}\, .
\end{equation}
The formula
of Proposition \ref{VEL} for
the left side of \eqref{fss19}
is a
summation over graphs $\Gamma\in \mathsf{G}_{h,1}(\proj^2)$.
Stabilization yields canonical map,
$$\mathsf{G}_{h,1}(\proj^2) \rightarrow \mathsf{G}_{h,0}(\proj^2)\, , \ \ \ \ \Gamma \mapsto
\widetilde{\Gamma}\, ,$$
obtained by forgetting the marking $1$.
\begin{enumerate}
\item[$\bullet$] If the marking $1$ is carried by a vertex $v$
which is stable without the marking, $\widetilde{\Gamma}$
is simply obtained by removing the marking.
The marking falls to the corresponding
vertex $v$ of $\widetilde{\Gamma}$.
\item[$\bullet$] 
If the marking $1$ is carried by a vertex $v$
which is unstable without the marking, then
$v$ is contracted in the stabilization.
The marking $1$
falls to a 
 unique edge of $\widetilde{\Gamma}$.
\end{enumerate} 
For a fixed edge $\widetilde{f}$ of $\widetilde{\Gamma}$ there are exactly 3
graphs of $\mathsf{G}_{h,1}(\proj)$ for 
which the marking $1$ falls to 
$\widetilde{f}$. These come from the
3 possible $\mathsf{p}$ values of the
contracted vertex.

If we start with an edge $\widetilde{f}$ of
$\widetilde{\Gamma}\in\mathsf{G}_{h,0}(\proj^2)$
connecting{\footnote{The analysis of the self edge case is identical
and left to the reader.}} two vertices $v$ and $v'$, there are 5 associated graphs in $\mathsf{G}_{h,1}(\proj^2)$:
\begin{enumerate}
    \item [$\bullet$]
let $\Gamma_{v}^{\widetilde{f}}, \Gamma_{v'}^{\widetilde{f}}\in \mathsf{G}_{h,1}(\proj^2)$
be
the graphs where the marking $1$ falls to the 
respective vertices of $\widetilde{\Gamma}$,
\item[$\bullet$] let $\Gamma_{1}^{\widetilde{f}},
\Gamma_{2}^{\widetilde{f}},
\Gamma_{3}^{\widetilde{f}}\in \mathsf{G}_{h,1}(\proj^2)$ be 
the three graphs where the marking $1$
falls to the edge $\tilde{f}$ of $\widetilde{\Gamma}$.
\end{enumerate}


The right side of \eqref{fss19}
may also be written as a summation
over graphs $\Gamma\in \mathsf{G}_{h,1}(\proj^2)$.
The formula of Proposition \ref{VEL} for
$\mathcal{F}^{\mathsf{SQ}}_{h}$ is a
summation over graphs  
$$\widetilde{\Gamma}\in \mathsf{G}_{h,0}(\proj^2)\, .$$
We then view the action of the
derivative $\frac{\partial}{\partial T}$ 
as producing the marking $1$. 
If the derivative acts on a vertex contribution of $\widetilde{\Gamma}$, the marking is distributed to
that
vertex. If the derivative acts on an edge $\widetilde{f}$
contribution, we view the differentiation 
as accounting for the sum of the 3 graphs 
\begin{equation} \label{tripp}
\Gamma_{1}^{\widetilde{f}},
\Gamma_{2}^{\widetilde{f}},
\Gamma_{3}^{\widetilde{f}}\in
\mathsf{G}_{h,1}(\proj^2)\, .
\end{equation}

 

We now apply Proposition \ref{VEL} via the  above
analysis to the difference
\begin{equation}\label{ff345f}
    \frac{\partial \mathcal{F}^{\text{SQ}}_h}{\partial T}-\lan H\ran^{\text{SQ}}_{h,1}\,.
\end{equation}
The result is a summation of
 contributions corresponding to graphs $\Gamma\in\mathsf{G}_{h,1}(\PP^2)$
 where all triples \eqref{tripp} are considered
 contributing together.



%

For graphs $\Gamma\in \mathsf{G}_{h,1}(\proj^2)$
for which the marking $1$ falls to a vertex $v$ in
$\widetilde{\Gamma}$, we will distribute the
vertex contribution 
naturally to
the incident edges of $v \in \widetilde{\Gamma}$
by the following method.
Proposition \ref{q2q2} and the string equation
yield a local version of divisor equation in the ring $\CC[L^{\pm 1}][A_2,C_1^{-1}]$. For $\gamma\in H^*(\overline{M}_{g,n})$,
\begin{multline}\label{DE}
    \frac{\partial}{\partial T}\ppl
\psi_1^{a_1}, \ldots,
\psi_n^{a_n}
\, \Big|\, \gamma\,
  \ppr_{h,n}^{\mathsf{p}(v),0+} =\\
    \sum_{k\geq0} \ppl
\psi_1^{a_1}, \ldots,
\psi_n^{a_n},\psi_{n+1}^k
\, \Big|\, \gamma\,
  \ppr_{h,n+1}^{\mathsf{p}(v),0+}\frac{(-1)^{k}\lambda_{\mathsf{p}(v)}^{1-k}LR_{1k}}{C_1}\,\\
  -\sum_{j=0}^{n} \ppl
\psi_1^{a_1}, \ldots,\psi_1^{a_j-1},\ldots,
\psi_n^{a_n}
\, \Big|\, \gamma\,
  \ppr_{h,n}^{\mathsf{p}(v),0+}\frac{\lambda_{\mathsf{p}(v)} L}{C_1}\,
  ,\end{multline} 
While Proposition \ref{q2q2} was stated only for genus 0 invariants without $\psi$ insertions, the
same result holds for all genera $h$ with $\psi$ by exactly the same argument,

    \begin{multline}\label{VWP}
\ppl
\psi_1^{a_1}, \ldots,
\psi_n^{a_n}
\, \Big|\, \gamma\,
  \ppr_{h,n}^{\mathsf{p}(v),0+} = \\
{\small{ \left(\sum_{k\geq 0}\frac{1}{k!}\int_{\overline{M}_{h,n+k}}\psi_1^{a_1}\cdots\psi_n^{a_n} \cdot\gamma\cdot T(\psi_{n+1})\cdots T(\psi_{n+k})\right)\Big|_{t_0=0,t_1=0,t_{j\ge 2}=(-1)^j\frac{R_{j-1}}{\lambda_i^{j-1}}}\, .}}
 \end{multline}

By Proposition \ref{VEL} with \eqref{DE}, the $\mathsf{A}$-valued contribution{\footnote{For $\sum_{i=1}^n a_i = 3g-3+n+1-r$,  where $r$ is degree of $\gamma\in H^*(\overline{M}_{g,n})$, we use the following equation which can be easily checked by \eqref{VWP} and the string equation,
\begin{equation*} \ppl
\psi_1^{a_1},\ldots,
\psi_n^{a_n},1
\, \Big|\, \gamma\,
  \ppr_{\mathsf{g}(v),n+1}^{\mathsf{p}(v),0+} =
  \sum_{j=1}^n\ppl
\psi_1^{a_1}, \ldots,\psi_j^{a_j-1},\ldots,
\psi_n^{a_n}
\, \Big|\, \gamma\,
  \ppr_{\mathsf{g}(v),n}^{\mathsf{p}(v),0+} \,.\end{equation*}}} 
to
\begin{align*}
    \frac{\partial \mathcal{F}^{\text{SQ}}_h}{\partial T}-\lan H\ran^{\text{SQ}}_{h,1}\,,
\end{align*}
of the vertex $v$ is
$$-\sum_{j=1}^n \ppl
\psi_1^{a_1}, \ldots,\psi_j^{a_j-1},\ldots,
\psi_n^{a_n}
\, \Big|\, \mathsf{H}_{\mathsf{g}(v)}^{\mathsf{p}(v)}\,
  \ppr_{\mathsf{g}(v),n}^{\mathsf{p}(v),0+} \frac{\lambda_{\mathsf{p}(v)}L}{C_1}\, \,.$$
Here, we have used also the following equation for the $H$ insertion
 on the leg $l$ at the vertex $v$:
\begin{align*}
     \text{Cont}^{\mathsf{A}}_\Gamma(l)&
    =
    (-1)^{\mathsf{A}(l)-1}\left[e^{-\frac{\lann1,1\rann^{\mathsf{p}(l),0+}_{0,2}}{z}}
       \overline{\mathds{S}}_{\mathsf{p}(l)}(H)\right]_{z^{\mathsf{A}(l)-1}}\, \\
    &=(-1)^{\mathsf{A}(l)-1}\frac{\lambda^{-\mathsf{A}(l)}_{\mathsf{p}(v)}L R_{1\,\mathsf{A}(l)-1}}{C_1}\, .
 \end{align*}  
The location of $\psi_j$ with exponent $a_j-1$ exactly tell us to which edge we distribute.

After applying Proposition \ref{VEL} to \begin{align*}
    \frac{\partial \mathcal{F}^{\text{SQ}}_h}{\partial T}-\lan H\ran^{\text{SQ}}_{h,1}\,,
\end{align*}
with above vertex term distribution, the
sum of the contributions of the 5 graphs
$$\Gamma^{\widetilde{f}}_{v}\, ,\ \Gamma^{\widetilde{f}}_{v'}\,,\ \Gamma^{\widetilde{f}}_1\,,\
\Gamma^{\widetilde{f}}_2\, ,\ \Gamma^{\widetilde{f}}_3$$
related to the edge $\widetilde{f}$ of $\widetilde{\Gamma}$ can be written as:
\begin{align}\label{EE}
    &-\frac{1}{L}\DD \left[e^{-\frac{\mu \lambda_i}{x}-\frac{\mu \lambda_j}{y}}e_i\overline{\mathds{V}}_{ij}(x,y)e_j\right]_{x^k y^l} +\\ \nonumber&\sum_{\alpha=0}^{2}\left[e^{-\frac{\mu \lambda_i}{x}-\frac{\mu \lambda_\alpha}{z}}e_i\overline{\mathds{V}}_{i\alpha}(x,z)e_\alpha\right]_{x^{k}}
    \left[e^{-\frac{\mu \lambda_\alpha}{z}-\frac{\mu \lambda_j}{y}}e_\alpha\overline{\mathds{V}}_{\alpha j}(z,y)e_j\right]_{y^{l}}\\
    \nonumber&- \left[e^{-\frac{\mu \lambda_i}{x}-\frac{\mu \lambda_j}{y}}e_i\overline{\mathds{V}}_{ij}(x,y)e_j\right]_{x^{k+1}y^{l}}-\left[e^{-\frac{\mu \lambda_i}{x}-\frac{\mu \lambda_j}{y}}e_i\overline{\mathds{V}}_{ij}(x,y)e_j\right]_{x^k y^{l+1}} .
\end{align}
The first two terms come from sum of the triple
$\Gamma^{\widetilde{f}}_1$,
$\Gamma^{\widetilde{f}}_2$, 
$\Gamma^{\widetilde{f}}_3$,
and last two terms come from $\Gamma^{\widetilde{f}}_{v}$, $\Gamma^{\widetilde{f}}_{v'}$.
The vanishing of above sum in the ring $\CC[L^{\pm 1}][A_2,C_1^{-1}]$ is easily obtained
using Lemma \ref{R} (including relation \eqref{drule}). 

Since equation \eqref{DE} and the vanishing of \eqref{EE} holds in the ring $\CC[L^{\pm 1}][A_2,C_1^{-1}]$, we have proven the identity
$$  \frac{\partial \mathcal{F}^{\text{SQ}}_h}{\partial T}-\lan H\ran^{\text{SQ}}_{h,1}=0$$
in the ring $\CC[L^{\pm 1}][A_2,C_1^{-1}]$.
The proof of
$$ \frac{\partial^2\mathcal{F}^{\mathsf{SQ}}_{h}}{\partial T^2}-\lan H,H  \ran^{\mathsf{SQ}}_{h,2}=0\,$$
in the ring $\CC[L^{\pm 1}][A_2,C_1^{-1}]$
is identical.
The proof of the lifting completes the proof
of Theorem \ref{ttt}. \qed

\subsection{Explicit calculations in genus 2}
We present here the full calculation of $\mathcal{F}^{SQ}_2$ for 
$K\PP^2$. 
The $7$ stable graphs of genus 2 are:

\includegraphics[scale=0.47]{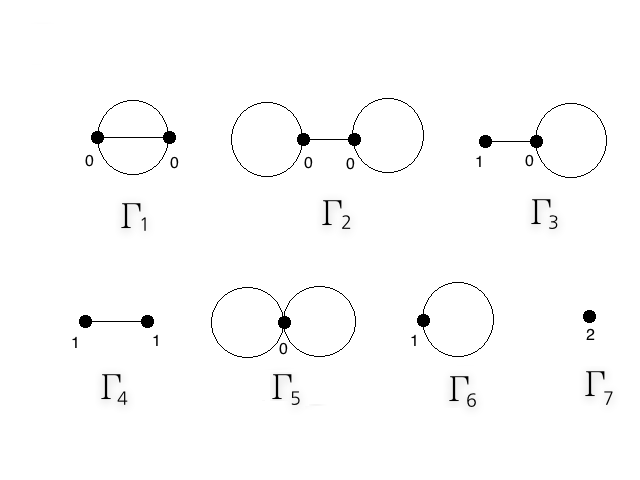}

\noindent The full contribution
of each stable graph $\Gamma_i$  is obtained by summing the contributions  of all
possible decorations:

\begin{small}
\begin{align*}
    \text{Cont}_{\Gamma_1} =&  \frac{24-12L+6L^2-61L^3+12L^4-3L^5+54L^6-3L^7-17L^9}{2592L^3}\\
    &+
    \frac{12-4L+L^2-20L^3+2L^4+9L^6}{144L^3}X\\
    &+\frac{6-L-5L^3}{24L^3}X^2\\
    &+\frac{1}{4L^3}X^3\, ,\\
    \text{Cont}_{\Gamma_2} =& \frac{24-28L+10L^2-45L^3+36L^4-7L^5+26L^6-11L^7-5L^9}{1728L^3}\\
    &+
    \frac{36-28L+5L^2-44L^3+18L^4+13L^6}{288L^3}X\\
    &+\frac{18-7L-11L^3}{48L^3}X^2\\
    &+\frac{3}{8L^3}X^3\, ,\\ \\
    \end{align*}
\begin{align*}
    \text{Cont}_{\Gamma_3} = &\frac{288-190L-25L^2-364L^3+145L^4+74L^5+97L^6-25L^8}{20736L^2}\\
    &+\frac{288-95L-24L^2-194L^3+25L^5}{3456L^2}X\\
    &+\frac{1}{8L^2}X^2\, , \\ \\
    \text{Cont}_{\Gamma_4} =& \frac{2592-541L-864L^2-2229L^3+720L^4+897L^5-575L^7}{746496L}\\
    &+\frac{1}{96L}X\, , \\ \\
    \text{Cont}_{\Gamma_5} = &\frac{12-8L-11L^2-8L^3+5L^4+16L^5-L^6-5L^8}{1728L^2}\\
    &+\frac{3-L-3L^2-L^3+2L^5}{72L^2}X\\
    &+\frac{1-L^2}{16L^2}X^2\,, \\ \\
    \text{Cont}_{\Gamma_6} = &\frac{138+143L-204L^2-135L^3-222L^4+201L^5+79L^7}{62208L}\\
    &+\frac{23+24L-22L^2-25L^4}{3456L}X\, , \\ \\
    \text{Cont}_{\Gamma_7} = &\frac{281+4320L+1785L^2-2736L^3-3765L^4+2059L^6}{3732480} \, .
\end{align*}
\end{small}

\noindent After summing the above contributions, we obtain the following
result
which exactly matches \cite[(4.35)]{ASYZ}.

\begin{Prop} \label{gen222} The stable quotient series for $K\PP^2$ in genus 2 is 
 \begin{align*}
     \mathcal{F}^{\mathsf{SQ}}_{2}=&\frac{400-959L^3+784L^6-216L^9}{17280L^3}+\left(-\frac{1}{3}+\frac{5}{24L^3}+\frac{13L^3}{96}\right)X\\
     &+\left(-\frac{1}{2}+\frac{5}{8L^3}\right)X^2+\frac{5}{8L^3}X^3.
 \end{align*}
\end{Prop}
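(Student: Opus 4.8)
The plan is to obtain $\mathcal{F}^{\mathsf{SQ}}_2 = \lann\ \rann^{\mathsf{SQ}}_{2,0}$ by running the torus localization formula of Proposition~\ref{VE} over the seven stable graphs of genus $2$ and summing the resulting contributions. First I would organize $\mathsf{G}_{2,0}(\PP^2)$ according to its seven underlying stable graphs $\Gamma_1,\dots,\Gamma_7$ — the theta graph and the dumbbell (three edges each), the two graphs with two edges, the two graphs with one edge, and the one-vertex graph with no edge — where for each underlying graph one still sums over all assignments $\mathsf{p}: \mathsf{V}\to (\PP^2)^{\T}$ of the three $\T$-fixed points. Write $\mathrm{Cont}_{\Gamma_i}$ for this decorated sum, so that $\mathcal{F}^{\mathsf{SQ}}_2 = \sum_{i=1}^{7}\mathrm{Cont}_{\Gamma_i}$; since the total is homogeneous of degree $0$ in $\lambda_0,\lambda_1,\lambda_2$, after the specialization $\lambda_i = \zeta^i\lambda_0$ it — and, as will emerge, each $\mathrm{Cont}_{\Gamma_i}$ individually — becomes an element of $\CC[L^{\pm 1}][X]$.

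For each $i$, Proposition~\ref{VE} expresses $\mathrm{Cont}_{\Gamma_i}$ as $\frac{1}{|\mathrm{Aut}(\Gamma_i)|}\sum_{\mathsf{A}}\prod_{v}\mathrm{Cont}^{\mathsf{A}}_{\Gamma_i}(v)\prod_{e}\mathrm{Cont}^{\mathsf{A}}_{\Gamma_i}(e)$, and I would evaluate the two types of factors separately. For a vertex $v$ of genus $h\in\{0,1,2\}$ with incident flag $\mathsf{A}$-values $(a_1,\dots,a_n)$, Proposition~\ref{q2q2} — which holds verbatim in genus $1$ and $2$ and with $\psi$-insertions — rewrites $\mathrm{Cont}^{\mathsf{A}}_{\Gamma_i}(v) = \ppl \psi_1^{a_1-1},\dots,\psi_n^{a_n-1}\,\Big|\,\mathsf{H}_h^{\mathsf{p}(v)}\ppr^{\mathsf{p}(v),0+}_{h,n}$ as a fixed universal polynomial in the genus-$0$ series $\lann 1,\dots,1\rann^{\mathsf{p}(v),0+}_{0,m}$, which involves only finitely many $m$ by the dimension constraint on $\overline{M}_{h,n+k}$ and thus reduces to a finite list of Hodge integrals against the class $\mathsf{H}_h^{p_i}$ of \eqref{hhbb}; by Corollary~\ref{Poly} and Proposition~\ref{RPoly} each vertex factor lies in $\CC(\lambda_0,\lambda_1,\lambda_2)[L^{\pm 1}]$. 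For an edge $e$ joining $p_i$ and $p_j$ with half-edge $\mathsf{A}$-values $(k,l)$, I would expand $\overline{\mathds{V}}_{ij}$ through the relation \eqref{wdvv} and substitute the asymptotic expansions \eqref{VS}, so that $\mathrm{Cont}^{\mathsf{A}}_{\Gamma_i}(e)$ becomes an explicit polynomial of degree $\le 1$ in $X$ over $\CC(\lambda_0,\lambda_1,\lambda_2)[L^{\pm 1}]$, with $X$-coefficient given by \eqref{Coeff} and constant term assembled from the series $R_{1k}$ and $R_{2k} = Q_{2k}-\frac{R_{1\,k-1}}{L}X$ of Lemma~\ref{RPoly2}. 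The sums over $\mathsf{A}\in\ZZ_{>0}^{\mathsf{F}}$ are finite because a nonvanishing vertex factor forces the $\psi$-exponents at each vertex to obey the dimension constraint of its moduli space.

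Carrying this out for $\Gamma_1,\dots,\Gamma_7$ with the explicit values of $R_k$, $R_{1k}$, $R_{2k}$ recorded in Sections~\ref{furcalc}--\ref{svel}, and then imposing $\lambda_i = \zeta^i\lambda_0$ (under which all $\lambda$'s in each decorated contribution cancel), should produce precisely the seven expressions $\mathrm{Cont}_{\Gamma_1},\dots,\mathrm{Cont}_{\Gamma_7}\in\CC[L^{\pm 1}][X]$ displayed above. Adding these seven polynomials and collecting powers of $X$ — rewriting rational functions of $q$ in terms of $L$ via $q = \frac{1}{27}(L^{-3}-1)$ — then gives the stated formula for $\mathcal{F}^{\mathsf{SQ}}_2$, and substituting for $X$ by means of \eqref{ffww} re-expresses it in $A_2$ and identifies it with \cite[(4.35)]{ASYZ}.

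The work is essentially careful bookkeeping rather than new ideas. The heaviest single piece is the vertex factor of the genus-$2$ graph $\Gamma_7$ — and, to a lesser degree, the genus-$1$ vertices appearing in $\Gamma_3$, $\Gamma_5$, $\Gamma_6$ — which requires Hodge integrals on $\overline{M}_{2,m}$ and $\overline{M}_{1,m}$ paired with the Chern classes of $\mathbb{E}^{*}_h$ entering $\mathsf{H}_h^{p_i}$, organized through the reduction of $\overline{M}_g$ correlators to genus-$0$ ones exemplified by the formula for $\lann\ \rann_{2,0}$ in Section~\ref{intmg}. Combining these with the truncated edge sums and verifying that all $\lambda_i$'s disappear after summing over the $\mathsf{p}$-decorations is the only delicate point; given Proposition~\ref{VE} and the genus-$0$ input it is a finite computation.
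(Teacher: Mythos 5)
Your proposal is correct and follows essentially the same route as the paper: the paper's own computation runs the localization formula of Proposition \ref{VE} over the seven genus-$2$ stable graphs, sums over all $\mathsf{p}$-decorations to get each $\mathrm{Cont}_{\Gamma_i}$ as an explicit element of $\CC[L^{\pm 1}][X]$ (using the vertex/edge evaluations through the $R$-series exactly as you describe), and then adds the seven contributions to obtain the stated formula. The only difference is one of presentation — the paper records the seven intermediate polynomials explicitly rather than describing the bookkeeping — so there is nothing to add.
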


 The B-model potential function for genus 2 is also calculated in \cite[Section 3.5]{KZ}
 according to graph contributions obtained from B-model physics. Though the calculation exactly matches our result in total, the individual graph contributions do not match. 
  The relationship between the different graph contributions would be good to
  understand.

\subsection{Bounding the degree}

 The degrees in $L$ of the terms of
 $$\mathcal{F}^{\mathsf{SQ}}_{g}\in \mathbb{C}[L^{\pm1}][A_2]$$ for $K\proj^2$ 
 always fall in the range
 \begin{equation}\label{ds33}
 [9-9g, 6g-6]\, .
 \end{equation}
In particular, the constant (in $A_2$) term of $\mathcal{F}^{\mathsf{SQ}}_{g}$
missed by the
holomorphic anomaly equation for $K\proj^2$ is a
Laurent polynomial in $L$ with degrees in the range
\eqref{ds33}.
The bound \eqref{ds33} is a consequence of Proposition \ref{VE}, 
 the vertex and edge analysis of Section \ref{svel}, and the
 following result.
 
\begin{Lemma}
 The degrees in $L$ of $R_{ip}$ fall in the range
 \begin{align*}
     [-i,2p]\, .
 \end{align*}
\end{Lemma}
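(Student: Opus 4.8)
The plan is to handle the three families $R_{0p}=R_p$, $R_{1p}$, $R_{2p}$ separately: the case $i=0$ carries the substantive content, and the cases $i=1,2$ then follow formally by propagating $L$-degrees through the recursions of Lemma \ref{R}. A remark on bookkeeping first: by Lemma \ref{RPoly2}, $R_{0p}$ and $R_{1p}$ lie in $\CC[L^{\pm1}]$ while $R_{2p}=Q_{2p}-\tfrac{R_{1\,p-1}}{L}X$ has $X$-degree at most $1$; throughout, ``degree in $L$'' will mean the union of the Laurent-degree ranges of the coefficients of such an element written as a polynomial in $X$. I will use repeatedly that, since $L=(1+27q)^{-1/3}$ (equivalently $q=\tfrac1{27}(L^{-3}-1)$), one has $\DD L=\tfrac13(L^4-L)$; hence $\DD$ preserves $\CC[L]$, raises top $L$-degrees by at most $3$, annihilates constants, and $1-L^3$, $\tfrac{\DD L}{L}=\tfrac13(L^3-1)$, $\tfrac{\DD L}{L^2}=\tfrac13(L^2-L^{-1})$ are explicit Laurent polynomials of controlled degree.

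For $i=0$, where $R_{0p}=R_p$, I would appeal to the Zagier--Zinger analysis. Substituting the asymptotic form \eqref{assym} into the Picard--Fuchs equation \eqref{PF} and using $1+\DD\mu=L$ yields, for each $k$, a relation in which the $R_k$- and $R_{k-1}$-terms cancel and which therefore determines $\DD R_{k-1}$ as a $\CC[L^{\pm1}]$-combination of $R_{k-2},R_{k-3}$ and their $\DD$-derivatives; this is the recursion of \cite{ZaZi}, up to the index shift $\mathcal{F}_{-1}(q)=\overline{\mathds{I}}(-q)$ recorded in Section \ref{furcalc}. That $R_p\in\CC[L^{\pm1}]$ is Proposition \ref{RPoly}; the sharper $\deg_L R_p\subseteq[0,2p]$ comes from the same induction --- the upper bound $2p$ is a direct degree count through the recursion (each step raises top $L$-degrees by at most $3$ and then divides by $L^2$), while the vanishing of negative $L$-powers is exactly the part of the Zagier--Zinger argument that produces the required cancellations. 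For $i=1$, the first relation of Lemma \ref{R} gives $R_{1k}=R_k+\tfrac{\DD R_{k-1}}{L}$ for all $k\geq 0$ (with $R_{-1}=0$, the case $k=0$ being trivial); since $\deg_L R_k\subseteq[0,2k]$ and $R_{k-1}\in\CC[L]$ has top degree at most $2(k-1)$, the term $\tfrac{\DD R_{k-1}}{L}\in\CC[L]$ has $L$-degrees in $[0,2k]$, so $\deg_L R_{1k}\subseteq[0,2k]\subseteq[-1,2k]$.

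For $i=2$ I would induct on $p$ via the second relation of Lemma \ref{R},
$$R_{2\,p+1}=R_{1\,p+1}+\frac{\DD R_{1p}}{L}+\Bigl(\frac{\DD L}{L^2}-\frac{X}{L}\Bigr)R_{1p},$$
with base case $R_{20}=1$ (degree $0\in[-2,0]$). Since $R_{1p}\in\CC[L^{\pm1}]$ is $X$-free, $X$ enters only through $-\tfrac{X}{L}R_{1p}$, so $R_{2\,p+1}$ indeed has $X$-degree at most $1$. Using the $i=1$ bound: $\deg_L R_{1\,p+1}\subseteq[0,2p+2]$; $\DD R_{1p}\in\CC[L]$ has $L$-degrees in $[1,2p+3]$, so $\tfrac{\DD R_{1p}}{L}$ lies in $[0,2p+2]$; and $\tfrac{\DD L}{L^2}R_{1p}$ has $L$-degrees in $[-1,2p+2]$, while the $X$-coefficient $-\tfrac1LR_{1p}$ has $L$-degrees in $[-1,2p-1]$. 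The union is contained in $[-2,2p+2]$, which is the claim at index $p+1$, completing the induction. The only non-formal ingredient --- and the expected main obstacle --- is the $i=0$ case: establishing that $R_p$ has no negative powers of $L$ rests on the Zagier--Zinger study of the Picard--Fuchs recursion (reduced to the cubic in $\PP^2$ by the index shift of Section \ref{furcalc}), whereas the $i=1$ and $i=2$ cases are pure degree-chasing through Lemma \ref{R}.
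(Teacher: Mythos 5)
Your proposal follows the paper's proof exactly: the $i=0$ case is deferred to the arguments of \cite{ZaZi} (via the Picard--Fuchs recursion), and the $i=1,2$ cases are obtained by degree-chasing through the relations of Lemma \ref{R} using $\DD L=\tfrac13(L^4-L)$, which you carry out correctly (indeed with slightly sharper ranges $[0,2p]$ and $[-1,2p]$). The only blemish is your parenthetical gloss that each Zagier--Zinger step ``raises top $L$-degrees by at most $3$ and then divides by $L^2$'' --- taken literally this would give growth $+1$ per index, contradicting $\deg_L R_1=2$, whereas the recursion actually determines $\DD R_{k-1}$ from $R_{k-2},R_{k-3}$ and yields growth $+2$ --- but since you, like the paper, defer this part to \cite{ZaZi}, it does not affect the argument.
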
 

\begin{proof}
 The proof for the functions $R_{0p}$ follows from the arguments of \cite{ZaZi}. The proof for $R_{1p}$ and $R_{2p}$ follows  from Lemma \ref{R}.
\end{proof}

For $\mathcal{F}^{\mathsf{SQ}}_{2}$,  the $L$ degrees can be seen to vary between
 $0$ and $6$ in the formula of Propositon \ref{gen222} when
 rewritten in terms of $A_2$ using \eqref{ffww}.
 The sharper range
\begin{equation*}
 [0, 6g-6]\, 
 \end{equation*}
 proposed in \cite{ASYZ} 
for the $L$ degrees of $\mathcal{F}^{\mathsf{SQ}}_{g}$
is found in examples. How to derive the sharper bound
from properties of the functions $R_{ip}$ is
an interesting question.

\section{Holomorphic anomaly for $K\mathbb{P}^2$ with insertions}
\label{hakp2}
\subsection{Insertions}
Since the stable quotient theory
of $K\mathbb{P}^2$ has virtual dimension 0, insertions
do not play a significant role in the nonequivariant
theory. However, for the
{\em equivariant} stable quotient theory of
$K\mathbb{P}^2$, insertions of all dimensions can be studied.
After the specialization of torus weights
\begin{equation}\label{kk123}
\lambda_0=1\, , \ \ \lambda_1=\zeta\, , \ \ \lambda_2=\zeta^2\, , 
\end{equation}
in the equivariant theory (with $\zeta^3=1$) , 
we obtain a {\em numerical} theory of $K\mathbb{P}^2$ with
arbitrary insertions. Define the series
\begin{eqnarray*}
\mathcal{F}_{g,n}^{\mathsf{SQ}}[a,b,c] & = &
\lan  \,\tau_0(1)^a \tau_0(H)^b\tau_0(H^2)^c\, \ran^{\mathsf{SQ}}_{g,n}\\ & = & \nonumber
\sum_{d=0}^\infty  q^d
\int_{[\overline{Q}_{g,n}(K\PP^2,d)]^{vir}} \prod^{a+b}_{i=a+1}\text{ev}_{i}^*(H) \prod_{i=a+b+1}^{a+b+c}\text{ev}_{i}^*(H^2)\, , 
\end{eqnarray*}
with $n=a+b+c$.

Our proof of Theorem \ref{ooo} immediately yields the parallel
results for the stable quotient series with insertions:

\begin{enumerate}
\item[(i)]
$\mathcal{F}^{\mathsf{SQ}}_{g,n}[a,b,c]\in \mathbb{C}[L^{\pm1}][A_2,C_1,C_1^{-1}]$
for  $2g-2+n > 0$, \vspace{5pt}
\item[(ii)]
$\mathcal{F}^{\mathsf{SQ}}_{g,n}[a,b,c]$
 is of degree $\leq 3g-3+c$  in $A_2$,
 \vspace{5pt}
\item[(iii)]
$\frac{\partial^k \mathcal{F}_{g,n}^{\mathsf{SQ}}[a,b,c]}{\partial T^k} \in \mathbb{C}[L^{\pm1}][A_2,C_1,C_1^{-1}]$ for  $2g-2+n\geq 0$ and  $k\geq 1$,
\vspace{5pt}

\item[(iv)]
${\frac{\partial^k \mathcal{F}_{g,n}^{\mathsf{SQ}}[a,b,c]}{\partial T^k}}$ is homogeneous of degree $k+b-c$
in  $C_1^{-1}$.
\end{enumerate}
For example, a computation by Proposition  \ref{VEL} yields
$$\mathcal{F}_{0,3}^{\mathsf{SQ}}[0,0,3]=
\lan \,\tau_0(H^2)^3\, \ran^{\mathsf{SQ}}_{g,n} = -\frac{1}{3}\left(\frac{C_1}{L}\right)^3 \, .$$
A natural question 
is whether a holomorphic anomaly
equation of the form of Theorem \ref{ttt} holds for 
$\mathcal{F}_{g,n}^{\mathsf{SQ}}[a,b,c]$. The answer is {\em yes}, but
with an additional descendent term.

Insertions of higher powers of $H$ can also be included in the stable
quotient theory of $K\mathbb{P}^2$.
However, because of the specialization of torus weights
\eqref{kk123}, insertions of $H^k$ for $k\geq 3$ can be
reduced to insertions of $1,H,H^2$ via the relation
$$\tau_0(H^r) =\tau_0(H^s) \ \ \ \text{for} \ \ \ r\equiv s \mod 3\, .$$

\subsection{Holomorphic anomaly equation}
Let $\pi$ be a morphism to the moduli space of stable curves
determined by the domain,
$$\pi:\overline{Q}_{g,n}(K\PP^2,d)\rightarrow \overline{M}_{g,n}
\, .$$
Define following the series of stable quotient invariants with
descendents,
\begin{equation*}
\mathcal{F}_{g,n}^{\mathsf{SQ}}[a,b,c,\delta]  = 
\lan \,\tau_0(1)^a \tau_0(H)^b\tau_0(H^2)^c
\, \widetilde{\tau}_1(H)^{\delta}\,  \ran^{\mathsf{SQ}}_{g,n=a+b+c+\delta}\, .
\end{equation*}
The descendent $\widetilde{\tau}_1(H)$ here{\footnote{The tilde
is used to indicate the pulled-back cotangent line.}}
corresponds to the insertion
$$\pi^*(\psi_{i})\cdot \text{ev}_{i}^*(H)$$
with respect to the cotangent line {\em pulled-back}
via $\pi$.


\begin{Thm} \label{ss56}
For $2g-2+n > 0$ and a partition $n=a+b+c$, 

 \begin{eqnarray*}
 \frac{1}{C_1^2}\frac{\partial\mathcal{F}^{\mathsf{SQ}}_{g,n}[a,b,c]}{\partial A_2}&=&\ \
 \frac{1}{2}\sum
 \frac{\partial \mathcal{F}^{\mathsf{SQ}}_{g_1,n_1}[a_1,b_1,c_1]}{\partial T} \, \frac{\partial \mathcal{F}^{\mathsf{SQ}}_{g_2,n_2}[a_2,b_2,c_2]}{\partial T}\\
& & +\frac{1}{2}\frac{\partial^2 \mathcal{F}^{\mathsf{SQ}}_{g-1,n}[a,b,c]}{\partial T^2}\\
& & -\frac{1}{3}c\, \mathcal{F}^{\mathsf{SQ}}_{g,n}[a,b,c-1,1]\, .\\
 \end{eqnarray*}
\end{Thm}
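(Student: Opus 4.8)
The plan is to mimic the proof of Theorem~\ref{ttt} given in Section~\ref{prttt}, carrying along the extra markings with insertions $1$, $H$, $H^2$ and tracking the one new phenomenon: when an edge $f$ carrying an $A_2$-derivative is contracted it may absorb a leg decorated by $H^2$. First I would apply Proposition~\ref{VE} and Proposition~\ref{VEL} to express $\mathcal{F}^{\mathsf{SQ}}_{g,n}[a,b,c]$ as a sum over decorated graphs $\Gamma\in\mathsf{G}_{g,n}(\PP^2)$ with legs decorated by $1$, $H$, $H^2$, and then differentiate with respect to $A_2=\frac{1}{L^3}(3X+1-\frac{L^3}{2})$. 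By Lemma~\ref{L1} the vertex contributions are independent of $X$, and by the leg analysis at the end of Section~\ref{svel} the leg contributions for $H^0$ and $H^1$ insertions are also $X$-free, while an $H^2$ leg contributes a term linear in $X$ (through $C_1 C_2 \cdot \text{Cont}^{\mathsf{A}}_\Gamma(l) \in \CC(\lambda)[L^{\pm1},X]$). So $\frac{\partial}{\partial A_2}$ hits either an edge (as in Section~\ref{prttt}) or an $H^2$-leg.

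The edge terms are handled exactly as before. Deleting the distinguished edge $f$ either keeps $\Gamma$ connected, giving $\Gamma^0_f\in\mathsf{G}_{g-1,n+2}(\PP^2)$, or disconnects it into $\Gamma^1_f$ and $\Gamma^2_f$; the markings and the two insertion-type partitions $a=a_1+a_2$, etc., distribute over the components, and using \eqref{Coeff} together with Proposition~\ref{VEL} one recovers
\begin{multline*}
\left(\frac{L^3}{3C_1^2}\right)\frac{\partial}{\partial X}\lan \tau_0(1)^a\tau_0(H)^b\tau_0(H^2)^c\ran^{\mathsf{SQ}}_{g,n}\Big|_{\text{edge terms}}=\\
\frac{1}{2}\sum \lan H,\ldots\ran^{\mathsf{SQ}}_{g_1,n_1+1}\lan H,\ldots\ran^{\mathsf{SQ}}_{g_2,n_2+1}+\frac{1}{2}\lan H,H,\ldots\ran^{\mathsf{SQ}}_{g-1,n+2}\, ,
\end{multline*}
which after the divisor-equation and wall-crossing arguments from Section~\ref{prttt} becomes the first two terms on the right side of Theorem~\ref{ss56}. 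The genuinely new contribution comes from $\frac{\partial}{\partial X}$ acting on the $X$-linear part of an $H^2$-leg contribution: I would compute, from the asymptotic expansion \eqref{VS} of $\overline{\mathds{S}}_i(H^2)$ and Lemma~\ref{RPoly2} (where $R_{2k}=Q_{2k}-\frac{R_{1\,k-1}}{L}X$), that differentiating an $H^2$-leg in $X$ converts it, up to the factor $-\frac{1}{3}$ coming from $-3\lambda_i$ and the normalization of $C_1,C_2$, precisely into the pulled-back descendent leg $\widetilde{\tau}_1(H)$ appearing in $\mathcal{F}^{\mathsf{SQ}}_{g,n}[a,b,c-1,1]$. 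Summing over the $c$ choices of which $H^2$-leg is hit produces the term $-\frac{1}{3}c\,\mathcal{F}^{\mathsf{SQ}}_{g,n}[a,b,c-1,1]$.

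Finally I would invoke the lifting argument of Section~\ref{llfftt} verbatim (now with spectator insertion legs of types $1$, $H$, $H^2$ which do not interact with the vertex-redistribution/edge-cancellation mechanism) to replace $\lan H,\ldots\ran^{\mathsf{SQ}}$ by the corresponding $\frac{\partial}{\partial T}$-derivatives in the ring $\CC[L^{\pm1}][A_2,C_1,C_1^{-1}]$, which is legitimate since properties (i)--(iv) for $\mathcal{F}^{\mathsf{SQ}}_{g,n}[a,b,c]$ were already established. The main obstacle I anticipate is pinning down the exact constant $-\frac{1}{3}$ and verifying that differentiating the $H^2$-leg really yields the pulled-back (rather than intrinsic) cotangent-line descendent $\widetilde{\tau}_1(H)$: this requires a careful bookkeeping of the Birkhoff-factorization normalizations in \eqref{S1}, the relations \eqref{c1c2l}, and the identification of the $X$-coefficient of $\overline{\mathds{S}}_i(H^2)$ with a genus-$g$ insertion of $\pi^*\psi\cdot\mathrm{ev}^*(H)$ via the string/dilaton manipulations of Section~\ref{intmg}. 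Everything else is a routine, if lengthy, adaptation of Section~\ref{hafp}.
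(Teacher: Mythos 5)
Your proposal follows essentially the same route as the paper: the $A_2$-derivative hits either an edge (handled exactly as in Theorem \ref{ttt}) or the $X$-linear part of an $H^2$-leg, and the paper verifies your anticipated identification by comparing the $X$-derivative of the $H^2$-leg contribution (via Lemma \ref{RPoly2} and the expansion \eqref{VS}) with the $\pi^*(\psi)\mathrm{ev}^*(H)$-leg contribution in an extended localization formula, the constant $-\tfrac{1}{3}$ coming precisely from $L^3=C_1^2C_2$ as you suggested. The only cosmetic difference is that the paper packages the descendent leg into an explicit Proposition (\ref{VELD}) before making the leg-by-leg comparison; otherwise the argument, including the summation over the $c$ choices of $H^2$-leg and the reuse of the lifting argument, is the same.
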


The sum in the first term on the right  is over all
genus decompositions
$$g_1+g_2=g$$
and all distributions of the $n$ markings to the
two parts. The point distributions determines
decompositions
$$ n_1+n_2=n\,, \ \ 
a_1+a_2=a\,, \ \ b_1+b_2=b\,, \ \ c_1+c_2=c\,.$$
In fact, each such decomposition occurs
$$\binom{a}{a_1,a_2}\binom{b}{b_1,b_2} \binom{c}{c_1,c_2}$$
times in the sum.
The distributions are required to satisfy 
$$2g_1-2+n_1\geq 0\, , \ \ \ 2g_2-2+n_2\geq 0\, .$$
The unstable genus 0 cases in the sum
are defined by
\begin{eqnarray*}
\frac{\partial\mathcal{F}_{0,2}^{\mathsf{SQ}}(H^r,H^s)}{\partial T}&=& \lan \, H^r,H^s,H\,  \ran_{0,3}^{\mathsf{SQ}}\, ,
\end{eqnarray*}
and the unstable genus 1 cases are defined
as in Section \ref{prttt}.

In the second term on the right, further unstable terms
in genus 0 and 1 may appear. In genus 0, the definitions
are
\begin{eqnarray*}
\frac{\partial^2\mathcal{F}_{0,2}^{\mathsf{SQ}}(H^r,H^s)}{\partial T^2}&=&  \lan\, H^r,H^s,H,H\,  \ran_{0,4}^{\mathsf{SQ}}\, ,\\
\frac{\partial^2\mathcal{F}_{0,1}^{\mathsf{SQ}}(H^r)}{\partial T^2}&=&  \lan\, H^r,H,H\,  \ran_{0,3}^{\mathsf{SQ}}\, .
\end{eqnarray*}
In genus 1, the unstable cases are defined again
as in Section \ref{prttt}.
Together, the first two terms on right 
exactly match the holomorphic anomaly equation of Theorem \ref{ttt} for $K\mathbb{P}^2$ without
insertions.

The inclusion and precise form of the new 
third term was 
motivated by the recent work 
of Oberdieck and Pixton \cite{ObPix} on the
holomorphic anomaly equation for the elliptic curve $E$.
While our theory of $K\mathbb{P}^2$ and the
theory of $E$ appear to have little in common, at least
two features are parallel:
{\em both} targets are Calabi-Yau and
{\em both} theories admit nontrivial insertions{\footnote{The moduli spaces of maps to $E$ have {\em positive} virtual dimension.}}.
Oberdieck and Pixton prove a holomorphic anomaly
equation (at the cycle level) for the elliptic curve where
the differentiation on the left side is with respect to the Eisenstein series
$E_2$ (instead of $A_2$ here for $K\mathbb{P}^2$). The equation
of Oberdieck and Pixton has a third term exactly involving 
a single pulled-back descendent.{\footnote{After attending
Pixton's lecture at the Institute Henri Poincar\'e in
Paris in March 2017, we realized the
parallel term is correct for our $K\mathbb{P}^2$ theory.}}

\subsection{Proof of Theorem \ref{ss56}} The localization formula of Proposition \ref{VEL} can be easily
extended to include the new descendent insertion.

\begin{Prop}\label{VELD} We have
 \begin{multline*}
 \text{\em Cont}_\Gamma(H^{k_1},\ldots,H^{k_n},\pi^*(\psi_{n+1})H)
     =\\\frac{1}{|\text{\em Aut}(\Gamma)|}
     \sum_{\mathsf{A} \in \ZZ_{\ge 0}^{\mathsf{F}}} \prod_{v\in \mathsf{V}} 
     \text{\em Cont}^{\mathsf{A}}_\Gamma (v)
     \prod_{e\in \mathsf{E}} \text{\em Cont}^{\mathsf{A}}_\Gamma(e)
     \prod_{l\in \mathsf{L}} \text{\em Cont}^{\mathsf{A}}_\Gamma(l)\, ,
 \end{multline*}
 where the leg contribution 
 is 
 \begin{eqnarray*}
     \text{\em Cont}^{\mathsf{A}}_\Gamma(l)
    &=&
    (-1)^{\mathsf{A}(l)-1}\left[e^{-\frac{\lann1,1\rann^{\mathsf{p}(l),0+}_{0,2}}{z}}
       \overline{\mathds{S}}_{\mathsf{p}(l)}(H^{k_l})\right]_{z^{\mathsf{A}(l)-1}}\, 
 \end{eqnarray*}
 for $l \in \{1,\ldots,n\}$ and
 
 \begin{eqnarray*}
     \text{\em Cont}^{\mathsf{A}}_\Gamma(l)
    &=&
    (-1)^{\mathsf{A}(l)-1}\left[e^{-\frac{\lann1,1\rann^{\mathsf{p}(l),0+}_{0,2}}{z}}
       z \overline{\mathds{S}}_{\mathsf{p}(l)}(H)\right]_{z^{\mathsf{A}(l)-1}}\,
 \end{eqnarray*}
 for $l=n+1$.
The vertex and edge contributions are same as before.
\end{Prop}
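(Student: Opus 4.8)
The plan is to run the localization argument behind Proposition \ref{VEL} essentially verbatim, since inserting a single pulled-back descendent $\pi^*(\psi_{n+1})$ modifies only the contribution of the corresponding leg and leaves the vertex and edge analysis of Sections \ref{hgs}--\ref{svel} untouched. First I would recall the structure of that proof: for each decorated graph $\Gamma$, the $\T$-fixed loci indexed by $\Gamma$ decompose as products over vertices of moduli $\overline{M}_{\mathsf{g}(v),\mathrm{val}(v)}$ glued along edges; the insertion $H^{k_l}$ at an ordinary leg is repackaged into the one-pointed series $\overline{\mathds{S}}_{\mathsf{p}(l)}(H^{k_l})$ via the graph-space and $I$-function formalism of Section \ref{bcbcbc}; and the coefficient extraction $[\,\cdot\,]_{z^{\mathsf{A}(l)-1}}$ records the pairing of the node smoothing parameter $z=c_1(T_0\PP^1)$ against the vertex flag class $\psi^{\mathsf{A}(l)-1}$. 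Since all of this is already in place, the only new ingredient is the effect of the extra factor $\pi^*(\psi_{n+1})$.

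The key step is to identify $\pi^*(\psi_{n+1})$, on the $\CC^*$-fixed loci $\F_{k,d}$ entering the leg, with the localization variable $z$ attached to that leg, so that the leg integrand is simply multiplied by $z$ before the coefficient of $z^{\mathsf{A}(n+1)-1}$ is extracted. Because $\psi_{n+1}$ is pulled back along the stabilization morphism $\pi\colon \overline{Q}_{g,n}(K\PP^2,d)\to\overline{M}_{g,n}$, its restriction to a fixed locus depends only on the stabilized domain: the genus-$0$ rational tail carrying the marking over $0\in\PP^1$ is unstable and is contracted by $\pi$, so the marking falls to the node attaching that tail to the vertex and $\pi^*(\psi_{n+1})$ becomes the cotangent line at this point, which is precisely the parameter $z$ of the distinguished component in the quasimap graph space. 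Replacing $\overline{\mathds{S}}_{\mathsf{p}(n+1)}(H)$ by $z\,\overline{\mathds{S}}_{\mathsf{p}(n+1)}(H)$ in the leg formula of Proposition \ref{VEL}, while leaving $\text{Cont}^{\mathsf{A}}_\Gamma(v)$ and $\text{Cont}^{\mathsf{A}}_\Gamma(e)$ unchanged, then yields exactly the asserted formula; note that the resulting $\mathsf{A}(n+1)=1$ term automatically vanishes because $z\,\overline{\mathds{S}}_{\mathsf{p}(n+1)}(H)$ has no constant term in $z$, consistent with the unchanged sign $(-1)^{\mathsf{A}(n+1)-1}$.

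The hard part will be making the identification $\pi^*(\psi_{n+1})=z$ rigorous on every stratum entering the leg, and in particular tracking the sign conventions: since $z=c_1(T_0\PP^1)$ is a \emph{tangent} line class while $\psi_{n+1}$ is a \emph{cotangent} line, the comparison of the pulled-back cotangent line at the contracted-tail node with the vertex-side node class must be carried out carefully so that it contributes the single factor $z$ with the sign matching the normalization of \eqref{VS} and the exponential prefactor $e^{-\lann 1,1\rann^{\mathsf{p}(n+1),0+}_{0,2}/z}$. One must also confirm that on strata where the marking sits on a positive-degree chain no further correction terms arise beyond this one factor. Once this local comparison is settled, the remainder is identical to the proof of Proposition \ref{VEL}: summing over all weight assignments $\mathsf{w}$ and applying Lemma \ref{Edge} reassembles the vertex, edge, and modified leg factors into the stated graph-sum, with the vertex and edge contributions literally unchanged.
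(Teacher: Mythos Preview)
Your overall strategy is right and matches the paper, which simply asserts that the localization formula of Proposition \ref{VEL} ``can be easily extended to include the new descendent insertion.'' The conclusion---that the only change is multiplying the leg series for marking $n+1$ by a single factor of $z$---is exactly what is needed.

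However, your geometric justification for that factor of $z$ is muddled. On a $\T$-fixed locus of $\overline{Q}_{g,n+1}(K\PP^2,d)$ indexed by $\Gamma$, the marking $n+1$ does \emph{not} sit on an unstable rational tail: as a weight-$1$ marking it is assigned by $\mathsf{N}$ to a vertex $v$ and lies on the stable contracted component of genus $\mathsf{g}(v)$. The quasimap graph space and its tangent weight $z$ at $0\in\PP^1$ enter only in the \emph{definition} of $\overline{\mathds{S}}$, not in the fixed-locus description here. The correct mechanism is simpler: since the vertex component is stable and survives the domain stabilization $\pi$, the class $\pi^*(\psi_{n+1})$ restricts on the fixed locus to the cotangent line at the marking on the vertex moduli---precisely the $\psi$-class already appearing at that leg-flag in $\text{Cont}^{\mathsf{A}}_\Gamma(v)$. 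Thus the insertion $\pi^*(\psi_{n+1})\cdot\text{ev}_{n+1}^*(H)$ raises the $\psi$-power at that flag by one; after the same re-indexing of the $\mathsf{A}$-sum used in the vertex-leg pairing, this is equivalent to leaving $\text{Cont}^{\mathsf{A}}_\Gamma(v)$ unchanged and replacing $\overline{\mathds{S}}_{\mathsf{p}(n+1)}(H)$ by $z\,\overline{\mathds{S}}_{\mathsf{p}(n+1)}(H)$ in the leg. Your worry about ``strata where the marking sits on a positive-degree chain'' is therefore vacuous: no such strata occur for an ordinary marking. The sign bookkeeping you flag does require care under the shift $a\mapsto a+1$, but it is governed by the same $(-1)^{a-1}$ convention already fixed in Proposition \ref{VEL}.
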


Let $l$ be a leg of $\Gamma$ with insertion 
$H^2$. The $X$ derivative of
the leg contribution of $l$  is
\begin{align*}
    \frac{\partial \text{Cont}^\mathsf{A}_\Gamma(l)}{\partial X}=& \left\{ \begin{array}{rl} (-1)^{a}\frac{L}{C_1C_2}\frac{R_{1\, a-2}}{\lambda_{\mathsf{p}(l)}^{a-3} } & \text{if } a \ge 2  \\
                                    0 & \text{if } a = 1  \, , \end{array}\right.
\end{align*}
where $a=\mathsf{A}(l)$.
Let $\widetilde{l}$ be a leg of $\Gamma$ with insertion 
$\pi^*(\psi)H$. The $X$ derivative of
the leg contribution of $\widetilde{l}$  is 
\begin{align*}                      
    \text{Cont}^\mathsf{A}_\Gamma(\, \widetilde{l}\, )=\left\{ \begin{array}{rl} (-1)^{a+1}\frac{L}{C_1}\frac{R_{1\,a-2}}
    {\lambda_{\mathsf{p}(\, \widetilde{l}\, )}^{a-3}} & \text{if } a \ge 2  \\
                                    0 & \text{if } a = 1  \, , \end{array}\right.
    \end{align*}
    where $a=\mathsf{A}(\, \widetilde{l}\, )$.
Hence, when 
$\mathsf{p}(l)=\mathsf{p}(\, \widetilde{l}\, )$ and
$\mathsf{A}(l)= \mathsf{A}(\, \widetilde{l}\,)$,
we obtain the equation{\footnote{We have used the
identity $L^3=C_1^2C_2$ obtained from \eqref{c1c2l}.}}
\begin{equation*}
 \frac{1}{C_1^2}\frac{\partial \text{Cont}^\mathsf{A}_\Gamma(l)}{\partial A_2} =
    \frac{L^3}{3C_1^2}\frac{\partial \text{Cont}^\mathsf{A}_\Gamma(l)}{\partial X} = -\frac{1}{3}\text{Cont}^\mathsf{A}_\Gamma(\, \widetilde{l}\,)\, ,
\end{equation*}
which explains the third term on the right side of 
holomorphic anomaly equation of Theorem \ref{ss56}. 
 The proof Theorem \ref{ss56} then follows by exactly the same argument used for the proof Theorem \ref{ttt}.  \qed
 \vspace{8pt}

 In fact, the same proof yields a general holomorphic anomaly equation
 for all series including the insertion $\widetilde{\tau}_1(H)$,
 
 \begin{eqnarray*}
 \frac{1}{C_1^2}\frac{\partial\mathcal{F}^{\mathsf{SQ}}_{g,n}[a,b,c,\delta]}{\partial A_2}&=&\ \
 \frac{1}{2}\sum
 \frac{\partial \mathcal{F}^{\mathsf{SQ}}_{g_1,n_1}[a_1,b_1,c_1,\delta_1]}{\partial T} \, \frac{\partial \mathcal{F}^{\mathsf{SQ}}_{g_2,n_2}[a_2,b_2,c_2,\delta_2]}{\partial T}\\
& & +\frac{1}{2}\frac{\partial^2 \mathcal{F}^{\mathsf{SQ}}_{g-1,n}[a,b,c,\delta]}{\partial T^2}\\
& & -\frac{1}{3}c\, \mathcal{F}^{\mathsf{SQ}}_{g,n}[a,b,c-1,\delta+1]\, .\\
 \end{eqnarray*}

\end{document}